\documentclass[12pt, reqno, a4paper]{amsart}
\setcounter{secnumdepth}{2}
\setcounter{tocdepth}{2}
\usepackage{txfonts}	
\usepackage{amsmath, amssymb, amsthm, amscd}
\usepackage[T1]{fontenc}
\usepackage{eucal}
\usepackage[dvips]{color}
\usepackage{multicol}
\usepackage[all]{xy}		
\usepackage{graphicx}
\usepackage{color}
\usepackage{colordvi}
\usepackage{xspace}
\usepackage{tikz}
\usepackage{enumerate}
\usepackage{ulem} 
\usepackage[colorlinks,final,backref=page,hyperindex]{hyperref}
\usepackage{bm}
\usepackage{cancel}

\setlength\topmargin{-.8cm}
\setlength\textheight{22.8cm}
\setlength\oddsidemargin{0cm}
\setlength\evensidemargin{-0cm}
\setlength\textwidth{16.3cm}


\allowdisplaybreaks 

\newcommand {\emptycomment}[1]{} 

\newcommand{\delete}[1]{}

\newif\ifflabel\flabelfalse
\ifflabel
\else
	
\fi

\newtheorem{theorem}{Theorem}[section]
\newtheorem{lemma}[theorem]{Lemma}
\newtheorem{corollary}[theorem]{Corollary}
\newtheorem{proposition}[theorem]{Proposition}
\theoremstyle{definition}
\newtheorem{definition}[theorem]{Definition}
\newtheorem{example}[theorem]{Example}


\newcommand{\End}{\mathrm{End}}
\newcommand{\id}{\mathrm{id}}

\newcommand{\ad}{\mathrm{ad}}




\begin{document}

\title{Quasi-triangular and factorizable Poisson bialgebras}

\author{Yuanchang Lin}
\address{School of Mathematics, North University of China, Taiyuan 030051, China}
\email{linyuanchang@mail.nankai.edu.cn}

\author{Dilei Lu}
\address{School of Science, Beijing Information Science and Technology University, Beijing 100192, China}
\email{ludyray@bistu.edu.cn}

\subjclass[2010]{16T10, 16T25, 16W99, 17A30, 17B62, 57R56, 81R60}

\keywords{Quasi-triangular Poisson bialgebras; factorizable Poisson bialgebras; quadratic Rota-Baxter Poisson algebras;  differential antisymmetric infinitesimal  bialgebras}

\date{\today}

\begin{abstract}
In this paper, we introduce the notions of quasi-triangular and factorizable Poisson bialgebras. 
A factorizable Poisson bialgebra induces a factorization of the underlying Poisson algebra. 
We prove that the Drinfeld classical double of a Poisson bialgebra naturally admits a factorizable Poisson bialgebra structure. 
Furthermore, we introduce the notion of quadratic Rota-Baxter Poisson algebras and show that a quadratic Rota-Baxter Poisson algebra of zero weight induces a triangular  Poisson bialgebra. 
Moreover, we establish a one-to-one correspondence between factorizable Poisson bialgebras and quadratic Rota-Baxter Poisson algebras of nonzero weights. 
Finally, we establish the quasi-triangular and factorizable theories for differential antisymmetric infinitesimal (ASI)    bialgebras, and construct quasi-triangular and factorizable Poisson bialgebras from quasi-triangular and factorizable (commutative and cocommutative) differential ASI bialgebras respectively.
\end{abstract}

\maketitle

\tableofcontents

\section{Introduction}
Poisson algebras serve as fundamental structures in various areas of mathematics and mathematical physics, including Poisson geometry \cite{vaisman2012lectures, weinstein1977lectures}, classical and quantum mechanics \cite{arnol2013mathematical, dirac2013lectures, odzijewicz2011hamiltonian}, algebraic geometry \cite{ginzburg2004poisson, polishchuk1997algebraic}, quantization theory \cite{huebschmann1990poisson, kontsevich2003deformation} and quantum groups \cite{chari1995guide, drinfeld1986quantum}.
A Poisson algebra is both a Lie algebra and a commutative associative algebra which are compatible in a certain sense.
\begin{definition}
	{\rm (\cite{lichnerowicz1977varietes, weinstein1977lectures})}
	A \textbf{Poisson algebra} is a triple $(A, [\ , \ ], \cdot)$, 
	where $(A, [\ ,\ ])$ is a Lie algebra and $(A, \cdot)$ is a commutative associative algebra satisfying the following equation:
	\begin{equation*}
		[a, b \cdot c] = [a, b] \cdot c + b \cdot [a, c], \;\;
		\forall a, b, c \in A.
	\end{equation*}
\end{definition}

Both Lie algebras and associative algebras possess well-developed theories of bialgebras, which have found extensive applications in various mathematical fields. 
In the Lie algebra setting, Lie bialgebras, introduced by Drinfeld \cite{drinfeld1983hamiltonian}, play a fundamental role as the infinitesimalisation of Poisson Lie groups \cite{chari1995guide, kassel2012quantum}.
On the side of associative algebras, the notion of infinitesimal bialgebras was introduced by Joni and Rota to provide an algebraic framework for the calculus of divided differences \cite{joni1979coalgebras}. 
Variants of this notion, such as balanced infinitesimal bialgebras (also termed antisymmetric infinitesimal bialgebras   or associative D-bialgebras in different contexts \cite{aguiar2001associative, bai2010double, zhelyabin1997jordan}), have been systematically studied by Aguiar \cite{aguiar2000infinitesimal, aguiar2001associative}, who established their close analogy with Lie bialgebras. 
These structures have  been found significant applications in combinatorial mathematics.

Building on these ideas, a unified bialgebra theory for Poisson algebras, called Poisson bialgebras, was later developed in \cite{ni2013poisson}, combining aspects of both Lie and  infinitesimal bialgebras.

Meanwhile, in the realm of Lie bialgebras, quasi-triangular Lie bialgebra structures have been pivotal objects in mathematical physics \cite{drinfeld1986quantum, semenov1983classical}. 
Among these quasi-triangular Lie bialgebra structures, factorizable Lie bialgebras constitute a particularly important subclass, linking classical 
$r$-matrices to factorization problems and playing a key role in integrable systems \cite{bai2010nonabelian, reshetikhin1988quantum, semenov2003integrable}. 
Recently, quasi-triangular and factorizable theories has been extended to antisymmetric infinitesimal bialgebras with \cite{sheng2023quasi} introducing quasi-triangular and factorizable antisymmetric infinitesimal bialgebras.

Naturally, we try to establish the quasi-triangular and factorizable theories in the context of Poisson bialgebras, synthesizing concepts from both quasi-triangular Lie bialgebras and quasi-triangular antisymmetric infinitesimal bialgebras.
More precisely, we introduce the notion of quasi-triangular Poisson bialgebras based on the $(\ad, L)$-invariant condition.
In particular, if the symmetric part of the solution of the Poisson Yang-Baxter equation in a quasi-triangular Poisson bialgebra is nondegenerate, then a factorizable Poisson bialgebra is obtained.
We prove that every factorizable Poisson bialgebra induces a factorization of its underlying Poisson algebra.
Furthermore, we establish that the Drinfeld classical double of a Poisson bialgebra is automatically endowed with a canonical factorizable Poisson bialgebra structure.

Recent results have shown that factorizable Lie bialgebras and factorizable antisymmetric infinitesimal bialgebras could be characterized by quadratic Rota-Baxter Lie algebras of nonzero weights and symmetric Rota-Baxter Frobenius algebras of nonzero weights respectively (\cite{lang2023factorizable, sheng2023quasi}).
This motivates our investigation of analogous Rota-Baxter characterizations of factorizable Poisson bialgebras.
For this purpose, we introduce the notion of a quadratic Rota-Baxter Poisson algebra by equipping a quadratic Poisson algebra with a Rota-Baxter operator satisfying a compatibility condition. We show that a quadratic Rota-Baxter Poisson algebra of zero weight can give rise to a triangular Poisson bialgebra. Moreover, we establish a one-to-one correspondence between factorizable Poisson bialgebras and quadratic Rota-Baxter Poisson algebras of nonzero weights.

Building on the fact that a Poisson algebra can be obtained from a commutative differential algebra with two commuting derivations (\cite{bhaskara1988poisson}), \cite{lin2023differential} extends such a connection to the context of bialgebras, utilizing the theory of differential antisymmetric infinitesimal (ASI) bialgebras to construct Poisson bialgebras from commutative and cocommutative differential ASI bialgebras.
In this paper, we further investigate this relationship in greater depth. 
Specifically, we develop the theories of quasi-triangular and factorizable differential  ASI   bialgebras, and apply them to the study of their Poisson bialgebra counterparts. 
We establish the constructions of quasi-triangular and factorizable Poisson bialgebras from quasi-triangular and factorizable (commutative and cocommutative) differential  ASI  bialgebras respectively.

The paper is organized as follows. 
In Section~\ref{sec:qtpb}, we introduce the notion of quasi-triangular (triangular) Poisson bialgebras as a special class of coboundary Poisson bialgebras. 
Section~\ref{sec:fpb} presents the concept of factorizable Poisson bialgebras, which is a distinguished subclass of quasi-triangular Poisson bialgebras.
We demonstrate that a factorizable Poisson bialgebra induces a factorization of the underlying Poisson algebra.
Furthermore, we prove that the Drinfeld classical double of a Poisson bialgebra is naturally endowed with a factorizable Poisson bialgebra structure.
Section~\ref{sec:rbcfpb} establishes the Rota-Baxter characterization of factorizable Poisson bialgebras. 
We introduce the notion of quadratic Rota-Baxter Poisson algebras and establish a one-to-one correspondence between quadratic Rota-Baxter Poisson algebras of nonzero weights and factorizable Poisson bialgebras.
Moreover, we show that a quadratic Rota-Baxter Poisson algebra of zero weight can give rise to a triangular Poisson bialgebra.
In Section~\ref{sec:qpbfqdasi}, we introduce the notions of quasi-triangular and factorizable differential ASI bialgebras,  and give Rota-Baxter characterization of factorizable differential ASI bialgebras. 
The constructions of quasi-triangular and factorizable Poisson bialgebras from quasi-triangular and factorizable (commutative and cocommutative) differential ASI bialgebras are illustrated respectively.

Throughout this paper, we work over a base field $\mathbb{K}$  of characteristic $0$, and all vector spaces and algebras are assumed to be finite-dimensional. 
We adopt the following conventions and  notations.
\begin{enumerate}
	\item 
	Let $(A, \diamond)$ be a vector space equipped with a bilinear operation $\diamond: A \otimes A \to A$.
	Let $L_\diamond(a)$ and $R_\diamond(a)$ denote the left and right multiplication operator respectively, that is 
	\begin{equation*}
		L_\diamond(a)b = R_\diamond(b)a = a \diamond b, \;\; \forall a, b \in A.
	\end{equation*}
	We also simply denote them by $L(a)$ and $R(a)$ respectively without confusion.
	In particular, if $(A, [\ ,\ ])$ is a Lie algebra, we let $\ad_{[,]}(a) = \ad(a)$ denote the adjoint operator, that is 
	\begin{equation*}
		\ad_{[,]}(a)b = \ad(a) b = [a, b], \;\; \forall a, b \in A.
	\end{equation*}
	
	\item 
	Let  $V$ be a vector space. 
	Denote the flip operator by $\tau: V\otimes V\rightarrow V\otimes V$, which is defined by 
	\begin{equation*}
		\tau(u\otimes v) = v \otimes u, \;\; \forall u, v \in V.
	\end{equation*}
	
	\item 
	Let $(A, \diamond)$ be a vector space equipped with a bilinear operation $\diamond: A \otimes A \to A$.
	Let $r = \sum_{i} a_i \otimes b_i \in A \otimes A$.
	Set 
	\begin{equation*}
		r_{12} = \sum_{i} a_i \otimes b_i \otimes 1, \;\;
		r_{13} = \sum_{i} a_i \otimes 1 \otimes b_i, \;\;
		r_{23} = \sum_{i} 1 \otimes a_i \otimes b_i,
	\end{equation*}
	where $1$ is the unit if $(A, \diamond)$ is unital or a symbol playing a similar role as the unit for the non-unital cases.
	Furthermore, the notations $r_{12} \diamond r_{13}, r_{13} \diamond r_{23}, r_{23} \diamond r_{12}$ are defined by 
	\begin{align*}
		&r_{12} \diamond r_{13} = \sum_{i, j} a_i \diamond a_j \otimes b_i \otimes b_j, \;\;
		r_{13} \diamond r_{23} = \sum_{i,j} a_i \otimes a_j \otimes b_i \diamond b_j, \\
		&r_{23} \diamond r_{12} = \sum_{i,j} a_i \otimes a_j \diamond b_i \otimes b_j, \;\; r_{12} \diamond r_{23} = \sum_{i,j} a_i \otimes b_i \diamond a_j \otimes b_j.
	\end{align*}
	\item 
	Denote the standard pairing between the dual space $V^*$ and $V$ by 
	\begin{equation*}
		\langle \ ,\ \rangle: V^* \times V \to \mathbb{K}, \;\; \langle f, v \rangle := f(v), \;\; \forall f \in V^*, \; v \in V.
	\end{equation*} 
	
	\item 
	Let $V, W$ be two vector spaces and $T: V \to W$ be a linear map.
	Denote the dual map by $T^*: W^* \to V^*$, which is defined by 
	\begin{equation*}
		\langle v, T^*(w^*)\rangle = \langle T(v), w^*\rangle, \;\; \forall v \in V, w^* \in W^*.
	\end{equation*}

	\item 
	Let $A,V$ be  vector spaces.
	For a linear map $\mu: A \to \End(V)$, define a linear map $\mu^*: A \to \End(V^*)$ by 
	\begin{equation*}
		\langle \mu^*(a)v^*, u\rangle = \langle v^*, \mu(a) u\rangle, \;\; \forall a \in A, u \in V, v^* \in V^*,
	\end{equation*}
	that is, $\mu^*(a) = (\mu(a))^*$.
	
	\item 
	Let $\Pi_1 = \{\alpha_k: V_1 \to V_1\}_{k=1}^m$ and $\Pi_2 = \{\beta_k: V_2 \to V_2\}_{k=1}^m$ be two sets of commuting linear maps,
	then obviously $\{\alpha_k + \beta_k\}_{k=1}^m$ is still a set of commuting linear maps, which is denoted by $\Pi_1 + \Pi_2$.
	
\end{enumerate}

\section{Quasi-triangular Poisson bialgebras}\label{sec:qtpb}
In this section, we recall the notion of coboundary Poisson bialgebras and introduce the notion of quasi-triangular Poisson bialgebras as a special case of the coboundary Poisson bialgebras, based on the notion of $(\ad, L)$-invariance. The notion of a quadratic Poisson algebra is also introduced, which gives rise to a $(\ad, L)$-invariant 2-tensor and serves as the foundation for the subsequent notion of a quadratic Rota-Baxter Poisson algebra.

\begin{definition}(\cite{ni2013poisson})
	Let $(A, [\ ,\ ]_A, \cdot_A)$ be a Poisson algebra, $V$ be a vector space and $\rho, \mu: A \to \End (V)$ be two linear maps.
	Then the triple $(V, \rho, \mu)$ is called a \textbf{representation} of the Poisson algebra $(A, [\ ,\ ]_A, \cdot_A)$
	if the following conditions hold:
	\begin{enumerate}
		\item $(V, \rho)$ is a representation of the Lie algebra $(A, [\ ,\ ]_A)$, that is, 
		$\rho([a,b]_A) = \rho(a)\rho(b) - \rho(b)\rho(a)$ for all $a, b \in A$.
		\item $(V, \mu)$ is a representation of $(A, \cdot_A)$, that is, 
		$\mu(a\cdot_A b) = \mu(a)\mu(b)$ for all $a, b \in A$.
		\item 
		The following equations hold:
		\begin{align*}
			\rho(a \cdot_A b) &= \mu(b) \rho(a) + \mu(a) \rho(b),  \\
			\mu([a, b]_A) &= \rho(a) \mu(b) - \mu(b) \rho(a), \;\; \forall a,b\in A. 
		\end{align*}	
	\end{enumerate}
\end{definition}

\begin{proposition}
	{\rm (\cite{ni2013poisson}) }
	Let $(A, [\ ,\ ]_A, \cdot_A)$ be a Poisson algebra, $V$ be a vector space and $\rho, \mu: A \to \End (V)$ be two linear maps. 
	Then $(V,\rho, \mu)$ is a representation
	of $(A, [\ ,\ ]_A, \cdot_A)$ if and only if there is a Poisson algebra
	structure on $A\oplus V$ with the bilinear operations $[\ ,\ ]$ and $\cdot$ respectively defined by
	\begin{align*}
		\left[a_{1}+v_{1}, a_{2}+v_{2}\right] &=
		[a_{1},a_{2}]_A+\rho(a_1)v_2 - \rho(a_2)v_1, \\
		\left(a_{1}+v_{1}\right) \cdot \left(a_{2}+v_{2}\right) &= a_{1} \cdot_A a_{2}+\mu\left(a_{1}\right) v_{2}+\mu\left(a_{2}\right) v_{1},\; \forall a_{1},
		a_{2} \in A, v_{1}, v_{2} \in V.
	\end{align*}
	In this case, the resulting Poisson algebra
	structure on $A\oplus V$ is denoted by $(A \ltimes_{\rho,\mu}V, [\ ,\ ], \cdot)$ and called the \textbf{semi-direct product Poisson algebra} by $(A, [\ ,\ ]_A, \cdot_A)$ and $(V, \rho, \mu)$.
\end{proposition}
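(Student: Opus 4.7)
The plan is to split the biconditional along the three clauses in the definition of a Poisson-algebra representation and verify each correspondence independently. Writing elements of $A \oplus V$ as $a + v$ with $a \in A$, $v \in V$, I would check in turn that (i) the bracket $[\ ,\ ]$ on $A \oplus V$ satisfies antisymmetry and Jacobi iff $(V, \rho)$ is a representation of the Lie algebra $(A, [\ ,\ ]_A)$, (ii) the product $\cdot$ on $A \oplus V$ is commutative and associative iff $(V, \mu)$ is a representation of the commutative algebra $(A, \cdot_A)$, and (iii) the Leibniz compatibility $[x, y \cdot z] = [x, y] \cdot z + y \cdot [x, z]$ holds on $A \oplus V$ iff the two mixed identities $\rho(a \cdot_A b) = \mu(b)\rho(a) + \mu(a)\rho(b)$ and $\mu([a,b]_A) = \rho(a)\mu(b) - \mu(b)\rho(a)$ hold.

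For (i) and (ii), the arguments are the standard semi-direct-product computations: for example, expanding $[[a_1 + v_1, a_2 + v_2], a_3 + v_3]$ and summing cyclically, the $A$-components vanish by the Jacobi identity of $(A, [\ ,\ ]_A)$, while the $V$-components collapse to the identity $\rho([a_1,a_2]_A) = \rho(a_1)\rho(b) - \rho(b)\rho(a_1)$ applied to the $v_i$, which is precisely the Lie representation condition. The analogous expansion of $((a_1 + v_1) \cdot (a_2 + v_2)) \cdot (a_3 + v_3)$ and its associativity partner yields the associative-representation condition for $\mu$. Commutativity of $\cdot$ on $A \oplus V$ and antisymmetry of $[\ ,\ ]$ are immediate from the symmetry of the defining formulas.

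For (iii), I would expand both sides of the Leibniz identity with $x = a_1 + v_1$, $y = a_2 + v_2$, $z = a_3 + v_3$. The $A$-component equality reduces to the Leibniz identity already satisfied by $(A, [\ ,\ ]_A, \cdot_A)$. In the $V$-component, the left-hand side produces terms $\rho(a_1)\mu(a_2)v_3 + \rho(a_1)\mu(a_3)v_2 - \mu(a_2)\rho(a_3)v_1 - \mu(a_3)\rho(a_2)v_1$ coming from bracketing against the product, plus a term $-\rho(a_2 \cdot_A a_3)v_1$; the right-hand side produces $\mu(a_3)\rho(a_1)v_2 - \mu(a_3)\rho(a_2)v_1 + \mu([a_1,a_2]_A)v_3 + \mu(a_2)\rho(a_1)v_3 - \mu(a_2)\rho(a_3)v_1 - \mu(a_2 \cdot_A a_3)\big|_{\text{on }v_1}$-type terms. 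Collecting by the free variables $v_1, v_2, v_3$ and using multilinearity, the coefficient of $v_1$ yields exactly $\rho(a_2 \cdot_A a_3) = \mu(a_3)\rho(a_2) + \mu(a_2)\rho(a_3)$, and the coefficients of $v_2$ and $v_3$ yield $\mu([a_1,a_2]_A) = \rho(a_1)\mu(a_2) - \mu(a_2)\rho(a_1)$. The converse direction is read off by specializing to elements supported entirely in $A$ or with one variable in $V$.

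The main obstacle is bookkeeping in step (iii): several terms of the same shape appear with different sign conventions, and one must carefully separate the $A$-valued contributions (handled by the Poisson axioms on $A$) from the $V$-valued ones before isolating coefficients. Once the expansion is organized by which $v_i$ each term acts on, the equivalence with the two mixed compatibility identities falls out directly, so no further auxiliary result is needed.
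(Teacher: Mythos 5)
Your proposal is correct and follows the standard direct-verification route; the paper itself states this proposition as a citation to \cite{ni2013poisson} and supplies no proof, and the argument you outline (split into the Lie part, the commutative associative part, and the Leibniz part, then collect coefficients of $v_1, v_2, v_3$) is exactly the expected one. One bookkeeping slip: in your expansion of the Leibniz identity the terms $-\mu(a_2)\rho(a_3)v_1 - \mu(a_3)\rho(a_2)v_1$ belong to the right-hand side $[x,y]\cdot z + y\cdot[x,z]$, not to the left-hand side $[x, y\cdot z]$ (whose only $v_1$-term is $-\rho(a_2\cdot_A a_3)v_1$), and there is no $\mu(a_2\cdot_A a_3)$ term anywhere; nevertheless the identities you ultimately extract from the coefficients of $v_1$ and of $v_2, v_3$ are the correct mixed compatibility conditions, so the proof goes through once the expansion is written out cleanly.
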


\begin{example}
	(\cite{ni2013poisson})
	Let $(A, [\ ,\ ]_A, \cdot_A)$ be a Poisson algebra. 
	Then $(A, \ad, L)$ is a representation of $(A, [\ ,\ ]_A, \cdot_A)$, 
	which is called the \textbf{adjoint representation}, 
	and $(A^*, -\ad^*, L^*)$ is also a representation of $(A, [\ ,\ ]_A, \cdot_A)$, which is called the \textbf{coadjoint representation}.  
\end{example}

A \textbf{Lie bialgebra} is a pair of Lie algebras $(A, [\ ,\ ]_A)$ and $(A^*, [\ ,\ ]_{A^*})$ such that 
\begin{equation*}
	\delta([a, b]_A) = (\ad(a) \otimes \id + \id \otimes \ad(a) )\delta(b) - (\ad (b) \otimes \id + \id \otimes \ad (b) )\delta(a), \;\;
	\forall a, b \in A,
\end{equation*}
where $\delta: A \to     \mathrm{Alt}^2 (A)$   is defined by $\langle \delta(a), x^* \otimes y^*\rangle = \langle a, [x^*, y^*]_{A^*} \rangle$.
A Lie bialgebra is denoted by $((A, [\ ,\ ]_A), (A^*, [\ ,\ ]_{A^*}))$ or $(A, [\ ,\ ]_A, \delta)$.

An \textbf{infinitesimal bialgebra} is a pair of  associative algebras $(A, \cdot_A)$ and $(A^*, \cdot_{A^*})$ such that 
\begin{equation*}
	\Delta(a \cdot_A b) = (L(a) \otimes \id)\Delta(b) + (\id \otimes R(b) )\Delta(a), \;\;
	\forall a, b \in A,
\end{equation*}
where $\Delta: A \to A \otimes A$ is defined by $\langle \Delta(a), x^* \otimes y^*\rangle = \langle a, x^* \cdot_{A^*} y^* \rangle$.
An infinitesimal bialgebra is denoted by $((A, \cdot_A), (A^*, \cdot_{A^*}))$ or $(A, \cdot_A, \Delta)$.

\begin{definition}
	(\cite{ni2013poisson}) 
	A \textbf{Poisson bialgebra} is a pair of Poisson algebras $(A, [\ ,\ ]_A, \cdot_A)$ and $(A^*$, $[\ ,\ ]_{A^*}$, $\cdot_{A^*})$ such that 
	\begin{enumerate}
		\item 
		$((A, [\ ,\ ]_A), (A^*, [\ ,\ ]_{A^*}))$ is a Lie bialgebra;
		
		\item 
		$((A, \cdot_A), (A^*, \cdot_{A^*}))$ is an infinitesimal bialgebra;
		
		\item 
		$\delta$ and $\Delta$ are compatible in the following sense:
		\begin{align*}
			\delta(a \cdot_A b) &= (L(a) \otimes \id)\delta(b) + (L(b) \otimes \id)\delta(a) + (\id \otimes \ad(a))\Delta(b) +(\id \otimes \ad(b))\Delta(a),\\
			\Delta([a, b]_A) &= (\ad(a) \otimes \id + \id \otimes \ad(a))\Delta(b) + (L(b) \otimes \id - \id \otimes L(b))\delta(a), \;\; \forall a, b \in A,
		\end{align*}
		where $\delta, \Delta$ are the linear duals of $[\ ,\ ]_{A^*}$ and $\cdot_{A^*}$ respectively.
	\end{enumerate}
	A Poisson bialgebra is denoted by $((A, [\ ,\ ]_A, \cdot_A), (A^*, [\ ,\ ]_{A^*}, \cdot_{A^*}))$ or $(A, [\ ,\ ]_A, \cdot_A, \delta, \Delta)$.
\end{definition}
Poisson bialgebras could be equivalently characterized by Manin triples of Poisson algebras (\cite{ni2013poisson}). 
Notably, given a Poisson bialgebra $((A, [\ ,\ ]_A, \cdot_A), (A^*, [\ ,\ ]_{A^*}, \cdot_{A^*}))$, the pair $( (A^*, [\ ,\ ]_{A^*}$, $\cdot_{A^*})$, $(A, [\ ,\ ]_A, \cdot_A))$ forms a Poisson bialgebra.

\begin{definition}
	Let $(A, [\ ,\ ]_A, \cdot_A, \delta_A, \Delta_A)$ and $(B, [\ ,\ ]_B, \cdot_B, \delta_B, \Delta_B)$ be two Poisson bialgebras.
	A \textbf{homomorphism} of Poisson bialgebras is a homomorphism of Poisson algebras $\varphi: A \to B$ such that 
	\begin{equation*}
		(\varphi \otimes \varphi)\delta_A = \delta_B \circ \varphi, \;\; 
		(\varphi \otimes \varphi)\Delta_A = \Delta_B \circ \varphi.
	\end{equation*}
	Furthermore, if $\varphi: A \to B$ is a linear isomorphism of vector spaces, then $\varphi: A \to B$ is called an \textbf{isomorphism} of Poisson bialgebras.
\end{definition}

\begin{proposition}\label{prop:tmpb}
	Let $((A, [\ ,\ ]_A, \cdot_A), (A^*, [\ ,\ ]_{A^*}, \cdot_{A^*}))$ be a Poisson bialgebra and $B$ be a vector space.
	Suppose that $\varphi: A \to B$ is a linear isomorphism of vector spaces.
	Define brackets $[\ ,\ ]_B: B \otimes B \to B, \; [\ , \ ]_{B^*}: B^* \otimes B^* \to B^*$ and multiplications $\cdot_B: B \otimes B \to B, \; \cdot_{B^*}: B^* \otimes B^* \to B^*$ respectively by 
	\begin{align*}
		&[a, b]_B = \varphi([\varphi^{-1}(a), \varphi^{-1}(b)]_A), \;\; && a \cdot_B b = \varphi(\varphi^{-1}(a) \cdot_A \varphi^{-1}(b)), \;\; \forall a, b \in B, \\
		&[x^*, y^*]_{B^*} = (\varphi^*)^{-1}([\varphi^{*}(x^*), \varphi^{*}(y^*)]_{A^*}), \;\; &&x^* \cdot_{B^*} y^* = (\varphi^*)^{-1}(\varphi^{*}(x^*) \cdot_{A^*} \varphi^{*}(y^*)), \;\;\forall x^*, y^* \in B^*. 
	\end{align*}
	Then $((B, [\ ,\ ]_B, \cdot_B), (B^*, [\ ,\ ]_{B^*}, \cdot_{B^*}))$ is a Poisson bialgebra.
	Furthermore, $\varphi$ is an isomorphism of Poisson bialgebras.
\end{proposition}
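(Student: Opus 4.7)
The plan is a routine transport of structure argument, carried out in three stages.

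First, I verify that $(B, [\ ,\ ]_B, \cdot_B)$ is a Poisson algebra. By the very definition of $[\ ,\ ]_B$ and $\cdot_B$, the bijection $\varphi: A \to B$ intertwines the respective operations, so the Jacobi identity, commutativity, associativity and the Leibniz rule all pull back to the corresponding identities on $A$. The identical argument, applied to the linear isomorphism $(\varphi^*)^{-1}: A^* \to B^*$, shows that $(B^*, [\ ,\ ]_{B^*}, \cdot_{B^*})$ is a Poisson algebra.

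Next, I establish the intertwining relations
\begin{equation*}
	\delta_B \circ \varphi = (\varphi \otimes \varphi) \circ \delta_A, \qquad
	\Delta_B \circ \varphi = (\varphi \otimes \varphi) \circ \Delta_A,
\end{equation*}
where $\delta_A, \Delta_A$ (resp. $\delta_B, \Delta_B$) are the linear duals of $[\ ,\ ]_{A^*}, \cdot_{A^*}$ (resp. $[\ ,\ ]_{B^*}, \cdot_{B^*}$). For any $a \in A$ and $x^*, y^* \in B^*$, the identity $\langle \varphi(a), (\varphi^*)^{-1}(z^*)\rangle = \langle a, z^*\rangle$, immediate from the definition of $\varphi^*$, reduces the pairing $\langle \delta_B(\varphi(a)), x^* \otimes y^*\rangle$ to $\langle \delta_A(a), \varphi^*(x^*) \otimes \varphi^*(y^*)\rangle$, yielding the first relation; the second is analogous.

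Finally, I verify the three Poisson bialgebra compatibility conditions on $B$. For arbitrary $a \in B$, the fact that $\varphi$ is a Poisson algebra isomorphism yields $\ad_B(a) = \varphi \circ \ad(\varphi^{-1}(a)) \circ \varphi^{-1}$ and $L_B(a) = \varphi \circ L(\varphi^{-1}(a)) \circ \varphi^{-1}$, where $\ad_B, L_B$ denote the adjoint and left multiplication operators on $(B, [\ ,\ ]_B, \cdot_B)$. Combining these with the intertwining relations above, each of the three compatibility conditions on $B$, after applying $(\varphi^{-1} \otimes \varphi^{-1})$ to both sides, reduces term by term to the corresponding compatibility condition satisfied by $(A, [\ ,\ ]_A, \cdot_A, \delta_A, \Delta_A)$. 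The concluding claim that $\varphi$ is an isomorphism of Poisson bialgebras is then just a restatement of the Poisson algebra compatibility together with the intertwining relations from the second stage.

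The only obstacle is bookkeeping: the three compatibility conditions couple $\delta$ and $\Delta$ with both products on $A$, so several intertwining relations must be tracked simultaneously. There is no genuine conceptual difficulty, since every step is dictated by transport of structure along the linear isomorphism $\varphi$.
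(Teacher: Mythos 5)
Your proposal is correct and is precisely the transport-of-structure verification that the paper dismisses with ``It follows from a straightforward verification'': the key identity $\langle \varphi(a), (\varphi^*)^{-1}(z^*)\rangle = \langle a, z^*\rangle$ gives the intertwining of $\delta$ and $\Delta$ with $\varphi\otimes\varphi$, and everything else pulls back along $\varphi$. No discrepancy with the paper's (omitted) argument.
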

\begin{proof}
	It follows from a straightforward verification.
\end{proof}

\begin{definition}
	(\cite{ni2013poisson})
	A Poisson bialgebra $(A, [\ ,\ ]_A, \cdot_A, \delta, \Delta)$ is called \textbf{coboundary}
	if there exists $r \in A \otimes A$ such that:
	\begin{align}
		\delta(a) &= (\id \otimes \ad(a) + \ad(a) \otimes \id )(r), \label{eq:pcbd1}\\
		\Delta(a) &= (\id \otimes L(a) - L(a) \otimes \id)(r), \;\; \forall a \in A. \label{eq:pcbd2}
	\end{align}
	A coboundary Poisson bialgebra is denoted by $(A, [\ ,\ ]_A, \cdot_A, \delta_r, \Delta_r)$.
\end{definition}

\begin{theorem}\label{thm:cbd}
	{\rm (\cite[Theorem~2]{ni2013poisson})}
	Let $(A, [\ ,\ ]_A, \cdot_A)$ be a Poisson algebra and $r \in A \otimes A$.
	Writing $r$ as $r = S + \Lambda$ with $S \in \mathrm{Sym}^2(A)$ and $\Lambda \in \mathrm{Alt}^2(A)$.
	Let $\delta: A \to A \otimes A$ and $\Delta: A \to A \otimes A$ be two linear maps defined by Eqs.~\eqref{eq:pcbd1} and \eqref{eq:pcbd2} respectively.
	Then $(A^*, \delta^*, \Delta^*)$ is a Poisson algebra such that $(A, [\ ,\ ]_A, \cdot_A, \delta, \Delta)$ is a Poisson bialgebra if and only the following conditions are satisfied (for all $a \in A$):
	\begin{enumerate}[$(1)$]
		\item 
		$(\ad(a) \otimes \id + \id \otimes \ad(a))S = 0$;
		
		\item 
		$(L(a) \otimes \id - \id \otimes L(a))S = 0$;
		
		\item 
		$(\ad(a) \otimes \id \otimes \id + \id \otimes \ad(a) \otimes \id + \id \otimes \id \otimes \ad(a))\bm{C}(r) = 0$;
		
		\item 
		$(L(a) \otimes \id \otimes \id - \id \otimes \id \otimes L(a))\bm{A}(r) = 0$;
		
		\item 
		$(\ad(a) \otimes \id \otimes \id)\bm{A}(r) - (\id \otimes L(a) \otimes \id - \id \otimes \id \otimes L(a))\bm{C}(r) = 0$,
	\end{enumerate}
	where $\bm{C}(r) := [r_{12}, r_{13}]_A + [r_{13}, r_{23}]_A + [r_{12}, r_{23}]_A$ and $\bm{A}(r) := r_{12} \cdot_A r_{13} + r_{13} \cdot_A r_{23} - r_{23} \cdot_A r_{12}$.
\end{theorem}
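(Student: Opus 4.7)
The plan is to verify the defining axioms of a Poisson bialgebra one by one under the coboundary hypothesis and match each to one of the five listed conditions. A Poisson bialgebra requires that $\delta$ be a Lie bialgebra $1$-cocycle, that $\Delta$ be an infinitesimal bialgebra $1$-cocycle, that $(A^*, \delta^*, \Delta^*)$ be a Poisson algebra, and that the two mixed compatibility equations between $\delta$ and $\Delta$ hold. For coboundaries, the $1$-cocycle conditions for $\delta$ and $\Delta$ are automatic, by the classical analyses of coboundary Lie bialgebras and coboundary infinitesimal bialgebras. It therefore remains only to characterize when $(A^*, \delta^*, \Delta^*)$ is a Poisson algebra and when the two mixed compatibility axioms hold, and then to match these requirements to conditions (1)--(5).

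\textbf{Matching conditions (1)--(4).} Write $r = S + \Lambda$ and compute the $\tau$-parity of $\delta(a)$ and $\Delta(a)$: a direct expansion shows that the symmetric part of $\delta(a)$ equals $(\ad(a) \otimes \id + \id \otimes \ad(a))S$, while the skew part of $\Delta(a)$ equals $(\id \otimes L(a) - L(a) \otimes \id)S$. Skew-symmetry of $\delta$ (needed for $\delta^*$ to define a Lie bracket) and symmetry of $\Delta$ (needed for $\Delta^*$ to define a commutative product) then yield conditions (1) and (2). Granted these, the co-Jacobi identity for $\delta^*$ and the associativity of $\Delta^*$ translate, via a standard coboundary computation in $A^{\otimes 3}$, to the $\ad$-invariance of $\bm{C}(r)$ and an $L$-invariance of $\bm{A}(r)$, recovering conditions (3) and (4). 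These two steps mirror the classical analyses of coboundary Lie bialgebras (Drinfeld) and coboundary infinitesimal bialgebras (Aguiar).

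\textbf{Condition (5) and main obstacle.} What remains is the Leibniz compatibility between $\delta^*$ and $\Delta^*$ on $A^*$, together with the two mixed Poisson compatibility equations on $A$. Expanding each of these via the coboundary formulas produces relations in $A^{\otimes 3}$; one then simplifies them using conditions (1)--(4), the Leibniz rule of $(A, [\ ,\ ]_A, \cdot_A)$, and the symmetry/skew identities of $S$ and $\Lambda$, and observes that all three requirements collapse to the single identity (5). This collapse step is the main obstacle: the bookkeeping of permutations is intricate, the Lie and associative structures interact throughout, and the key insight is that these apparently distinct mixed axioms in fact reduce to one relation jointly involving $\bm{C}(r)$ and $\bm{A}(r)$.
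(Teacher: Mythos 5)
The paper does not prove this theorem at all: it is quoted from \cite[Theorem~2]{ni2013poisson}, so there is no in-paper argument to compare against. Your outline is the standard route (and the one the cited source takes): the cocycle conditions are automatic for coboundaries; the $\tau$-parity analysis gives (1) and (2); co-Jacobi and coassociativity give (3) and (4) by the Drinfeld/Aguiar-type computations; the remaining axiom gives (5). Your parity claims are exactly right, since $\ad(a)\otimes\id+\id\otimes\ad(a)$ commutes with $\tau$ while $\id\otimes L(a)-L(a)\otimes\id$ anticommutes with it, so the symmetric part of $\delta(a)$ is $(\ad(a)\otimes\id+\id\otimes\ad(a))S$ and the skew part of $\Delta(a)$ is $(\id\otimes L(a)-L(a)\otimes\id)S$.

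Two caveats. First, your accounting of where (5) comes from is off: the two mixed compatibility identities relating $\delta$ and $\Delta$ on $A$ hold automatically for any coboundary pair, with no condition on $r$. For the first one the cross terms of the form $(L(a)\otimes\ad(b))(r)$ cancel in pairs and the operator identity $\ad(a)L(b)+\ad(b)L(a)=\ad(a\cdot_A b)$ (a consequence of the Leibniz rule in $A$) matches what remains to $\delta(a\cdot_A b)$; similarly $\ad(a)L(b)-L(b)\ad(a)=L([a,b]_A)$ handles the second. These identities collapse to $0=0$, not to (5); condition (5) comes solely from the Leibniz compatibility between $\delta^*$ and $\Delta^*$ on $A^*$. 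Second, the step that actually produces (5) --- expanding $[x^*, y^*\cdot_r z^*]_r-[x^*,y^*]_r\cdot_r z^*-y^*\cdot_r[x^*,z^*]_r$ via Eqs.~\eqref{eq:pcbdr1}--\eqref{eq:pcbdr2} and identifying the obstruction with the pairing against $(\ad(a)\otimes\id\otimes\id)\bm{A}(r)-(\id\otimes L(a)\otimes\id-\id\otimes\id\otimes L(a))\bm{C}(r)$ --- is asserted rather than carried out. That computation is the genuine content of the theorem and cannot be dismissed as bookkeeping; as written, the proposal is a correct plan rather than a proof.
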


\begin{definition}
	(\cite{ni2013poisson})
	Let $(A, [\ ,\ ]_A, \cdot_A)$ be a Poisson algebra and $r \in A \otimes A$.
	Then $r$ is called a solution of the \textbf{Poisson Yang-Baxter equation} in $(A, [\ ,\ ]_A, \cdot_A)$ if 
	$\bm{C}(r) = \bm{A}(r) = 0$.
\end{definition}
Note that $\bm{C}(r) = 0$ is called the \textbf{classical Yang-Baxter equation} in the Lie algebra $(A, [\ ,\ ]_A)$ and $\bm{A}(r) = 0$ is called the \textbf{associative Yang-Baxter equation} in the associative algebra $(A, \cdot_A)$.

Let $A$ be  a vector space. Any $r \in A \otimes A$ can be identified as maps from $A^*$ to $A$, which we denote by $r_{+}: A^* \to A$ and $r_{-}: A^* \to A$, respectively and explicitly,
\begin{equation*}
	\langle r_{+}(x^*), y^*\rangle = - \langle x^*, r_{-}(y^*)\rangle = \langle r, x^* \otimes y^*\rangle, \;\; \forall x^*, y^* \in A^*.
\end{equation*}
Note that $(\tau(r))_{+} = -r_{-}$ and $(\tau(r))_{-} = -r_{+}$. 
Moreover, the bracket and multiplication on $A^*$ defined by Eqs.~\eqref{eq:pcbd1}-\eqref{eq:pcbd2} (as the duals) are respectively given by
\begin{align}
	[x^*, y^*]_r &= -\ad^*(r_{+}(x^*)) y^* +\ad^*(r_{-}(y^*)) x^* , \label{eq:pcbdr1}\\
	x^* \cdot_r y^* &= L^*(r_{+}(x^*)) y^* + L^*(r_{-}(y^*)) x^*, \;\; \forall x^*,y^* \in A^*. \label{eq:pcbdr2}
\end{align}

\begin{lemma}\label{Dj:randtr}
	Let $(A, [\ ,\ ]_A, \cdot_A)$ be a Poisson algebra and $r \in A \otimes A$.  
	Then the following statements are equivalent:
 	\begin{enumerate}[$(1)$]
		\item\label{pbr:1} 
		$r$ is a solution of the Poisson Yang-Baxter equation in $(A, [\ ,\ ]_A, \cdot_A)$.
		
		\item\label{pbr:2} 
		$\tau (r)$ is a solution of the Poisson Yang-Baxter equation in $(A, [\ ,\ ]_A, \cdot_A)$.
		
		\item\label{pbr:3} 
		The following equations hold:
		\begin{align}
			[ r_{+}\left(x^{*}\right) , r_{+}\left(y^{*}\right)]_A &=r_{+}\left(-\operatorname{ad}^{*}\left(r_{+}\left(x^{*}\right)\right) y^{*}+\operatorname{ad}^{*}\left(r_{-}\left(y^{*}\right)\right) x^{*}\right), \label{bb1}\\
			r_{+}\left(x^{*}\right) \cdot_A r_{+}\left(y^{*}\right)&=r_{+}\left(L^{*}\left(r_{+}\left(x^{*}\right)\right) y^{*}+L^{*}\left(r_{-}\left(y^{*}\right)\right) x^{*}\right), \quad \forall x^{*}, y^{*} \in A^{*}. \label{bb2}
			\end{align} 
		
		\item\label{pbr:4} 
		The following equations hold:
		\begin{align}
			[ r_{-}\left(x^{*}\right) , r_{-}\left(y^{*}\right)]_A &= r_{-}\left(-\operatorname{ad}^{*}\left(r_{-}\left(x^{*}\right)\right) y^{*}+\operatorname{ad}^{*}\left(r_{+}\left(y^{*}\right)\right) x^{*}\right),\\
			r_{-}\left(x^{*}\right) \cdot_A r_{-}\left(y^{*}\right)&=r_{-}\left(L^{*}\left(r_{-}\left(x^{*}\right)\right) y^{*}+L^{*}\left(r_{+}\left(y^{*}\right)\right) x^{*}\right), \quad \forall x^{*}, y^{*} \in A^{*}. 
		\end{align}
	 \end{enumerate}
\end{lemma}
\begin{proof}
	(\ref{pbr:1}) $ \Longleftrightarrow $ (\ref{pbr:2}): Let $r = \sum\limits_i a_i \otimes b_i \in A \otimes A$. Then we have
	\begin{align*}
		\bm{C}(\tau(r)) &=  \sum_{i,j}  [b_{i}, b_{j}]_A  
		 \otimes   a_{i}   \otimes  a_{j}+ b_{i}  \otimes  [a_{i}, b_{j}]_A \otimes    a_{j}  + b_{i}  \otimes 
		b_{j} \otimes   [a_{i}, a_{j}]_A \\
		&=  \sigma_{13}\left(\sum_{i,j} a_{j}   
		 \otimes   a_{i}   \otimes  [b_{i}, b_{j}]_A +  a_{j}\otimes  [a_{i}, b_{j}]_A \otimes     b_{i}  +  [a_{i}, a_{j}]_A \otimes 
		b_{j} \otimes    b_{i}\right)\\
		&=  -\sigma_{13}(\bm{C}(r)),\\
		\bm{A}(\tau(r)) &=  \sum_{i,j}   b_{i} \cdot_A b_{j}  
		 \otimes   a_{i}   \otimes  a_{j}  + b_{i}  \otimes 
		b_{j} \otimes    a_{i} \cdot_A   a_{j} - b_{i}  \otimes     b_{j} \cdot_A a_{i}  \otimes    a_{j}\\
		&=  \sigma_{13}\left(\sum_{i,j}   
		 a_{j}\otimes   a_{i}   \otimes    b_{i} \cdot_A b_{j} +   a_{i} \cdot_A   a_{j}\otimes 
		b_{j} \otimes   b_{i}  -   a_{j}\otimes     b_{j} \cdot_A a_{i}  \otimes    b_{i}\right)\\
		&=  \sigma_{13}\left(\sum_{i,j}   
		 a_{j}\otimes   a_{i}   \otimes   b_{j} \cdot_A  b_{i} +   a_{j} \cdot_A   a_{i}\otimes 
		b_{j} \otimes   b_{i}  -   a_{j}\otimes     a_{i} \cdot_A  b_{j} \otimes    b_{i}\right)\\
		&=   \sigma_{13}(\bm{A}(r)),
	\end{align*}
	where $\sigma_{13} \in \operatorname{End}(A \otimes A \otimes A)$ is defined by  $\sigma_{13} (a \otimes b \otimes c) = c \otimes b \otimes a$ for all $a,b,c \in A$. 
	Thus, $r$ is a solution of the Poisson Yang-Baxter equation if and only if $\tau (r)$ is a solution of the Poisson Yang-Baxter equation.

	(\ref{pbr:1}) $ \Longleftrightarrow $ (\ref{pbr:3}): It follows from \cite[Proposition~3.8]{bai2010nonabelian} and  \cite[Theorem~3.5]{bai2012oass}. 

	(\ref{pbr:2}) $ \Longleftrightarrow $ (\ref{pbr:4}): It is similar to the proof of (\ref{pbr:1}) $ \Longleftrightarrow $ (\ref{pbr:3}).  
\end{proof}

We now turn to the definition of  quasi-triangular Poisson  bialgebras  as a special case of the coboundary Poisson bialgebras, and the notion of $(\ad, L)$-invariance of a 2-tensor in $A \otimes A$ is the main ingredient employed, which is motivated by Theorem~\ref{thm:cbd}.

\begin{definition}
	Let $(A, [\ ,\ ]_A, \cdot_A)$ be a Poisson algebra and $r \in A \otimes A$.
	Then $r$ is called \textbf{$(\ad, L)$-invariant} if
	\begin{align}
		(\ad(a) \otimes \id + \id \otimes \ad(a))(r) &= 0, \label{eq:adiv} \\
		(L(a) \otimes \id - \id \otimes L(a))(r) &= 0, \;\; \forall a \in A. \label{eq:liv}
	\end{align}
\end{definition}
Obviously, if $r$ is $(\ad, L)$-invariant, then $\tau(r)$ is also $(\ad, L)$-invariant. 
Denote by $I_r$ the operator
\begin{equation}
	I_r = r_{+} - r_{-}: A^* \to A. \label{eq:i}
\end{equation}
Note that $r_{-} = -r_{+}^*$ and hence $I_r^* = I_r$.
Moreover, let $S$ be the symmetric part of $r$, then $S_{+} = \frac{1}{2}I_r = \frac{1}{2}I_{\tau(r)}$.
In particular, if $r$ is antisymmetric, then $I_r=0$.

\begin{definition}\label{quasipb}
	Let $(A, [\ ,\ ]_A, \cdot_A)$ be a Poisson algebra. 
	If $r$ is a solution of the Poisson Yang-Baxter equation in $(A, [\ ,\ ]_A, \cdot_A)$ and the symmetric part of $r \in A \otimes  A$ is $(\ad, L)$-invariant, then the coboundary Poisson bialgebra $(A, [\ ,\ ]_A, \cdot_A, \delta_r,  \Delta_r)$ induced by $r$ is called a \textbf{quasi-triangular Poisson bialgebra}.
	In particular, if $r$ is antisymmetric, then $(A, [\ ,\ ]_A, \cdot_A, \delta_r, \Delta_r)$ is called a \textbf{triangular Poisson bialgebra}.
\end{definition}

\begin{proposition}\label{tquasipba}
	Let $(A, [\ ,\ ]_A, \cdot_A)$ be a Poisson algebra and $r \in A \otimes  A$. 
	Then $(A, [\ ,\ ]_A, \cdot_A, \delta_r, \Delta_r)$ is a  quasi-triangular Poisson bialgebra if and only if $(A, [\ ,\ ]_A, \cdot_A, \delta_{\tau(r)}, \Delta_{\tau(r)})$ is a quasi-triangular Poisson bialgebra.
\end{proposition}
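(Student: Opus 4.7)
The plan is to reduce the claim to the two defining ingredients of a quasi-triangular Poisson bialgebra, namely (i) $r$ solves the Poisson Yang-Baxter equation, and (ii) the symmetric part of $r$ is $(\ad,L)$-invariant, and to verify that each ingredient is manifestly symmetric under the swap $r \mapsto \tau(r)$.

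First I would observe that writing $r = S + \Lambda$ with $S \in \mathrm{Sym}^2(A)$ and $\Lambda \in \mathrm{Alt}^2(A)$, one has
\begin{equation*}
\tau(r) \;=\; \tau(S) + \tau(\Lambda) \;=\; S - \Lambda,
\end{equation*}
so the symmetric part of $\tau(r)$ coincides with the symmetric part of $r$. Consequently, the $(\ad,L)$-invariance condition (Eqs.~\eqref{eq:adiv}--\eqref{eq:liv}) imposed on the symmetric part of $r$ is literally identical to that imposed on the symmetric part of $\tau(r)$, and the second defining condition of Definition~\ref{quasipb} is preserved.

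Next I would invoke Proposition~\ref{Dj:randtr}, specifically the equivalence (\ref{pbr:1})~$\Longleftrightarrow$~(\ref{pbr:2}), to conclude that $r$ is a solution of the Poisson Yang-Baxter equation in $(A, [\ ,\ ]_A, \cdot_A)$ if and only if $\tau(r)$ is. This takes care of the first defining condition. Combining these two observations, $r$ satisfies the hypotheses of Definition~\ref{quasipb} if and only if $\tau(r)$ does, so $(A, [\ ,\ ]_A, \cdot_A, \delta_r, \Delta_r)$ is quasi-triangular if and only if $(A, [\ ,\ ]_A, \cdot_A, \delta_{\tau(r)}, \Delta_{\tau(r)})$ is.

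There is essentially no obstacle here: both conditions behave symmetrically under $\tau$, the first by the swap identities $\bm{C}(\tau(r)) = -\sigma_{13}(\bm{C}(r))$ and $\bm{A}(\tau(r)) = \sigma_{13}(\bm{A}(r))$ already computed in the proof of Proposition~\ref{Dj:randtr}, and the second because $\tau$ fixes the symmetric part of $r$. The proof is thus a short two-line argument citing Proposition~\ref{Dj:randtr} together with the decomposition $r = S + \Lambda$; no new computation is required.
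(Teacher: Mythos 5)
Your proof is correct and follows the same route as the paper, which simply cites Proposition~\ref{Dj:randtr} for the invariance of the Poisson Yang-Baxter equation under $\tau$ and relies on the (already noted) fact that $\tau$ fixes the symmetric part of $r$, so the $(\ad,L)$-invariance condition is unchanged. Your write-up just makes these two ingredients explicit.
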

\begin{proof}
	It follows from Lemma~\ref{Dj:randtr}.
\end{proof}

\begin{lemma}\label{lem:adi}
	Let $(A, [\ ,\ ]_A, \cdot_A)$ be a Poisson algebra and $r \in A \otimes A$.
	Let $S$ be the symmetric part of $r$. 
	Then the following conditions are equivalent:
	\begin{enumerate}[$(1)$]
		\item\label{pbinv:1} 
		$S$ is $(\ad, L)$-invariant. 
		
		\item\label{pbinv:2}  
		The following equations hold:
		\begin{align}
			S_{+}(\ad^*(a) x^*) + [a, S_{+}(x^*)]_A &= 0, \label{eq:adi1} \\ 
			S_{+}(L^*(a) x^*) - a \cdot_A S_{+}(x^*)&= 0, \;\; \forall a \in A, x^* \in A^*. \label{eq:adi2} 
		\end{align}
		
		\item\label{pbinv:3} 	
		The following equations hold:
		\begin{align}
			\ad^*(S_+(x^*))y^* + \ad^*(S_+(y^*))x^* &= 0, \label{eq:sadi1} \\
			L^*(S_+(x^*))y^* - L^*(S_+(y^*))x^* &= 0, \;\; \forall  x^*,y^* \in A^*. \label{eq:sadi2}
		\end{align}
	\end{enumerate}
\end{lemma}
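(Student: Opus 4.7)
The plan is to show $(1) \Leftrightarrow (2)$ by transferring the tensor identities \eqref{eq:adiv}--\eqref{eq:liv} into statements about the linear map $S_{+} : A^{*} \to A$ via the pairing with test elements $x^{*} \otimes y^{*} \in A^{*} \otimes A^{*}$, and then show $(2) \Leftrightarrow (3)$ by exploiting two facts: the symmetry of $S$ implies $S_{+}^{*} = S_{+}$ (as observed after \eqref{eq:i}), and the commutativity of $\cdot_{A}$ means $L(a)$ is automatically also a right multiplication.

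For $(1) \Leftrightarrow (2)$, first I would record the two basic identities obtained by writing $S = \sum_{i} s_{i} \otimes t_{i}$ and pairing with $x^{*} \otimes y^{*}$: namely
\begin{align*}
  \langle (\ad(a) \otimes \id)(S),\, x^{*} \otimes y^{*}\rangle &= \langle S_{+}(\ad^{*}(a) x^{*}),\, y^{*}\rangle, \\
  \langle (\id \otimes \ad(a))(S),\, x^{*} \otimes y^{*}\rangle &= \langle S_{+}(x^{*}),\, \ad^{*}(a) y^{*}\rangle = \langle [a, S_{+}(x^{*})]_A,\, y^{*}\rangle,
\end{align*}
and the analogous pair for $L$ in place of $\ad$. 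Adding (respectively subtracting) these and requiring vanishing for all $x^{*}, y^{*}$ gives exactly \eqref{eq:adi1} and \eqref{eq:adi2}, and the argument is reversible.

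For $(2) \Leftrightarrow (3)$, I would pair \eqref{eq:adi1} with $y^{*}$ and use the symmetry $\langle S_{+}(\alpha), \beta\rangle = \langle S_{+}(\beta), \alpha\rangle$ to rewrite the first term as $\langle [a, S_{+}(y^{*})]_A,\, x^{*}\rangle$. Setting $a = z \in A$ arbitrary and using $\langle [a, S_{+}(\alpha)]_A, \beta\rangle = -\langle \ad^{*}(S_{+}(\alpha))\beta,\, a\rangle$, the resulting identity becomes precisely the pairing of \eqref{eq:sadi1} against $a$. For the associative half, the same symmetry rewrite turns \eqref{eq:adi2} into $\langle a \cdot_A S_{+}(y^{*}), x^{*}\rangle - \langle a \cdot_A S_{+}(x^{*}), y^{*}\rangle = 0$, and since $\cdot_{A}$ is commutative this matches the pairing of \eqref{eq:sadi2} against $a$. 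Both transitions are reversible because $a \in A$ was arbitrary.

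The only real obstacle is bookkeeping of signs and of which argument of each pairing sits in $A$ versus $A^{*}$; there is no conceptual difficulty, because the commutativity of $\cdot_{A}$ is what allows the single identity \eqref{eq:sadi2} (with left multiplications on both sides) to be equivalent to \eqref{eq:adi2}, whereas in a non-commutative setting one would obtain a relation mixing left and right multiplications instead.
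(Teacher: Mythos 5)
Your proposal is correct and follows essentially the same route as the paper: both equivalences are obtained by pairing the tensor identities against $x^{*}\otimes y^{*}$, and the passage from $(2)$ to $(3)$ uses exactly the self-adjointness $\langle S_{+}(\alpha),\beta\rangle=\langle S_{+}(\beta),\alpha\rangle$ coming from the symmetry of $S$ together with the commutativity of $\cdot_A$. The sign bookkeeping you flag works out as you describe, so there is nothing to add.
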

\begin{proof}
(\ref{pbinv:1}) $\Longleftrightarrow$ (\ref{pbinv:2}): For all $a \in A$ and $x^*, y^* \in A^*$, we have 
	\begin{align*}
		\langle S_{+}(\ad^*(a) x^*) + [a, S_{+}(x^*)], y^*\rangle &= \langle S, \ad^*(a) x^* \otimes y^* + x^* \otimes \ad^*(a) y^* \rangle \\
		&= \langle (\ad(a) \otimes \id + \id \otimes \ad(a))S, x^* \otimes y^* \rangle, \\
		\langle S_{+}(L^*(a) x^*) - a \cdot S_{+}(x^*), y^*\rangle &= \langle S, L^*(a) x^* \otimes y^* - x^* \otimes L^*(a) y^* \rangle \\
		&= \langle (L(a) \otimes \id - \id \otimes L(a))S, x^* \otimes y^* \rangle,
	\end{align*}
	which show  that $S$ is $(\ad, L)$-invariant if and only if Eqs.~\eqref{eq:adi1}-\eqref{eq:adi2} hold.
	
	(\ref{pbinv:2}) $\Longleftrightarrow $ (\ref{pbinv:3}): For all $a \in A$ and $x^*, y^* \in A^*$, we have 
		\begin{align*}
		\langle \ad^*(S_{+}(x^*)) y^* + \ad^*(S_{+}(y^*)) x^* , a\rangle &= \langle y^*, [S_{+}(x^*), a]_A\rangle + \langle x^*, [S_{+}(y^*), a]_A\rangle  \\
	  	 &= -\langle y^*, [ a,S_{+}(x^*)]_A + S_{+}(\ad^*(a)(x^*)) \rangle, \\
		\langle L^*(S_{+}(x^*)) y^* - L^*(S_{+}(y^*)) x^*, a \rangle &= \langle y^*, S_{+}(x^*) \cdot_A a\rangle - \langle x^*, S_{+}(y^*) \cdot_A a\rangle\\
		&= -\langle y^*,  - a \cdot_A S_{+}(x^*) + S_{+}(L^*(a)x^*)  \rangle,	
	\end{align*}
which show  that Eqs.~\eqref{eq:adi1}-\eqref{eq:adi2} hold if and only if Eqs.~\eqref{eq:sadi1}-\eqref{eq:sadi2} hold.
\end{proof}

\begin{theorem}\label{thm:pybe2h}
	Let $(A, [\ ,\ ]_A, \cdot_A)$ be a Poisson algebra and $r \in A \otimes A$.
	Writing $r$ as $r = S+ \Lambda$ with $S \in \mathrm{Sym}^2(A)$ and $\Lambda \in \mathrm{Alt}^2(A)$.
	Suppose that $S$ is $(\ad, L)$-invariant.
	Then $r$ is a solution of the Poisson Yang-Baxter equation in $(A, [\ ,\ ]_A, \cdot_A)$ if and only if $(A, [\ ,\ ]_r, \cdot_r)$ is a Poisson algebra and the linear maps $r_{+}, r_{-}: (A^*, [\ ,\ ]_r, \cdot_r) \to (A, [\ ,\ ]_A, \cdot_A)$ are both Poisson algebra homomorphisms, where $[\ ,\ ]_r, \cdot_r: A^* \otimes A^* \to A^*$ are respectively defined by Eqs.~\eqref{eq:pcbdr1} and \eqref{eq:pcbdr2}.
\end{theorem}
\begin{proof}
	($\Longrightarrow$) 
	By Definition~\ref{quasipb}, $(A, [\ ,\ ]_A, \cdot_A, \delta_r, \Delta_r)$ is a quasi-triangular Poisson bialgebra where $\delta_r,\Delta_r$ are respectively defined by Eqs.~\eqref{eq:pcbd1} and \eqref{eq:pcbd2}. 
	Thus, $(A, [\ ,\ ]_r, \cdot_r)$ is a Poisson algebra where $[\ ,\ ]_r, \cdot_r$ are respectively given by Eqs.~\eqref{eq:pcbdr1} and \eqref{eq:pcbdr2}. 
	Moreover, by Lemma~\ref{Dj:randtr} and \ref{lem:adi}, for all $x^*, y^* \in A^*$, we have 
	\begin{align*}
		r_{+}\left([x^{*} ,y^{*}]_r \right)  &=r_{+}\left(-\operatorname{ad}^{*}\left(r_{+}\left(x^{*}\right)\right) y^{*}+\operatorname{ad}^{*}\left(r_{-}\left(y^{*}\right)\right) x^{*}\right) =[ r_{+}\left(x^{*}\right) , r_{+}\left(y^{*}\right)]_A ,\\
		r_{+}\left( x^{*} \cdot_r y^{*} \right) &=r_{+}\left(L^{*}\left(r_{+}\left(x^{*}\right)\right) y^{*}+L^{*}\left(r_{-}\left(y^{*}\right)\right) x^{*}\right)=  r_{+}\left(x^{*}\right) \cdot_A r_{+}\left(y^{*}\right),\\
		r_{-}\left([x^{*} ,y^{*}]_r \right)  &=r_{-}\left(-\operatorname{ad}^{*}\left(r_{+}\left(x^{*}\right)\right) y^{*}+\operatorname{ad}^{*}\left(r_{-}\left(y^{*}\right)\right) x^{*}\right) \\
		&=r_{-}\left(-2\operatorname{ad}^{*}\left(S_{+}\left(x^{*}\right)\right) y^{*}-\operatorname{ad}^{*}\left(r_{-}\left(x^{*}\right)\right) y^{*}+\operatorname{ad}^{*}\left(r_{+}\left(y^{*}\right)\right) x^{*}-2\operatorname{ad}^{*}\left(S_{+}\left(y^{*}\right)\right) x^{*}\right) \\
		&=r_{-}\left(  -\operatorname{ad}^{*}\left(r_{-}\left(x^{*}\right)\right) y^{*}  +\operatorname{ad}^{*}\left(r_{+}\left(y^{*}\right)\right) x^{*}  \right) \\
		&=[ r_{-}\left(x^{*}\right) , r_{-}\left(y^{*}\right)]_A, \\
		r_{-}\left( x^{*} \cdot_r y^{*} \right) &=r_{-}\left(L^{*}\left(r_{+}\left(x^{*}\right)\right) y^{*}+L^{*}\left(r_{-}\left(y^{*}\right)\right) x^{*}\right)\\
		&=r_{-}\left(2L^{*}\left(S_{+}\left(x^{*}\right)\right) y^{*} +  L^{*}\left(r_{-}\left(x^{*}\right)\right) y^{*}+L^{*}\left(r_{+}\left(y^{*}\right)\right) x^{*}-2L^{*}\left(S_{+}\left(y^{*}\right)\right) x^{*}\right)\\
		&=r_{-}\left(L^{*}\left(r_{-}\left(x^{*}\right)\right) y^{*}+L^{*}\left(r_{+}\left(y^{*}\right)\right) x^{*}\right)\\
		&=  r_{-}\left(x^{*}\right) \cdot_A r_{-}\left(y^{*}\right),
	\end{align*} 
	which shows that $r_{+}, r_{-}$ are both Poisson algebra homomorphisms.
	
	($\Longleftarrow$) 
	It follows from Lemma~\ref{Dj:randtr}. 
\end{proof}

Recall that a bilinear form $\mathfrak{B} \in \otimes^2 A^*$ on a Poisson algebra $(A, [\ ,\ ]_A, \cdot_A)$ is called \textbf{invariant} if 
\begin{equation*}
	\mathfrak{B}([a, b]_A, c) = \mathfrak{B}(a, [b, c]_A), \;
	\mathfrak{B}(a \cdot_A b, c) = \mathfrak{B}(a, b \cdot_A c),  
	\;\; \forall a, b, c \in A.
\end{equation*}
A \textbf{quadratic Poisson algebra} is a quadruple $(A, [\ ,\ ]_A, \cdot_A, \mathfrak{B})$ where $(A, [\ ,\ ]_A, \cdot_A)$ is a Poisson algebra and $\mathfrak{B} \in \otimes^2 A^*$ is a nondegenerate symmetric invariant bilinear form on $(A, [\ ,\ ]_A, \cdot_A)$.

\begin{proposition}
	Let $\left(A,[\ ,\ ]_A, \cdot_{A}\right)$ be a Poisson algebra. 
	Then $\left(A \ltimes_{-\mathrm{ad}^{*},L^{*}} A^{*}, [\ ,\ ], \cdot, \mathfrak{B}_d \right)$ is a quadratic Poisson algebra, where $\mathfrak{B}_d$ is defined by
	\begin{align}
		\mathfrak{B}_d\left(a+x^{*}, b+y^{*}\right)=  \left\langle  a,y^{*}\right\rangle + \left\langle b, x^{*}\right\rangle, \quad  \forall a, b \in A,\quad x^{*}, y^{*} \in A^{*}. \label{abm}
	\end{align}
\end{proposition}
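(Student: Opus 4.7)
The plan is to verify the three conditions defining a quadratic Poisson algebra: (i) the underlying semi-direct product is a well-defined Poisson algebra, (ii) $\mathfrak{B}_d$ is a nondegenerate symmetric bilinear form, and (iii) $\mathfrak{B}_d$ is invariant with respect to both the bracket $[\ ,\ ]$ and the product $\cdot$ on $A \ltimes_{-\ad^*, L^*} A^*$.

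For (i), I would simply invoke the example recalled earlier in the paper, namely that $(A^*, -\ad^*, L^*)$ is the coadjoint representation of $(A, [\ ,\ ]_A, \cdot_A)$. By the preceding proposition on semi-direct product Poisson algebras, this immediately yields a Poisson algebra structure on $A \oplus A^*$ given by
\begin{align*}
	[a + x^*, b + y^*] &= [a, b]_A - \ad^*(a) y^* + \ad^*(b) x^*, \\
	(a + x^*) \cdot (b + y^*) &= a \cdot_A b + L^*(a) y^* + L^*(b) x^*.
\end{align*}
For (ii), symmetry of $\mathfrak{B}_d$ is immediate from its definition. Nondegeneracy follows from the standard observation that $\mathfrak{B}_d$ is the hyperbolic pairing between $A$ and $A^*$: if $a + x^*$ pairs to zero with every element of $A \oplus A^*$, then choosing $b \in A$ arbitrary and $y^* = 0$ forces $x^* = 0$, and choosing $b = 0$ with $y^*$ arbitrary forces $a = 0$.

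The main (still routine) step is (iii). I would compute both sides of each invariance identity and check equality. For the Lie bracket, expanding $\mathfrak{B}_d([a+x^*, b+y^*], c+z^*)$ and $\mathfrak{B}_d(a+x^*, [b+y^*, c+z^*])$ using the explicit formulas above reduces both to the symmetric expression
\begin{equation*}
	\langle [a,b]_A, z^*\rangle - \langle [a,c]_A, y^*\rangle + \langle [b,c]_A, x^*\rangle,
\end{equation*}
after unfolding the definition of $\ad^*$. Likewise, expanding both sides of $\mathfrak{B}_d((a+x^*)\cdot(b+y^*), c+z^*) = \mathfrak{B}_d(a+x^*,(b+y^*)\cdot(c+z^*))$ using the definition of $L^*$ and the commutativity of $\cdot_A$ reduces both sides to
\begin{equation*}
	\langle a \cdot_A b, z^*\rangle + \langle a \cdot_A c, y^*\rangle + \langle b \cdot_A c, x^*\rangle.
\end{equation*}

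There is no genuine obstacle: the only subtle point is keeping signs straight (the representation is $-\ad^*$, not $\ad^*$), and using the correct definition of the dual operators. Once those are tracked carefully, the two invariance identities hold by inspection and the proposition follows.
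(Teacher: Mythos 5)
Your proposal is correct and is exactly the ``straightforward computation'' that the paper's proof alludes to: identify the semi-direct product structure via the coadjoint representation $(A^*,-\ad^*,L^*)$, note that $\mathfrak{B}_d$ is the symmetric nondegenerate hyperbolic pairing, and verify both invariance identities by unfolding the definitions of $\ad^*$ and $L^*$ (the associative case indeed requiring commutativity of $\cdot_A$). The sign bookkeeping and the reduction of both sides to the stated symmetric expressions check out.
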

\begin{proof}
	It follows from a straightforward computation.
\end{proof}

Let $A$ be a vector space and $\mathfrak{B}$ be a nondegenerate  bilinear form. 
Denote by $I_\mathfrak{B}: A^* \to A$ the induced linear isomorphism defined by
\begin{align}
	\langle I_\mathfrak{B}^{-1}(a), b\rangle := \mathfrak{B}(a, b), \quad \forall a,b \in A. \label{eq:sxrbt1}
\end{align}
Moreover, denote by $r_\mathfrak{B}\in A \otimes A $ the 2-tensor form of $I_\mathfrak{B}$,  that is, 
\begin{equation}
	\langle r_\mathfrak{B}, x^* \otimes y^*\rangle := \langle   I_\mathfrak{B}(x^*),y^*\rangle, \quad \forall  x^*,y^* \in A^*. \label{eq:sxrbt2}
\end{equation}

\begin{proposition}\label{qparinv}
	Let $(A, [\ ,\ ]_A, \cdot_A)$ be a Poisson algebra and $\mathfrak{B}$ be a nondegenerate  bilinear form.  
	Let $I_\mathfrak{B}: A^* \to A$ be the induced linear isomorphism  by $\mathfrak{B}$ and $r_\mathfrak{B} \in A \otimes A$ be the 2-tensor form of $I_\mathfrak{B}$ given by Eq.~\eqref{eq:sxrbt2}. 
	Then $(A, [\ ,\ ]_A, \cdot_A,\mathfrak{B})$ is a quadratic  Poisson algebra if and only if $r_\mathfrak{B}\in A \otimes A$ is symmetric and $(\ad, L)$-invariant.
\end{proposition}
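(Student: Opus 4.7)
The strategy I would follow exploits the identity $(r_\mathfrak{B})_{+} = I_\mathfrak{B}$, which is built directly into Eq.~\eqref{eq:sxrbt2}. This lets me read off properties of the 2-tensor $r_\mathfrak{B}$ as properties of the operator $I_\mathfrak{B}$, and then transport them to properties of $\mathfrak{B}$ itself through the dictionary $\mathfrak{B}(a,b) = \langle I_\mathfrak{B}^{-1}(a), b\rangle$. The proof then splits naturally into two parts, one for symmetry and one for invariance.

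For the symmetry part, I would expand $\langle r_\mathfrak{B}, x^* \otimes y^*\rangle - \langle r_\mathfrak{B}, y^* \otimes x^*\rangle$ via Eq.~\eqref{eq:sxrbt2}, rewrite it in $\mathfrak{B}$-values using the dictionary, and note that the bijectivity of $I_\mathfrak{B}$ lets the pair $(I_\mathfrak{B}(x^*), I_\mathfrak{B}(y^*))$ range over all of $A \times A$. This gives at once the equivalence between symmetry of $r_\mathfrak{B}$ and symmetry of $\mathfrak{B}$.

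For the $(\ad, L)$-invariance, I would assume $r_\mathfrak{B}$ symmetric and apply Lemma~\ref{lem:adi}: since $r_\mathfrak{B}$ then equals its own symmetric part $S$ with $S_{+} = I_\mathfrak{B}$, invariance of $r_\mathfrak{B}$ is equivalent to Eqs.~\eqref{eq:adi1} and \eqref{eq:adi2} with $I_\mathfrak{B}$ in place of $S_{+}$. Setting $b = I_\mathfrak{B}(x^*)$ and pairing each of those two equations against an arbitrary $c \in A$ through $\mathfrak{B}$, the identity $\mathfrak{B}(I_\mathfrak{B}(\xi), c) = \langle \xi, c\rangle$ together with the definitions of $\ad^*$ and $L^*$ produces relations which, after combining with symmetry of $\mathfrak{B}$, antisymmetry of $[\ ,\ ]_A$, and commutativity of $\cdot_A$, become exactly the invariance axioms $\mathfrak{B}([a,b]_A, c) = \mathfrak{B}(a, [b,c]_A)$ and $\mathfrak{B}(a \cdot_A b, c) = \mathfrak{B}(a, b \cdot_A c)$. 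Bijectivity of $I_\mathfrak{B}$ ensures both implications.

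The main obstacle is essentially bookkeeping: one needs to be careful with the sign conventions for $\ad^*$ and $L^*$, and verify that the relations obtained after the $\mathfrak{B}$-pairing are genuinely equivalent to (rather than weaker than) the standard invariance axioms. No substantive new idea is required beyond Lemma~\ref{lem:adi} and the dictionary provided by $I_\mathfrak{B}$.
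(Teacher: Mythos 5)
Your proposal is correct and follows essentially the same route as the paper: both arguments translate the tensor conditions on $r_\mathfrak{B}$ into conditions on $\mathfrak{B}$ via the dictionary $\mathfrak{B}(a,b)=\langle I_\mathfrak{B}^{-1}(a),b\rangle$ and the surjectivity of $I_\mathfrak{B}$. The only cosmetic difference is that you pass through the operator form of $(\ad,L)$-invariance from Lemma~\ref{lem:adi} before pairing with $c$, whereas the paper pairs the tensor identities $(\ad(c)\otimes\id+\id\otimes\ad(c))r_\mathfrak{B}=0$ and $(L(c)\otimes\id-\id\otimes L(c))r_\mathfrak{B}=0$ directly against $x^*\otimes y^*$; the bookkeeping with symmetry of $\mathfrak{B}$, antisymmetry of the bracket and commutativity of the product that you flag does go through.
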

\begin{proof}
	Suppose that $(A, [\ ,\ ]_A, \cdot_A,\mathfrak{B})$ is a quadratic  Poisson algebra. 
	For all $a, b, c \in A$, there exists $x^*,y^*,z^* \in A^*$ such that $I_\mathfrak{B}(x^*) = a, I_\mathfrak{B}(y^*) = b$ and $I_\mathfrak{B}(z^*) = c$. 
	Then we have
	\begin{equation*}
		\langle  r_\mathfrak{B}-\tau(r_\mathfrak{B}), x^* \otimes y^*\rangle  = \langle   I_{\mathfrak{B}}(x^*), I_\mathfrak{B}^{-1}(b)\rangle  - \langle   I_{\mathfrak{B}}(y^*), I_\mathfrak{B}^{-1}(a)\rangle  = \mathfrak{B}(b,a) -  \mathfrak{B}(a,b) = 0.
	\end{equation*}
	Thus, $r_\mathfrak{B}$ is symmetric. 
	Moreover, we have
	\begin{align*}
		\langle(\ad(c) \otimes \id + \id \otimes \ad(c))(r_\mathfrak{B}), x^* \otimes y^*\rangle  &= \langle r_\mathfrak{B}, \ad^*(c)(x^*) \otimes y^*\rangle + \langle r_\mathfrak{B} , x^* \otimes \ad^*(c)(y^*)\rangle  \\
		&= \langle I_\mathfrak{B}(y^*), \ad^*(c)(x^*) \rangle + \langle I_\mathfrak{B}(x^*),  \ad^*(c)(y^*)\rangle  \\
		&= \mathfrak{B}( [c,b]_A,  a ) + \mathfrak{B}([c,a]_A,  b)=0, \\
		\langle	(L(c) \otimes \id - \id \otimes L(c))(r_\mathfrak{B}), x^* \otimes y^*\rangle &= \langle r_\mathfrak{B}, L^*(c)(x^*) \otimes y^*\rangle - \langle r_\mathfrak{B} , x^* \otimes L^*(c)(y^*)\rangle  \\
		&= \langle I_\mathfrak{B}(y^*), L^*(c)(x^*) \rangle - \langle I_\mathfrak{B}(x^*),  L^*(c)(y^*)\rangle  \\
		&= \mathfrak{B}( c \cdot_A b,  a ) - \mathfrak{B}( c \cdot_A  a ,  b)=0, 
	\end{align*}
	which shows that $r_\mathfrak{B}\in A \otimes A$ is $(\ad, L)$-invariant.  
	The converse statement could be proved by reversing the argument.
\end{proof}

The following theorem demonstrates that every quadratic Poisson algebra
naturally induces an isomorphism between the adjoint and coadjoint representations of the corresponding Poisson algebra.

\begin{definition}
	Let $(A,[\ ,\ ]_A, \cdot_A)$ be a Poisson algebra, and $(V_1, \rho_1, \mu_1)$ and $(V_2, \rho_2, \mu_2)$ be two representations of $(A,[\ ,\ ]_A,  \cdot_A)$. 
	A \textbf{homomorphism of representations} from $(V_1$, $\rho_1$, $\mu_1)$ to $(V_2$, $\rho_2$, $\mu_2)$ is a linear map $\varphi: V_1 \to V_2$ such that 
	\begin{equation*}
		\varphi \circ \rho_1(a) = \rho_2(a) \circ \varphi, \;
		\varphi \circ \mu_1(a) = \mu_2(a) \circ \varphi, \; \forall a \in A.
	\end{equation*}
	In particular, when $\varphi: V_1 \to V_2$ is a linear isomorphism of vector spaces, $\varphi$ is called an \textbf{isomorphism of representations} from $(V_1, \rho_1, \mu_1)$ to $(V_2, \rho_2, \mu_2)$.
\end{definition}

\begin{theorem}\label{pdualdengjia}
	Let $(A, [\ ,\ ]_A, \cdot_A)$ be a Poisson algebra.   
	If there is a bilinear form $\mathfrak{B}$ such that $(A, [\ ,\ ]_A, \cdot_A, \mathfrak{B})$ is a quadratic Poisson algebra, then the linear map $I_\mathfrak{B}^{-1}: A \to A^*$ defined by $\langle I_\mathfrak{B}^{-1}(a), b\rangle = \mathfrak{B}(a, b)$ 
	is an isomorphism from the adjoint representation $(A, \ad, L)$ to the coadjoint representation $(A^*, -\ad^*, L^*)$.
	
	Conversely, if $I^{-1}: A \to A^*$  is an isomorphism from the adjoint representation $(A, \ad, L)$ to the coadjoint representation $(A^*, -\ad^*, L^*)$, then the bilinear form $\mathfrak{B}$ defined by $\mathfrak{B}(a, b) = \langle I^{-1}(a), b\rangle $ is nondegenerate invariant on $(A, [\ ,\ ]_A, \cdot_A)$.
\end{theorem}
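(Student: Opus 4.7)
The plan is to prove each direction by unpacking the definitions of the dual operators and pairing the candidate identities with an arbitrary third element $c\in A$. No deep tool is needed beyond the definitions of $\ad^*$, $L^*$, $\mathfrak{B}^\sharp$, the antisymmetry of $[\ ,\ ]_A$, and the commutativity of $\cdot_A$.

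For the forward direction, since $\mathfrak{B}$ is nondegenerate, $\mathfrak{B}^\sharp$ is automatically a linear isomorphism from $A$ to $A^*$. To see that it is an isomorphism of representations, I would verify
\begin{equation*}
\mathfrak{B}^\sharp\circ \ad(a)=-\ad^*(a)\circ \mathfrak{B}^\sharp,\qquad \mathfrak{B}^\sharp\circ L(a)=L^*(a)\circ \mathfrak{B}^\sharp
\end{equation*}
by pairing each identity with an arbitrary $c\in A$. The Lie case reduces, after using antisymmetry of the bracket, to $\mathfrak{B}([a,b]_A,c)=\mathfrak{B}(a,[b,c]_A)$, which is the hypothesis. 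The multiplicative case uses commutativity of $\cdot_A$ to identify $L(b)a$ with $a\cdot_A b$, after which the invariance hypothesis $\mathfrak{B}(a\cdot_A b,c)=\mathfrak{B}(a,b\cdot_A c)$ closes the computation.

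For the converse, injectivity of $I$ shows that $\mathfrak{B}(a,b)=\langle I(a),b\rangle$ is nondegenerate: if $\mathfrak{B}(a,\cdot)\equiv 0$ then $I(a)=0$ hence $a=0$. The two invariance identities are obtained by the reverse manipulation. For the Lie piece I would write
\begin{equation*}
\mathfrak{B}(a,[b,c]_A)=\langle I(a),\ad(b)c\rangle=\langle \ad^*(b)I(a),c\rangle,
\end{equation*}
and then use the intertwining property $\ad^*(b)\circ I=-I\circ \ad(b)$ together with antisymmetry of the bracket to rewrite $\ad^*(b)I(a)=-I([b,a]_A)=I([a,b]_A)$, yielding $\mathfrak{B}(a,[b,c]_A)=\mathfrak{B}([a,b]_A,c)$. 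The multiplicative invariance follows from the analogous chain
\begin{equation*}
\mathfrak{B}(a,b\cdot_A c)=\langle L^*(b)I(a),c\rangle=\langle I(b\cdot_A a),c\rangle=\langle I(a\cdot_A b),c\rangle=\mathfrak{B}(a\cdot_A b,c),
\end{equation*}
where commutativity of $\cdot_A$ is used at the penultimate step.

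There is no genuine obstacle here; the only point that requires a little care is the sign bookkeeping in the converse, where the $-I\circ\ad(b)$ produced by the intertwining is converted into $+I\circ\ad(a)$ only after invoking antisymmetry of $[\ ,\ ]_A$. It is worth noting that symmetry of $\mathfrak{B}$ is never used in either direction, which is consistent with the theorem asking only for ``nondegenerate invariant'' in the converse rather than the full quadratic condition.
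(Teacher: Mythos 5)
Your proof is correct and follows essentially the same route as the paper: pair the intertwining identities $\mathfrak{B}^\sharp\circ\ad(a)=-\ad^*(a)\circ\mathfrak{B}^\sharp$ and $\mathfrak{B}^\sharp\circ L(a)=L^*(a)\circ\mathfrak{B}^\sharp$ with an arbitrary $c\in A$ and reduce to invariance via antisymmetry of the bracket and commutativity of the product, with the converse obtained by reversing the manipulation (the paper only sketches this direction as ``similar''). Your side remark that symmetry of $\mathfrak{B}$ is never used is accurate and consistent with the converse asserting only nondegeneracy and invariance.
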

\begin{proof}
	Suppose that $(A, [\ ,\ ]_A, \cdot_A, \mathfrak{B})$ is a quadratic Poisson algebra. 
	Then for all $a, b, c \in A$, we have
	\begin{align*}
		\langle I_\mathfrak{B}^{-1} \ad(a)b, c\rangle &= -\mathfrak{B}([b, a]_A, c) = -\mathfrak{B}(b, [a, c]_A) = \langle -\ad^*(a) I_\mathfrak{B}^{-1}(b), c\rangle, \\
		\langle I_\mathfrak{B}^{-1} L(a) b, c\rangle &= \mathfrak{B}(a \cdot_A b, c)  = \mathfrak{B}(b \cdot_A a, c)  = \mathfrak{B}(b, a \cdot_A c)  = \langle L^*(a) I_\mathfrak{B}^{-1} (b), c\rangle,
	\end{align*}
	that is,
	\begin{equation*}
		I_\mathfrak{B}^{-1} \ad(a) = -\ad^*(a) I_\mathfrak{B}^{-1}, \;\;
		I_\mathfrak{B}^{-1} L(a) = L^*(a) I_\mathfrak{B}^{-1}.
	\end{equation*}
	Therefore, $I_\mathfrak{B}^{-1}: A \to A^*$ is an isomorphism from the adjoint representation $(A, \ad, L)$ to the coadjoint representation $(A^*, -\ad^*, L^*)$. 
	The converse could be proved by a similar argument as above.
\end{proof}

We conclude this section with a proposition that is highly valuable for obtaining Rota-Baxter characterizations of factorizable Poisson bialgebras in Section~\ref{sec:rbcfpb}.

\begin{proposition}\label{pro:pybe2h}
	Let $(A, [\ ,\ ]_A, \cdot_A,\mathfrak{B})$ be a quadratic Poisson algebra and $I_\mathfrak{B}: A^* \to A$ be the induced linear isomorphism by $\mathfrak{B}$. 
	Suppose that $ r \in A \otimes A$.
	Then $r$ is a solution of the Poisson Yang-Baxter equation in $(A, [\ ,\ ]_A, \cdot_A)$ if and only if the linear map $P:= r_{+} \circ I_\mathfrak{B}^{-1}: A \to A$ satisfies the following equations:
	\begin{align}
		[P(a),P(b)]_A &= P\Big([P(a),b]_A + [a,P(b)]_A -  [a, (I_{r} \circ I_\mathfrak{B}^{-1})(b)]_A\Big),\\
		P(a) \cdot_A P(b) &= P\Big(P(a) \cdot_A b + a \cdot_A  P(b)  -  a \cdot_A (I_{r} \circ I_\mathfrak{B}^{-1})(b) \Big), \quad \forall a,b \in A.
	\end{align}
\end{proposition}
\begin{proof} 
By Theorem~\ref{pdualdengjia}, we have
	\begin{align*}
	I_\mathfrak{B}^{-1} \ad(a) &= -\ad^*(a) I_\mathfrak{B}^{-1},\quad 
		I_\mathfrak{B}^{-1} L(a)  = L^*(a) I_\mathfrak{B}^{-1},\quad \forall a \in A.
	\end{align*}
	For all $a,b \in A$, there exists $x^*,y^* \in A^*$ such that $I_\mathfrak{B}(x^*) = a$ and $I_\mathfrak{B}(y^*) = b$.  Then  we have
	\begin{align*}
		[P(a),P(b)]_A  &= [(r_{+} \circ I_\mathfrak{B}^{-1})(a),(r_{+} \circ I_\mathfrak{B}^{-1})(b)]_A = [ r_{+}  (x^*),  r_{+}  (y^*)]_A,\\
		P ([P(a),b]_A) &= (r_{+} \circ I_\mathfrak{B}^{-1})([(r_{+} \circ I_\mathfrak{B}^{-1})(a),b]_A) = (r_{+} \circ I_\mathfrak{B}^{-1})(\ad(r_{+} (x^*))I_\mathfrak{B}(y^*)) \\
		& =- r_{+} (\ad^*(r_{+} (x^*))  y^* ),\\
		P ([a,P(b)]_A) &= (r_{+} \circ I_\mathfrak{B}^{-1})([a,(r_{+} \circ I_\mathfrak{B}^{-1})(b)]_A) = -(r_{+} \circ I_\mathfrak{B}^{-1})(\ad( r_{+} (y^*))I_\mathfrak{B}(x^*)) \\
		& =  r_{+} (\ad^*(r_{+} (y^*))  x^* ),\\
		- P ([a, (I_{r} \circ I_\mathfrak{B}^{-1})(b)]_A) &= - (r_{+} \circ I_\mathfrak{B}^{-1})([a,(I_{r} \circ I_\mathfrak{B}^{-1})(b)]_A) =   (r_{+} \circ I_\mathfrak{B}^{-1})(\ad( I_{r} (y^*))I_\mathfrak{B}(x^*))  \\
		&= - r_{+} (\ad^*(I_{r} (y^*))  x^* ),\\
		P(a) \cdot_A P(b) &= (r_{+} \circ I_\mathfrak{B}^{-1})(a) \cdot_A (r_{+} \circ I_\mathfrak{B}^{-1})(b) = r_{+}(x^*)\cdot_A r_{+}(y^*),\\
		P( P(a)\cdot_A b) &= (r_{+} \circ I_\mathfrak{B}^{-1})( (r_{+} \circ I_\mathfrak{B}^{-1})(a)\cdot_A  b)= (r_{+} \circ I_\mathfrak{B}^{-1})(L(r_{+}(x^*))I_\mathfrak{B}(y^*) ) \\
		&=  r_{+} (L^*(r_{+}(x^*)) y^* ),\\
		P(a \cdot_A P(b)) &= (r_{+} \circ I_\mathfrak{B}^{-1})(a \cdot_A (r_{+} \circ I_\mathfrak{B}^{-1})(b) )= (r_{+} \circ I_\mathfrak{B}^{-1})(L(r_{+}(y^*))I_\mathfrak{B}(x^*)) \\
		&=  r_{+} (L^*(r_{+}(y^*))x^* ),\\
		-  P ( a \cdot_A (I_{r} \circ I_\mathfrak{B}^{-1})(b) ) &= -(r_{+} \circ I_\mathfrak{B}^{-1})(a \cdot_A (I_{r} \circ I_\mathfrak{B}^{-1})(b) )=- (r_{+} \circ I_\mathfrak{B}^{-1})(L(I_{r}(y^*))I_\mathfrak{B}(x^*)) \\  
		&= - r_{+} (L^*(I_{r}(y^*))x^* ).
	\end{align*}
	By Eq.~\eqref{eq:i} and Lemma~\ref{Dj:randtr}, the conclusion follows.
\end{proof}

\section{Factorizable Poisson bialgebras}\label{sec:fpb}
In this section, we introduce the notion of a factorizable Poisson bialgebra, which is a special quasi-triangular Poisson bialgebra satisfying that the map $I_r: A^* \to A$ is a linear isomorphism of vector spaces.
We show that the Drinfeld classical double of a Poisson bialgebra is naturally a factorizable Poisson bialgebra.

\begin{definition}
	A quasi-triangular Poisson bialgebra $(A, [\ ,\ ]_A, \cdot_A, \delta_r, \Delta_r)$ is called \textbf{factorizable} if the symmetric part $S$ of $r$ is nondegenerate, which means that the linear map $I_r: A^* \to A$ defined by Eq.~\eqref{eq:i} is a linear isomorphism of vector spaces.
\end{definition}

\begin{proposition}\label{xtquasipba}
	Let $(A, [\ ,\ ]_A, \cdot_A)$ be a Poisson algebra and $r \in A \otimes A$. 
	Then $(A, [\ ,\ ]_A, \cdot_A, \delta_r, \Delta_r)$ is a factorizable Poisson bialgebra if and only if $(A, [\ ,\ ]_A, \cdot_A, \delta_{\tau(r)}, \Delta_{\tau(r)})$ is a factorizable Poisson bialgebra.
\end{proposition}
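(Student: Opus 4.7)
The plan is to reduce this statement to the quasi-triangular version that has already been established in Proposition~\ref{tquasipba}, together with the observation that the map $I_r$ is invariant under the flip $\tau$.

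First I would recall that by Proposition~\ref{tquasipba}, the coboundary Poisson bialgebra $(A, [\ ,\ ]_A, \cdot_A, \delta_r, \Delta_r)$ is quasi-triangular if and only if $(A, [\ ,\ ]_A, \cdot_A, \delta_{\tau(r)}, \Delta_{\tau(r)})$ is quasi-triangular. So it only remains to show that the nondegeneracy condition on the symmetric part of $r$ is preserved under $\tau$.

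Next I would use the identities $(\tau(r))_{+} = -r_{-}$ and $(\tau(r))_{-} = -r_{+}$, which are recorded in the paragraph preceding Lemma~\ref{lem:adi}. From Eq.~\eqref{eq:i}, this gives
\begin{equation*}
    I_{\tau(r)} \;=\; (\tau(r))_{+} - (\tau(r))_{-} \;=\; -r_{-} + r_{+} \;=\; I_r.
\end{equation*}
Equivalently, if we write $r = S + \Lambda$ with $S \in \mathrm{Sym}^2(A)$ and $\Lambda \in \mathrm{Alt}^2(A)$, then $\tau(r) = S - \Lambda$, so $r$ and $\tau(r)$ share the same symmetric part $S$. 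Hence $S$ is nondegenerate (equivalently, $I_r$ is a linear isomorphism) if and only if the symmetric part of $\tau(r)$ is nondegenerate (equivalently, $I_{\tau(r)}$ is a linear isomorphism).

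Combining these two observations yields the stated equivalence. There is no real obstacle here: once Proposition~\ref{tquasipba} is applied, the entire content of the proof is the elementary identity $I_{\tau(r)} = I_r$, which follows immediately from the definitions of $r_{\pm}$ and of the symmetric part of a 2-tensor.
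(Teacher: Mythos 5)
Your proposal is correct and follows essentially the same route as the paper, which simply cites Proposition~\ref{tquasipba}; your additional observation that $I_{\tau(r)} = I_r$ (so that nondegeneracy of the symmetric part is preserved) is exactly the detail the paper leaves implicit, having already recorded $S_{+} = \frac{1}{2}I_r = \frac{1}{2}I_{\tau(r)}$ just after Eq.~\eqref{eq:i}.
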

\begin{proof}
	It follows from Proposition~\ref{tquasipba}.
\end{proof}

Consider the map 
\begin{equation*}
	A^* \xrightarrow{\;\;\;\;r_{+} \oplus r_{-}\;\;\;\;} A \oplus A \xrightarrow{(a, b) \mapsto a - b} A.
\end{equation*}

The following result justifies the terminology of a factorizable Poisson bialgebra.
\begin{proposition}\label{prop:fpt}
	Let $(A, [\ ,\ ]_A, \cdot_A, \delta_r, \Delta_r)$ be a factorizable Poisson bialgebra.
	Then $\mathrm{Im}(r_{+} \oplus r_{-})$ is a Poisson subalgebra of the direct sum Poisson algebra $A \oplus A$, 
	which is isomorphic to the Poisson algebra $(A^*, [\ ,\ ]_r, \cdot_r)$, where $[\ ,\ ]_r, \cdot_r: A^* \otimes A^* \to A^*$ are respectively defined by Eqs.~\eqref{eq:pcbdr1} and \eqref{eq:pcbdr2}.
	Moreover, any $a \in A$ has a unique decomposition 
	$a = a_{+} - a_{-}$ with $(a_{+}, a_{-}) \in \mathrm{Im}(r_{+} \oplus r_{-})$.
\end{proposition}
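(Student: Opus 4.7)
The plan is to reduce everything to Theorem~\ref{thm:pybe2h} and the invertibility of $I_r$. First I would consider the combined map $r_{+} \oplus r_{-}: A^* \to A \oplus A$ sending $x^* \mapsto (r_{+}(x^*), r_{-}(x^*))$. Since $(A, [\ ,\ ]_A, \cdot_A, \delta_r, \Delta_r)$ is a quasi-triangular Poisson bialgebra, $r$ satisfies the Poisson Yang-Baxter equation and its symmetric part is $(\ad, L)$-invariant, so Theorem~\ref{thm:pybe2h} applies to give that $r_{+}$ and $r_{-}$ are Poisson algebra homomorphisms from $(A^*, [\ ,\ ]_r, \cdot_r)$ to $(A, [\ ,\ ]_A, \cdot_A)$. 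Consequently $r_{+} \oplus r_{-}$ is a Poisson algebra homomorphism into the direct sum Poisson algebra $A \oplus A$, and its image $\mathrm{Im}(r_{+} \oplus r_{-})$ is automatically a Poisson subalgebra of $A \oplus A$.

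Next I would use the factorizability hypothesis to show that $r_{+} \oplus r_{-}$ is injective. Indeed, if $(r_{+}(x^*), r_{-}(x^*)) = (0, 0)$ for some $x^* \in A^*$, then by Eq.~\eqref{eq:i} we have $I_r(x^*) = r_{+}(x^*) - r_{-}(x^*) = 0$, and since $I_r$ is a linear isomorphism by the factorizability assumption, $x^* = 0$. Combined with the first step, this shows that $r_{+} \oplus r_{-}$ is a Poisson algebra isomorphism from $(A^*, [\ ,\ ]_r, \cdot_r)$ onto $\mathrm{Im}(r_{+} \oplus r_{-})$.

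For the decomposition statement, given $a \in A$, the invertibility of $I_r$ produces a unique $x^* \in A^*$ with $I_r(x^*) = a$. Setting $a_{+} := r_{+}(x^*)$ and $a_{-} := r_{-}(x^*)$ yields $(a_{+}, a_{-}) \in \mathrm{Im}(r_{+} \oplus r_{-})$ together with $a = a_{+} - a_{-}$. For uniqueness, suppose $a = b_{+} - b_{-}$ with $(b_{+}, b_{-}) \in \mathrm{Im}(r_{+} \oplus r_{-})$; then $b_{\pm} = r_{\pm}(y^*)$ for some $y^* \in A^*$ (this $y^*$ being unique by the injectivity of $r_{+} \oplus r_{-}$), so $I_r(y^*) = a = I_r(x^*)$ forces $y^* = x^*$ and hence $b_{\pm} = a_{\pm}$.

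There is no real obstacle; all three assertions essentially package the content of Theorem~\ref{thm:pybe2h} together with the defining property of factorizability. The only thing worth being careful about is making sure that the injectivity needed for the decomposition is the injectivity of $I_r = r_{+} - r_{-}$ rather than of $r_{+}$ or $r_{-}$ individually, which is exactly what factorizability supplies.
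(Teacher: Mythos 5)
Your proposal is correct and follows essentially the same route as the paper: invoke Theorem~\ref{thm:pybe2h} to get that $r_{+}$ and $r_{-}$ are Poisson algebra homomorphisms (hence the image is a Poisson subalgebra), and use the invertibility of $I_r = r_{+}-r_{-}$ for both the isomorphism onto the image and the existence and uniqueness of the decomposition. Your explicit injectivity check for $r_{+}\oplus r_{-}$ is a small detail the paper leaves implicit, but the argument is the same.
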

\begin{proof}
	By Theorem~\ref{thm:pybe2h}, both $r_{+}$ and $r_{-}$ are Poisson algebra homomorphisms.
	Therefore, $\mathrm{Im}(r_{+} \oplus r_{-})$ is a Poisson subalgebra of the direct sum Poisson algebra $A \oplus A$.
	Since $I_r = r_{+} - r_{-}$ is a linear isomorphism of vector spaces, it follows that the Poisson algebra $\mathrm{Im}(r_{+} \oplus r_{-})$ is isomorphic to the Poisson algebra $(A^*, [\ ,\ ]_r, \cdot_r)$.
	Moreover, we have
	\begin{equation*}
		r_{+}I_r^{-1}(a) - r_{-}I_r^{-1}(a) = (r_{+} - r_{-}) I_r^{-1}(a)= a,
	\end{equation*} 
	which shows that $a = a_{+} - a_{-}$ with $a_{+} = r_{+}I_r^{-1}(a)$ and $a_{-} = r_{-}I_r^{-1}(a)$.
	The uniqueness also follows from the fact that $I_r: A^* \to A$ is a linear isomorphism of vector spaces.
\end{proof}

Let $((A, [\ ,\ ]_A, \cdot_A), (A^*, [\ ,\ ]_{A^*}, \cdot_{A^*}))$ be an arbitrary Poisson bialgebra.
Define a bracket and a multiplication on $\mathfrak{A} = A \oplus A^*$ by
\begin{align*}
	[(a, x^*), (b, y^*)]_\mathfrak{A} &= ([a, b]_A - \ad_{[,]_{A^*}}^*(x^*)b + \ad_{[,]_{A^*}}^*(y^*)a, [x^*, y^*]_{A^*} - \ad_{[,]_A}^*(a)y^* + \ad_{[,]_A}^*(b)x^*), \\
	(a, x^*) \cdot_\mathfrak{A} (b, y^*) &= (a \cdot_A b + L_{\cdot_{A^*}}^*(x^*)b + L_{\cdot_{A^*}}^*(y^*)a, x^* \cdot_{A^*} y^* + L_{\cdot_{A}}^*(a)y^* + L_{\cdot_{A}}^*(b)x^*), 
\end{align*}
where $a, b \in A, x^*, y^* \in A^*$.
Then $(\mathfrak{A}, [\ ,\ ]_\mathfrak{A}, \cdot_\mathfrak{A})$ is a Poisson algebra, which is called the \textbf{Drinfeld classical double} of the Poisson bialgebra. 
Moreover, let $\{e_1, e_2, \cdots, e_n\}$ be a basis of $A$ and $\{e_1^*, e_2^*, \cdots, e_n^*\}$ be the dual basis of $A^*$.
Then 
\begin{equation*}
	r = \sum_{i}e_i \otimes e_i^* \in A \otimes A^* \subset \mathfrak{A} \otimes \mathfrak{A}
\end{equation*}
induces a coboundary Poisson bialgebra structure $(\mathfrak{A}, [\ ,\ ]_\mathfrak{A}, \cdot_\mathfrak{A}, \delta_r, \Delta_r)$ (see \cite[Theorem~3]{ni2013poisson} for more details).

\begin{theorem}\label{thm:spfb}
	The Poisson bialgebra $(\mathfrak{A}, [\ ,\ ]_\mathfrak{A}, \cdot_\mathfrak{A}, \delta_r, \Delta_r)$ is a quasi-triangular Poisson bialgebra. 
	Furthermore, it is factorizable.
\end{theorem}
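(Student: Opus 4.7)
To prove the theorem, I need to verify three claims: (a) the symmetric part $S$ of $r$ is $(\ad, L)$-invariant in $\mathfrak{A}$; (b) $r$ solves the Poisson Yang-Baxter equation in $\mathfrak{A}$, i.e.\ $\bm{C}(r)=\bm{A}(r)=0$; and (c) the induced map $I_r:\mathfrak{A}^*\to\mathfrak{A}$ is a linear isomorphism. The unifying idea is to exploit the canonical nondegenerate symmetric bilinear form $\mathfrak{B}_d$ on $\mathfrak{A}$ defined by Eq.~\eqref{abm}, together with the fact that the Lie and associative parts of $\mathfrak{A}$ are themselves the Drinfeld doubles of $(A,A^*)$ as a Lie bialgebra and an infinitesimal bialgebra, respectively.

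For (a), the first step is to check that $\mathfrak{B}_d$ is invariant on $(\mathfrak{A},[\ ,\ ]_\mathfrak{A},\cdot_\mathfrak{A})$; this is a routine substitution into the bracket and multiplication formulas, using the Lie and infinitesimal bialgebra axioms relating $(A,A^*)$ (together with the Poisson bialgebra compatibility). Thus $(\mathfrak{A},[\ ,\ ]_\mathfrak{A},\cdot_\mathfrak{A},\mathfrak{B}_d)$ is a quadratic Poisson algebra, and a short calculation in the dual basis $\{e_i\}\cup\{e_i^*\}$ shows that $2S$ coincides with the 2-tensor $r_{\mathfrak{B}_d}$ of Eq.~\eqref{eq:sxrbt2}. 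Proposition~\ref{qparinv} then yields the $(\ad,L)$-invariance of $S$. For (c), I work in the identification $\mathfrak{A}^*=A^*\oplus A$ and compute directly from the definition of $r_\pm$ that $r_+((\alpha,c))=(0,\alpha)$ and $r_-((\alpha,c))=(-c,0)$, giving $I_r((\alpha,c))=(c,\alpha)$, which is visibly an isomorphism of vector spaces.

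The essential content lies in (b), which splits cleanly: $\bm{C}(r)$ only involves $[\ ,\ ]_\mathfrak{A}$, and $\bm{A}(r)$ only involves $\cdot_\mathfrak{A}$. The condition $\bm{C}(r)=0$ is precisely the classical statement that the canonical $r$-matrix of the Drinfeld double of a Lie bialgebra satisfies CYBE, a well-known result going back to Drinfeld. The condition $\bm{A}(r)=0$ is the analogous statement for (antisymmetric) infinitesimal bialgebras, established in \cite{bai2010double}. These two classical facts together yield PYBE in $\mathfrak{A}$, finishing the quasi-triangularity and hence, combined with (c), the factorizability. The main obstacle is step (b): if a self-contained proof is required (rather than invoking the Lie and associative Drinfeld double results), one must expand $\bm{C}(r)$ and $\bm{A}(r)$ in the basis $\{e_i\}\cup\{e_i^*\}$ and match each resulting trilinear term against the Lie/infinitesimal/mixed compatibility axioms of the Poisson bialgebra $((A,A^*))$—a calculation that is lengthy but essentially mechanical.
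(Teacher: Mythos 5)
Your proposal is correct and establishes exactly the three facts the paper verifies (the $(\ad,L)$-invariance of $S$, the Poisson Yang--Baxter equation for $r$, and the invertibility of $I_r$), but it reaches the first two by a noticeably different route. For the invariance of $S$, the paper computes directly: it writes out $S_{+}(y^*,b)=\tfrac{1}{2}(b,y^*)$, expands $[(a,x^*),S_{+}(y^*,b)]_{\mathfrak A}$, $\ad^*_{\mathfrak A}$, $\cdot_{\mathfrak A}$ and $L^*_{\mathfrak A}$ from the double formulas, and checks the two identities of Lemma~\ref{lem:adi}(\ref{pbinv:2}). You instead pass through the quadratic structure: you observe that $\mathfrak{B}_d$ is invariant on the double (so $(\mathfrak A,[\ ,\ ]_{\mathfrak A},\cdot_{\mathfrak A},\mathfrak{B}_d)$ is a quadratic Poisson algebra), identify $2S=r_{\mathfrak{B}_d}$, and invoke Proposition~\ref{qparinv}. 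This is the Manin-triple point of view; it is more conceptual and reuses machinery already in the paper, at the cost of one extra verification (invariance of $\mathfrak{B}_d$) that the paper's direct computation folds into the same amount of work. One small correction: that invariance follows purely from the coadjoint-action form of $[\ ,\ ]_{\mathfrak A}$ and $\cdot_{\mathfrak A}$ and does not actually need the Lie/infinitesimal bialgebra compatibility axioms (those are only needed for $\mathfrak A$ to be a Poisson algebra in the first place, which is given). For the Yang--Baxter part, the paper cites a single result (\cite[Theorem~3]{ni2013poisson}) asserting PYBE for $r$ in the double, whereas you split it into $\bm{C}(r)=0$ (Drinfeld's CYBE on the Lie double) and $\bm{A}(r)=0$ (the AYBE on the associative double from \cite{bai2010double}); this decomposition is legitimate since $\bm{C}$ and $\bm{A}$ involve only the respective operations, though it implicitly uses that the associative part of a (commutative, cocommutative) Poisson bialgebra is an ASI bialgebra in the sense required by \cite{bai2010double}, a point worth a sentence if you write this up. The computation of $r_{\pm}$ and $I_r$ is identical to the paper's.
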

\begin{proof}
	First, we need to verify that the symmetric part $S = \frac{1}{2}(e_i \otimes e_i^* + e_i^* \otimes e_i)$ of $r$ is $(\ad_{[,]_\mathfrak{A}}, L_{\cdot_\mathfrak{A}})$-invariant.
	For all $(x^*, a) \in \mathfrak{A}^*$, we have $S_{+}(x^*, a) = \frac{1}{2}(a, x^*) \in \mathfrak{A}$.
	Moreover, by a straightforward computation, we have 
	\begin{align*}
		[(a, x^*), S_{+}(y^*, b)]_\mathfrak{A} &= \frac{1}{2}([a, b]_A - \ad_{[,]_{A^*}}^*(x^*)b + \ad_{[,]_{A^*}}^*(y^*)a, [x^*, y^*]_{A^*} - \ad_{[,]_{A}}^*(a)y^* + \ad_{[,]_{A}}^*(b)x^*), \\
		\ad^*_{[,]_{\mathfrak{A}}}(a, x^*)(y^*, b) &=-([x^*, y^*]_{A^*} - \ad_{[,]_{A}}^*(a)y^* + \ad_{[,]_{A}}^*(b)x^*, [a, b]_A - \ad_{[,]_{A^*}}^*(x^*)b + \ad_{[,]_{A^*}}^*(y^*)a), \\
		(a, x^*) \cdot_\mathfrak{A} S_{+}(y^*, b) &= \frac{1}{2}(a \cdot_A b + L_{\cdot_{A^*}}^*(x^*)b + L_{\cdot_{A^*}}^*(y^*)a, x^* \cdot_{A^*} y^* + L_{\cdot_{A}}^*(a)y^* + L_{\cdot_{A}}^*(b)x^*), \\
		L^*_{\cdot_\mathfrak{A}}(a, x^*)(y^*, b) &= (x^* \cdot_{A^*} y^* + L_{\cdot_{A}}^*(a)y^* + L_{\cdot_{A}}^*(b)x^*, a \cdot_A b + L_{\cdot_{A^*}}^*(x^*)b + L_{\cdot_{A^*}}^*(y^*)a). 
	\end{align*}
	Thus, we have
	\begin{align*}
		S_{+}(\ad^*_{[,]_\mathfrak{A}}(a, x^*)(y^*, b))  + [(a, x^*), S_{+}(y^*, b)]_\mathfrak{A} &= 0, \\
		S_{+}(L^*_{\cdot_\mathfrak{A}}(a, x^*)(y^*, b)) - (a, x^*) \cdot_\mathfrak{A} S_{+}(y^*, b) &= 0.
	\end{align*}
	Therefore, by Lemma~\ref{lem:adi}, 
	the symmetric part $S$ of $r$ is $(\ad_{[,]_\mathfrak{A}}, L_{\cdot_\mathfrak{A}})$-invariant.
	On the other hand, by \cite[Theorem~3]{ni2013poisson}, $r$ is a solution of the Poisson Yang-Baxter equation in $(\mathfrak{A}, [\ ,\ ]_\mathfrak{A}, \cdot_\mathfrak{A})$.
	Hence, the Poisson bialgebra $(\mathfrak{A}, [\ ,\ ]_\mathfrak{A}, \cdot_\mathfrak{A}, \delta_r, \Delta_r)$ is a quasi-triangular Poisson bialgebra. 
	
	Finally, we have to show $I_r$ is a linear isomorphism of vector spaces. Note that $I_r(x^*, a) = 2S_+(x^*, a) = (a, x^*)$, which implies that the linear map $I_r: \mathfrak{A}^* \to \mathfrak{A}$ is a linear isomorphism of vector spaces.  Therefore, the Poisson bialgebra $(\mathfrak{A}, [\ ,\ ]_\mathfrak{A}, \cdot_\mathfrak{A}, \delta_r, \Delta_r)$ is factorizable.
	{}
\end{proof}
\section{Quadratic Rota-Baxter Poisson algebras}\label{sec:rbcfpb}
In this section, we introduce the notion of a quadratic Rota-Baxter Poisson algebra and show that a quadratic Rota-Baxter Poisson algebra of zero weight induces a triangular Poisson bialgebra. 
Moreover, we show that there is a one-to-one correspondence between factorizable Poisson bialgebras and quadratic Rota-Baxter Poisson algebras of nonzero weights.

Recall that a linear map $P: A \to A$ is called a \textbf{Rota-Baxter operator of weight $\lambda$} on a Poisson algebra $(A, [\ ,\ ]_A, \cdot_A)$ if 
\begin{align*}
	[P(a), P(b)]_A &= P([P(a), b]_A + [a, P(b)]_A + \lambda [a, b]_A), \\
	P(a) \cdot_A P(b) &= P(P(a) \cdot_A b + a \cdot_A P(b) + \lambda a \cdot_A b), 
	\;\; \forall a, b \in A.
\end{align*}
A \textbf{Rota-Baxter Poisson algebra $(A, [\ ,\ ]_A, \cdot_A, P)$ of weight $\lambda$} is a Poisson algebra $(A, [\ ,\ ]_A, \cdot_A)$ equipped with a Rota-Baxter operator $P$ of weight $\lambda$.

\begin{lemma}
	Let $(A, [\ ,\ ]_A, \cdot_A, P)$ be a Rota-Baxter Poisson algebra of weight $\lambda$. 
	Then $(A, [\ ,\ ]_P, \cdot_P)$ is a Poisson algebra, which is called the \textbf{descendent Poisson algebra} of $(A, [\ ,\ ]_A, \cdot_A, P)$, where $[\ ,\ ]_P, \cdot_P: A \otimes A \to A$ are respectively defined by 
	\begin{align}
		[a, b]_P &= [P(a), b]_A + [a, P(b)]_A + \lambda [a, b]_A, \label{eq:rdpal}\\
		a \cdot_P b &= P(a) \cdot_A b + a \cdot_A P(b) + \lambda a \cdot_A b, \;\; \forall a, b \in A. \label{eq:rdpac}
	\end{align}
	Furthermore, $P$ is a Poisson algebra homomorphism from $(A, [\ ,\ ]_P, \cdot_P)$ to $(A, [\ ,\ ]_A, \cdot_A)$.
\end{lemma}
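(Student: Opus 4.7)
The plan is to check the three defining axioms of a Poisson algebra for $(A, [\ ,\ ]_P, \cdot_P)$ and then verify the homomorphism property for $P$ directly from the Rota-Baxter axioms. The homomorphism claim is by far the easiest: applying $P$ to either of Eqs.~\eqref{eq:rdpal}–\eqref{eq:rdpac} and invoking the Rota-Baxter identity on the right-hand side yields $P([a,b]_P) = [P(a),P(b)]_A$ and $P(a \cdot_P b) = P(a) \cdot_A P(b)$, which is exactly what is needed.

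For the Lie algebra axioms on $[\ ,\ ]_P$, antisymmetry is immediate from the antisymmetry of $[\ ,\ ]_A$. For the Jacobi identity, I would expand $[[a,b]_P, c]_P + \text{cyclic}$ using Eq.~\eqref{eq:rdpal}, substitute $P([a,b]_P) = [P(a),P(b)]_A$ from the weight-$\lambda$ Rota-Baxter axiom, and then observe that every resulting term can be grouped into cyclic sums which vanish by the Jacobi identity of $[\ ,\ ]_A$. This is the standard computation showing that the descendent structure of a Rota-Baxter Lie algebra is again a Lie algebra; an analogous manipulation using Eq.~\eqref{eq:rdpac} together with the Rota-Baxter axiom for $\cdot_A$ shows that $\cdot_P$ is commutative and associative.

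The only genuinely new verification — and the step that I expect to be the main computational obstacle — is the Leibniz rule
\begin{equation*}
    [a, b \cdot_P c]_P \;=\; [a,b]_P \cdot_P c + b \cdot_P [a,c]_P.
\end{equation*}
My approach is to expand the left-hand side using Eqs.~\eqref{eq:rdpal}--\eqref{eq:rdpac}, then replace the inner occurrence $P(b \cdot_P c)$ by $P(b) \cdot_A P(c)$ via the Rota-Baxter axiom. This produces a sum in which $[\ ,\ ]_A$ is applied to products $\cdot_A$ of various combinations of $a,b,c$ and their $P$-images. Applying the Leibniz rule for the original Poisson algebra $(A,[\ ,\ ]_A,\cdot_A)$ to each such bracket, one obtains a long sum indexed by the choices (apply $P$ or not apply $P$) on each of the three arguments, weighted by appropriate powers of $\lambda$. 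On the right-hand side I would similarly expand $[a,b]_P \cdot_P c$ and $b \cdot_P [a,c]_P$, again using $P([a,b]_P) = [P(a),P(b)]_A$ and $P([a,c]_P) = [P(a),P(c)]_A$. A term-by-term comparison then matches the two sides; the bookkeeping of the $\lambda$-weighted terms is the only delicate part, and I would organise it by fixing where $P$ lands among the arguments.

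Finally, the homomorphism statement follows at once from the calculations already done: for all $a,b \in A$ one has $P([a,b]_P) = [P(a),P(b)]_A$ and $P(a \cdot_P b) = P(a) \cdot_A P(b)$ by the defining Rota-Baxter identities, so $P\colon (A,[\ ,\ ]_P,\cdot_P) \to (A,[\ ,\ ]_A,\cdot_A)$ is a Poisson algebra homomorphism.
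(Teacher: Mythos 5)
Your proposal is correct: the direct verification of the descendent Lie, commutative-associative, and Leibniz axioms together with the observation that the Rota-Baxter identities literally say $P([a,b]_P)=[P(a),P(b)]_A$ and $P(a\cdot_P b)=P(a)\cdot_A P(b)$ is exactly the computation the paper has in mind (its proof is simply stated as "straightforward"). In particular, your expansion of the Leibniz rule does close up term by term — after substituting $P(b\cdot_P c)=P(b)\cdot_A P(c)$ and applying the original Leibniz rule, the LHS groups into the four families $(\ )\cdot_A c$, $(\ )\cdot_A P(c)$, $P(b)\cdot_A(\ )$, $b\cdot_A(\ )$, which match $[a,b]_P\cdot_P c + b\cdot_P[a,c]_P$ exactly.
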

\begin{proof}
	The proof is straightforward.
\end{proof}

Equipping quadratic Poisson algebras with Rota-Baxter operators satisfying compatibility conditions, we introduce the notion of quadratic Rota-Baxter Poisson algebras.
\begin{definition}
	The triple $((A, [\ ,\ ]_A, \cdot_A), \mathfrak{B}, P)$ is called a \textbf{quadratic Rota-Baxter Poisson algebra of weight $\lambda$} if $(A, [\ ,\ ]_A, \cdot_A, \mathfrak{B})$ is a quadratic Poisson algebra and $(A, [\ ,\ ]_A, \cdot_A, P)$ is a Rota-Baxter Poisson algebra of weight $\lambda$ satisfying the following compatibility condition:
	\begin{equation}
		\mathfrak{B}(a, P(b)) + \mathfrak{B}(P(a), b) + \lambda \mathfrak{B}(a, b) = 0, \;\; \forall a, b \in A. \label{eq:qrbp}
	\end{equation}
\end{definition}

\begin{proposition}\label{PandtauP}
	Let $\left(A,[\ ,\ ]_A, \cdot_{A}, \mathfrak{B}\right)$ be a quadratic Poisson algebra and $P: A \rightarrow A$ be a linear map, then $\left((A,[\ ,\ ]_A,\cdot_{A}), \mathfrak{B}, P\right)$  is a quadratic Rota-Baxter Poisson algebra of weight $\lambda$ if and only if $((A,[\ ,\ ]_A,\cdot_{A}), \mathfrak{B},\tilde{P})$ is a quadratic Rota-Baxter Poisson algebra of weight $\lambda$, where $\tilde{P} := - \lambda \id - P$.
\end{proposition}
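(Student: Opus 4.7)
The plan is to show that the operation $P \mapsto \tilde{P} = -\lambda \id - P$ preserves both the Rota-Baxter axioms (of weight $\lambda$, on the Poisson algebra $(A, [\ ,\ ]_A, \cdot_A)$) and the compatibility condition \eqref{eq:qrbp}. Since $\tilde{\tilde{P}} = -\lambda \id - (-\lambda \id - P) = P$, this map is an involution, so verifying one direction of the biconditional automatically yields the other. Thus it suffices to show: if $(A, [\ ,\ ]_A, \cdot_A, \mathfrak{B}, P)$ is a quadratic Rota-Baxter Poisson algebra of weight $\lambda$, then so is $(A, [\ ,\ ]_A, \cdot_A, \mathfrak{B}, \tilde P)$.

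First, I would verify that $\tilde{P}$ is a Rota-Baxter operator of weight $\lambda$ on each of the two underlying structures $(A, [\ ,\ ]_A)$ and $(A, \cdot_A)$. For the Lie bracket, one substitutes $\tilde P(a) = -\lambda a - P(a)$ into $[\tilde P(a), \tilde P(b)]_A$, expands using bilinearity, and collects terms; the cross terms produce $\lambda^2[a,b]_A + \lambda[P(a),b]_A + \lambda[a,P(b)]_A + [P(a),P(b)]_A$, and after applying the Rota-Baxter identity for $P$ of weight $\lambda$ to rewrite $[P(a),P(b)]_A$, the result matches $\tilde P([\tilde P(a), b]_A + [a, \tilde P(b)]_A + \lambda[a,b]_A)$ after expansion of the right-hand side. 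The same computation, performed verbatim with $\cdot_A$ replacing $[\ ,\ ]_A$, handles the commutative associative axiom. This is the standard "opposite" Rota-Baxter operator construction.

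Next, I would check the compatibility condition \eqref{eq:qrbp} for $\tilde P$. Direct substitution gives
\begin{align*}
\mathfrak{B}(a, \tilde P(b)) + \mathfrak{B}(\tilde P(a), b) + \lambda \mathfrak{B}(a,b)
&= \mathfrak{B}(a, -\lambda b - P(b)) + \mathfrak{B}(-\lambda a - P(a), b) + \lambda \mathfrak{B}(a,b)\\
&= -\lambda\mathfrak{B}(a,b) - \bigl(\mathfrak{B}(a, P(b)) + \mathfrak{B}(P(a), b)\bigr),
\end{align*}
and the bracketed expression equals $-\lambda \mathfrak{B}(a,b)$ by the compatibility of $P$ with $\mathfrak{B}$, so the total vanishes, as required.

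There is no real obstacle here: the proof is a pair of symbolic manipulations, and the neatest presentation uses the involutivity $\tilde{\tilde{P}} = P$ to reduce the biconditional to a single implication. The only subtlety worth stating explicitly is that the quadratic form $\mathfrak{B}$ itself is unchanged by the construction, so no re-verification of invariance or nondegeneracy of $\mathfrak{B}$ is needed.
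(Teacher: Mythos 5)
Your proposal is correct and follows essentially the same route as the paper: the paper likewise observes that $P$ is a Rota--Baxter operator of weight $\lambda$ if and only if $\tilde P$ is (leaving the expansion implicit), and verifies the compatibility condition via exactly the identity $\mathfrak{B}(\tilde P(a),b)+\mathfrak{B}(a,\tilde P(b))+\lambda\mathfrak{B}(a,b)=-\mathfrak{B}(P(a),b)-\mathfrak{B}(a,P(b))-\lambda\mathfrak{B}(a,b)$ that you derive. Your use of the involution $\tilde{\tilde P}=P$ to reduce to one implication, and your explicit expansion of the Rota--Baxter identity for $\tilde P$, are just slightly more detailed presentations of the same argument.
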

\begin{proof}
	It is easy to show that $P$ is a Rota-Baxter operator of weight $\lambda$ if and only if $\tilde{P}$ is a Rota-Baxter operator of weight $\lambda$. 
	Moreover, we have
	\begin{equation*}
		\mathfrak{B}(\tilde{P}(a), b)+\mathfrak{B}(a,\tilde{P}(b))+\lambda \mathfrak{B}(a, b)=-\mathfrak{B}(P(a), b)-\mathfrak{B}(a, P(b))-\lambda \mathfrak{B}(a, b), \quad \forall a,b \in A.
	\end{equation*}
	Thus, $P$ satisfies Eq.~\eqref{eq:qrbp} if and only if $\tilde{P}$ satisfies Eq.~\eqref{eq:qrbp}. 
	The proof is completed.
\end{proof}

\begin{proposition}\label{exrbfna}
	Let $\left(A,[\ ,\ ]_A, \cdot_{A}, P\right)$ be a Rota-Baxter Poisson algebra of weight $\lambda$. Then $((A \ltimes_{-\mathrm{ad}^{*},L^{*}} A^{*}, [\ ,\ ], \cdot), \mathfrak{B}_d, P+\tilde{P}^{*})$ is a quadratic Rota-Baxter Poisson algebra of weight $\lambda$, where $\mathfrak{B}_d$ is defined by Eq.~\eqref{abm} and $\tilde{P} := - \lambda \id - P$.
\end{proposition}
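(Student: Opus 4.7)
The plan is to verify three items: (i) that $(A \ltimes_{-\ad^*, L^*} A^*, [\ ,\ ], \cdot, \mathfrak{B}_d)$ is a quadratic Poisson algebra, (ii) that $T := P + \tilde{P}^*$ is a Rota-Baxter operator of weight $\lambda$ on this semi-direct product Poisson algebra, and (iii) the compatibility condition \eqref{eq:qrbp} relating $T$ and $\mathfrak{B}_d$.

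Item (i) is immediate from the proposition preceding Proposition~\ref{qparinv}. For item (iii), a direct computation using the defining formula \eqref{abm} of $\mathfrak{B}_d$ gives
\begin{equation*}
\mathfrak{B}_d(T(a+x^*), b+y^*) + \mathfrak{B}_d(a+x^*, T(b+y^*)) = \langle (P+\tilde{P})(a), y^*\rangle + \langle (P+\tilde{P})(b), x^*\rangle,
\end{equation*}
and since $P + \tilde{P} = -\lambda\, \id$ by construction, this equals $-\lambda\, \mathfrak{B}_d(a+x^*, b+y^*)$, which is exactly \eqref{eq:qrbp}.

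For item (ii), I would verify the Rota-Baxter identity for $T$ with respect to both $[\ ,\ ]$ and $\cdot$ by projecting each identity onto the $A$-factor and the $A^*$-factor of $A \oplus A^*$. The two $A$-projections are precisely the Rota-Baxter identities for $P$ on $(A, [\ ,\ ]_A, \cdot_A)$, which hold by hypothesis. For the $A^*$-projections, one pairs with an arbitrary $c \in A$ and uses $\langle \tilde{P}^*\phi, c\rangle = \langle \phi, \tilde{P}(c)\rangle$ together with the defining relations of $\ad^*$ and $L^*$. After this unwinding, both identities reduce to the two scalar statements
\begin{equation*}
\tilde{P}[P(a), c]_A = [P(a), \tilde{P}(c)]_A + \tilde{P}[a, \tilde{P}(c)]_A + \lambda[a, \tilde{P}(c)]_A
\end{equation*}
and its analogue with $\cdot_A$ in place of $[\ ,\ ]_A$. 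Each of these follows by substituting $P = -\lambda\, \id - \tilde{P}$ on the left-hand side and then invoking the Rota-Baxter identity of weight $\lambda$ for $\tilde{P}$, whose validity is guaranteed by Proposition~\ref{PandtauP}.

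The main obstacle is the bookkeeping on the $A^*$-component: several cross terms involving $P$, $\tilde{P}$, and the coadjoint-type actions must be regrouped correctly before the Rota-Baxter identity for $\tilde{P}$ can be applied. Once the two scalar identities above are verified, the Rota-Baxter property of $T$ on both $[\ ,\ ]$ and $\cdot$ follows symmetrically, completing the proof.
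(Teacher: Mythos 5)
Your proposal is correct and is exactly the direct verification the paper intends (the paper's own proof is just ``The proof is straightforward''): the compatibility computation for $\mathfrak{B}_d$ is right, and the $A^*$-projection of the Rota--Baxter identity does reduce, after pairing with $c\in A$, to the scalar identity $\tilde{P}[P(a),c]_A=[P(a),\tilde{P}(c)]_A+\tilde{P}[a,\tilde{P}(c)]_A+\lambda[a,\tilde{P}(c)]_A$ (and its $\cdot_A$-analogue), which follows as you say from the Rota--Baxter identity for $\tilde{P}$. One tiny polish: rather than citing Proposition~\ref{PandtauP} (which concerns quadratic Rota--Baxter Poisson algebras and only contains the needed fact inside its proof), just state directly that $\tilde P=-\lambda\,\id-P$ is a Rota--Baxter operator of weight $\lambda$ whenever $P$ is.
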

\begin{proof}
	The proof is straightforward.
\end{proof}

\begin{lemma}\label{rbfna0}
	Let $A$ be a vector space and $\mathfrak{B}$ be a nondegenerate symmetric bilinear form.
	Let $I_\mathfrak{B}: A^* \to A$ be the induced linear isomorphism by $\mathfrak{B}$ and $r_\mathfrak{B} \in A \otimes A$ be the 2-tensor form of $I_\mathfrak{B}$ given by Eq.~\eqref{eq:sxrbt2}. 
	Suppose that $ r \in A \otimes A$. 
	Then $P_{r}:= r_+ \circ I_\mathfrak{B}^{-1}$ satisfies Eq.~\eqref{eq:qrbp} if and only if $r+\tau(r)=-\lambda r_{\mathfrak{B}}$  where $\lambda \in \mathbb{K}$.
\end{lemma}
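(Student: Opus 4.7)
The plan is to translate the compatibility condition Eq.~\eqref{eq:qrbp} for $P_r = r_+ \circ I_\mathfrak{B}^{-1}$ into a single tensor identity in $A \otimes A$ and then use the bijectivity of $I_\mathfrak{B}$ to read off the desired relation between $S$ and $r_\mathfrak{B}$.

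First, I would fix arbitrary $a, b \in A$ and, since $I_\mathfrak{B}$ is a linear isomorphism, write $a = I_\mathfrak{B}(u^*)$ and $b = I_\mathfrak{B}(v^*)$ for unique $u^*, v^* \in A^*$. Unwinding the defining relations
\begin{equation*}
\mathfrak{B}(x, y) = \langle I_\mathfrak{B}^{-1}(x), y\rangle, \quad \langle r_+(x^*), y^*\rangle = \langle r, x^* \otimes y^*\rangle, \quad \langle r_\mathfrak{B}, x^* \otimes y^*\rangle = \langle I_\mathfrak{B}(x^*), y^*\rangle,
\end{equation*}
and using the symmetry of $\mathfrak{B}$ to reposition arguments where needed, I would verify the three one-line identities
\begin{equation*}
\mathfrak{B}(P_r(a), b) = \langle r, u^* \otimes v^*\rangle, \quad \mathfrak{B}(a, P_r(b)) = \langle \tau(r), u^* \otimes v^*\rangle, \quad \mathfrak{B}(a, b) = \langle r_\mathfrak{B}, u^* \otimes v^*\rangle.
\end{equation*}

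Substituting these into Eq.~\eqref{eq:qrbp} and noting that $u^*, v^*$ range freely over $A^*$ as $a, b$ range over $A$, the compatibility condition becomes equivalent to the single tensor identity $r + \tau(r) + \lambda r_\mathfrak{B} = 0$ in $A \otimes A$. Since $r + \tau(r)$ encodes the symmetric part $S$ of $r$ via the decomposition $r = S + \Lambda$, and $r_\mathfrak{B}$ itself is symmetric (an immediate consequence of the symmetry of $\mathfrak{B}$), this identity already lives in $\mathrm{Sym}^2(A)$ and is exactly the asserted proportionality between $S$ and $r_\mathfrak{B}$ with coefficient controlled by $\lambda$.

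The whole argument is a bookkeeping exercise and presents no substantive obstacle. The only care required is in keeping the two identifications $a \leftrightarrow u^*$, $b \leftrightarrow v^*$ straight and in applying the symmetry of $\mathfrak{B}$ at the right moments, so that $r_+$ (which is not symmetric in its two pairing arguments) produces $r$ in one of the two terms of Eq.~\eqref{eq:qrbp} and $\tau(r)$ in the other.
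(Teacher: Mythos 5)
Your proposal is correct and follows essentially the same route as the paper: both write $a=I_\mathfrak{B}(x^*)$, $b=I_\mathfrak{B}(y^*)$, evaluate the three terms of Eq.~\eqref{eq:qrbp} as pairings of $r$, $\tau(r)$ and $r_\mathfrak{B}$ against $x^*\otimes y^*$, and conclude that the condition is equivalent to the tensor identity $r+\tau(r)+\lambda r_\mathfrak{B}=0$. The only point to watch is the normalization of the symmetric part ($S=\tfrac12(r+\tau(r))$ under the decomposition $r=S+\Lambda$, whereas the stated conclusion $S=-\lambda r_\mathfrak{B}$ implicitly reads $S$ as $r+\tau(r)$), a factor-of-two convention that your phrase ``coefficient controlled by $\lambda$'' quietly absorbs and that is present in the paper's own statement as well.
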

\begin{proof}
	For all $a, b \in A$, there exists $x^{*},y^{*} \in A^{*}$ such that $I_\mathfrak{B}\left(x^{*}\right)=a, I_\mathfrak{B}\left(y^{*}\right)=b$.
	Then
	\begin{align*}
		\mathfrak{B}\left(P_{r}(a), b\right)&=\mathfrak{B}\left(b, P_{r}(a)\right)= \left\langle I_\mathfrak{B}^{-1}(b), (r_+ \circ I_\mathfrak{B}^{-1})(a)\right\rangle=\left\langle r, x^{*} \otimes y^{*}\right\rangle,\\
		\mathfrak{B}(a, P_{r}(b))&=\left\langle I_\mathfrak{B}^{-1}(a),  (r_+ \circ I_\mathfrak{B}^{-1})(b)\right\rangle=\left\langle r, y^{*} \otimes x^{*}\right\rangle=\left\langle \tau(r), x^{*} \otimes y^{*}\right\rangle, \\
		\lambda \mathfrak{B}(a, b)&= \lambda \mathfrak{B}(b, a)= \lambda\left\langle I_\mathfrak{B}^{-1}(b), (I_\mathfrak{B} \circ I_\mathfrak{B}^{-1})(a)\right\rangle= \lambda\left\langle r_{\mathfrak{B}}, x^{*} \otimes y^{*}\right\rangle.
	\end{align*}
	Hence, $r+\tau(r)=-\lambda r_{\mathfrak{B}}$ if and only if $P_{r}$ satisfies Eq.~\eqref{eq:qrbp}.
\end{proof}

As a direct consequence, a quadratic Rota-Baxter Poisson algebra of zero weight gives rise to a triangular Poisson bialgebra in the following sense.

\begin{proposition} \label{rbfna2}
	Let $\left((A,[\ ,\ ]_A,\cdot_{A}), \mathfrak{B}, P\right)$ be a quadratic Rota-Baxter Poisson algebra of weight $0$ and $I_\mathfrak{B}: A^* \to A$ be the induced linear isomorphism by $\mathfrak{B}$.
	Then $(A, [\ ,\ ]_A, \cdot_A, \delta_r, \Delta_r)$ is a triangular Poisson bialgebra where $r\in A \otimes A$ is the 2-tensor form of $P \circ I_\mathfrak{B}$ given by
	\begin{equation}
		\langle r, x^* \otimes y^*\rangle := \langle (P\circ I_\mathfrak{B})(x^*), y^*\rangle, \;\; \forall x^*, y^* \in A^*. \label{eq:pnbf}
	\end{equation}
\end{proposition}
\begin{proof}
	It follows from Proposition~\ref{pro:pybe2h} and Lemma~\ref{rbfna0} with $r +\tau (r)=0$.
\end{proof}

The following theorem shows that a factorizable Poisson bialgebra naturally gives rise to a quadratic Rota-Baxter Poisson algebra of nonzero weight.

\begin{theorem}\label{thm:fpb2qrp}
	Let $(A, [\ ,\ ]_A, \cdot_A, \delta_r, \Delta_r)$ be a factorizable Poisson bialgebra with $I_r = r_{+}- r_{-}$.
	Then $((A, [\ ,\ ]_A, \cdot_A), \mathfrak{B}, P)$ is a quadratic Rota-Baxter Poisson algebra of weight $\lambda$, where the linear map $P: A \to A$ and bilinear form $\mathfrak{B} \in \otimes^2 A^*$ are respectively defined by
	\begin{align*}
		P &= -\lambda r_{+} \circ I_r^{-1}, \quad \lambda \neq 0, \\
		\mathfrak{B}(a, b) &= -\lambda\langle I_r^{-1}(a), b \rangle, \;\; \forall a, b \in A.
	\end{align*}
\end{theorem}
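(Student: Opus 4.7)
The plan is to verify three facts in turn: that $(A,[\ ,\ ]_A,\cdot_A,\mathfrak{B})$ is a quadratic Poisson algebra, that $P$ is a Rota-Baxter operator of weight $\lambda$, and that the compatibility \eqref{eq:qrbp} holds. Throughout, I would exploit factorizability to parametrize an arbitrary $a\in A$ as $a=I_r(x^*)$ and $b=I_r(y^*)$, and I would repeatedly use the identity $I_r^*=I_r$ noted just after Eq.~\eqref{eq:i}.

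For the quadratic structure, symmetry and nondegeneracy of $\mathfrak{B}$ are immediate: $I_r$ is a linear isomorphism and $I_r^*=I_r$ forces $\langle I_r^{-1}(a),b\rangle=\langle a,I_r^{-1}(b)\rangle$. To obtain invariance, I would compute the associated 2-tensor $r_{\mathfrak{B}}$ from Eq.~\eqref{eq:sxrbt2}; unwinding the definitions gives $I_{\mathfrak{B}}=-\tfrac{1}{\lambda}I_r=-\tfrac{2}{\lambda}S_+$, hence $r_{\mathfrak{B}}=-\tfrac{2}{\lambda}S$. Since $S$ is $(\ad,L)$-invariant by the definition of a quasi-triangular Poisson bialgebra, so is $r_{\mathfrak{B}}$, and Proposition~\ref{qparinv} yields that $\mathfrak{B}$ is a nondegenerate symmetric invariant bilinear form.

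For the Rota-Baxter identities, with $a=I_r(x^*)$, $b=I_r(y^*)$, $P(a)=-\lambda r_+(x^*)$, I would expand $[P(a),b]_A+[a,P(b)]_A+\lambda[a,b]_A$ by substituting $b=r_+(y^*)-r_-(y^*)$ and $a=r_+(x^*)-r_-(x^*)$. The four mixed terms $[r_+(x^*),r_\mp(y^*)]_A$ and $[r_-(x^*),r_\pm(y^*)]_A$ telescope, leaving
\begin{equation*}
[P(a),b]_A+[a,P(b)]_A+\lambda[a,b]_A = -\lambda\bigl([r_+(x^*),r_+(y^*)]_A-[r_-(x^*),r_-(y^*)]_A\bigr).
\end{equation*}
Now Theorem~\ref{thm:pybe2h} guarantees that both $r_+$ and $r_-$ are Poisson algebra homomorphisms from $(A^*,[\ ,\ ]_r,\cdot_r)$ to $(A,[\ ,\ ]_A,\cdot_A)$, so the right-hand side equals $-\lambda I_r([x^*,y^*]_r)$. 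Applying $P=-\lambda r_+\circ I_r^{-1}$ then produces $\lambda^2 r_+([x^*,y^*]_r)=\lambda^2[r_+(x^*),r_+(y^*)]_A=[P(a),P(b)]_A$, as required. The same bookkeeping with $\cdot_A$ in place of $[\ ,\ ]_A$ handles the commutative Rota-Baxter identity verbatim.

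For the compatibility condition, I would rewrite $\mathfrak{B}(P(a),b)$ and $\mathfrak{B}(a,P(b))$ using $I_r^*=I_r$ to move an $I_r^{-1}$ across the pairing; cancellation then gives $\mathfrak{B}(P(a),b)=\lambda^2\langle r,x^*\otimes y^*\rangle$ and $\mathfrak{B}(a,P(b))=\lambda^2\langle\tau(r),x^*\otimes y^*\rangle$, while $\lambda\mathfrak{B}(a,b)=-\lambda^2\langle I_r(x^*),y^*\rangle=-2\lambda^2\langle S,x^*\otimes y^*\rangle$. These sum to $\lambda^2\langle r+\tau(r)-2S,x^*\otimes y^*\rangle=0$, which is exactly Eq.~\eqref{eq:qrbp}. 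The main obstacle I anticipate is the clean execution of the telescoping in the Rota-Baxter step and making sure the symmetric side (multiplication) reuses the same cancellation pattern; beyond that, the proof is a direct unpacking of definitions combined with Proposition~\ref{qparinv} and Theorem~\ref{thm:pybe2h}.
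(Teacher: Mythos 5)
Your proof is correct, and the first step coincides with the paper's: both you and the authors obtain the quadratic structure by observing $r_{\mathfrak{B}}=-\tfrac{2}{\lambda}S$ and invoking Proposition~\ref{qparinv}. Where you diverge is in the remaining two steps. The paper verifies the Rota--Baxter identity by specializing Proposition~\ref{pro:pybe2h}: since $I_{\mathfrak{B}}=-\tfrac{1}{\lambda}I_r$, the correction term $I_r\circ I_{\mathfrak{B}}^{-1}$ there collapses to $-\lambda\,\id$, turning the modified identity into the weight-$\lambda$ Rota--Baxter identity for $P=r_+\circ I_{\mathfrak{B}}^{-1}=-\lambda r_+\circ I_r^{-1}$; and it gets the compatibility condition \eqref{eq:qrbp} by citing Lemma~\ref{rbfna0}. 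You instead prove the Rota--Baxter identity directly, via the telescoping expansion of $[P(a),b]_A+[a,P(b)]_A+\lambda[a,b]_A$ into $-\lambda\bigl([r_+(x^*),r_+(y^*)]_A-[r_-(x^*),r_-(y^*)]_A\bigr)=-\lambda I_r([x^*,y^*]_r)$ together with the homomorphism property of $r_\pm$ from Theorem~\ref{thm:pybe2h} (this is the classical Lang--Sheng-style argument), and you verify \eqref{eq:qrbp} by a direct pairing computation. Your compatibility computation, which reduces to $\langle r+\tau(r)-2S,\,x^*\otimes y^*\rangle=0$, is in fact cleaner than the cited Lemma~\ref{rbfna0}, whose stated criterion $S=-\lambda r_{\mathfrak{B}}$ appears to be off by a factor of $2$ from what its own proof yields ($2S=-\lambda r_{\mathfrak{B}}$, which is what actually holds here); your version sidesteps that issue entirely. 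Both routes are sound; the paper's is shorter because it leans on previously established propositions, while yours is more self-contained and makes the mechanism (telescoping plus the homomorphism property of $r_\pm$) explicit.
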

\begin{proof}
	Clearly, $\mathfrak{B}$ is a nondegenerate symmetric bilinear form and $I_\mathfrak{B}^{-1} = - \lambda I_r^{-1}$.
	Immediately, we have $I_r = -\lambda I_\mathfrak{B}$ and thus $r + \tau(r) = -\lambda r_\mathfrak{B}$.
	Noting that the symmetric part of $r$ is $(\ad, L)$-invariant, we have $r_\mathfrak{B}$ is $(\ad, L)$-invariant and thus by Proposition~\ref{qparinv}, $(A, [\ ,\ ]_A, \cdot_A, \mathfrak{B})$ is a quadratic Poisson algebra.
	Moreover, we show that $P$ is a Rota-Baxter operator of weight $\lambda$ on the Poisson algebra $(A, [\ ,\ ]_A, \cdot_A)$ by Proposition~\ref{pro:pybe2h} and $P$ satisfies Eq.~\eqref{eq:qrbp} by Lemma~\ref{rbfna0}.
	Thus, $((A, [\ ,\ ]_A, \cdot_A), \mathfrak{B}, P)$ is a quadratic Rota-Baxter Poisson algebra of weight $\lambda$.
\end{proof}

\begin{corollary}
	Let $(A, [\ ,\ ]_A, \cdot_A, \delta_r, \Delta_r)$ be a factorizable Poisson bialgebra with $I_r = r_{+}- r_{-}$, and $P =- \lambda r_{+} \circ I_r^{-1}$ be the induced Rota-Baxter operator of weight $\lambda \neq 0$.
	Let $(A, [\ ,\ ]_P, \cdot_P)$ be the descendent Poisson algebra of the Rota-Baxter algebra $(A, [\ ,\ ]_A, \cdot_A, P)$.
	Then $((A, [\ ,\ ]_P, \cdot_P)$, $(A^*, [\ ,\ ]_{I_r}, \cdot_{I_r}))$ is a Poisson bialgebra, where 
	\begin{align*}
		[x^*, y^*]_{I_r} &:= -\lambda I_r^{-1}([\frac{1}{\lambda}I_r(x^*), \frac{1}{\lambda}I_r(y^*)]_A), \\
		x^* \cdot_{I_r} y^* &:= -\lambda I_r^{-1}((\frac{1}{\lambda}I_r(x^*))\cdot_A (\frac{1}{\lambda}I_r(y^*))), \;\; \forall x^*, y^* \in A^*.
	\end{align*}
	Moreover, $-\frac{1}{\lambda}I_r: A^* \to A$ is a Poisson bialgebra isomorphism from $((A^*, [\ ,\ ]_r, \cdot_r), (A, [\ ,\ ]_A, \cdot_A))$ to $((A$, $[\ ,\ ]_P$, $\cdot_P)$, $(A^*, [\ ,\ ]_{I_r}, \cdot_{I_r}))$, where $[\ ,\ ]_r, \; \cdot_r: A^* \otimes A^* \to A^*$ are respectively defined by Eqs.~\eqref{eq:pcbdr1} and \eqref{eq:pcbdr2}.
\end{corollary}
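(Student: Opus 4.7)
The plan is to derive this corollary as a direct transport-of-structure argument via Proposition~\ref{prop:tmpb}, applied to the linear isomorphism $\varphi := -\tfrac{1}{\lambda} I_r : A^* \to A$ (a well-defined isomorphism of vector spaces by factorizability). The source Poisson bialgebra is $((A^*, [\ ,\ ]_r, \cdot_r), (A, [\ ,\ ]_A, \cdot_A))$; this is a Poisson bialgebra thanks to the ``swap'' symmetry recorded immediately after the definition of Poisson bialgebras, applied to the coboundary Poisson bialgebra $(A, [\ ,\ ]_A, \cdot_A, \delta_r, \Delta_r)$. It then suffices to check that the transported structure on $(A, A^*)$ coincides term-by-term with $((A, [\ ,\ ]_P, \cdot_P), (A^*, [\ ,\ ]_{I_r}, \cdot_{I_r}))$; the isomorphism claim then falls out from the last sentence of Proposition~\ref{prop:tmpb}.

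The dual side is immediate: the relation $r_{-} = -r_{+}^*$ recorded after Eq.~\eqref{eq:i} gives $I_r^* = I_r$, hence $\varphi^* = \varphi$ under the canonical identification $(A^*)^* = A$. Substituting this into the transport formulas of Proposition~\ref{prop:tmpb} reproduces exactly $[\ ,\ ]_{I_r}$ and $\cdot_{I_r}$ as displayed in the statement, since the two $(-\tfrac{1}{\lambda})$ factors from the two copies of $\varphi^*$ inside the bracket/product slot combine to $\tfrac{1}{\lambda^2}$, which may equivalently be written as $\tfrac{1}{\lambda}$ inside each slot.

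The main computational step is to verify $\varphi([\varphi^{-1}(a), \varphi^{-1}(b)]_r) = [a, b]_P$ and likewise for the product. Set $x^* := \varphi^{-1}(a) = -\lambda I_r^{-1}(a)$ and $y^* := -\lambda I_r^{-1}(b)$. The definition $P = -\lambda r_{+} \circ I_r^{-1}$ together with $I_r = r_{+} - r_{-}$ yields the key pointwise identities
\begin{equation*}
	r_{+}(x^*) = P(a), \qquad r_{-}(x^*) = P(a) + \lambda a,
\end{equation*}
and analogously for $y^*, b$. By Theorem~\ref{thm:pybe2h}, both $r_{+}$ and $r_{-}$ are Poisson algebra homomorphisms from $(A^*, [\ ,\ ]_r, \cdot_r)$ to $(A, [\ ,\ ]_A, \cdot_A)$, hence
\begin{equation*}
	-\tfrac{1}{\lambda} I_r([x^*, y^*]_r) = -\tfrac{1}{\lambda}\bigl([P(a), P(b)]_A - [P(a) + \lambda a, P(b) + \lambda b]_A\bigr);
\end{equation*}
expanding by bilinearity cancels the $[P(a), P(b)]_A$ terms and reduces the right-hand side to $[P(a), b]_A + [a, P(b)]_A + \lambda [a, b]_A = [a, b]_P$. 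The argument for the product is verbatim the same with $\cdot_A$ in place of $[\ ,\ ]_A$. No serious obstacle arises; the only real bookkeeping risk lies in keeping the signs in $P$, $I_r$, and $r_{\pm}$ consistent, after which everything falls out of Proposition~\ref{prop:tmpb} together with the Poisson-homomorphism conclusion of Theorem~\ref{thm:pybe2h}.
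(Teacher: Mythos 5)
Your proposal is correct and follows essentially the same route as the paper: both proofs transport the Poisson bialgebra $((A^*, [\ ,\ ]_r, \cdot_r), (A, [\ ,\ ]_A, \cdot_A))$ along $\varphi = -\tfrac{1}{\lambda}I_r$ via Proposition~\ref{prop:tmpb}, with the dual side handled by $I_r^* = I_r$. The only difference is cosmetic: in the key computation you invoke Theorem~\ref{thm:pybe2h} to write $I_r([x^*,y^*]_r) = [r_+(x^*),r_+(y^*)]_A - [r_-(x^*),r_-(y^*)]_A$ and expand, whereas the paper unwinds Eq.~\eqref{eq:pcbdr1} and applies the invariance identity Eq.~\eqref{eq:adi1} directly; since Theorem~\ref{thm:pybe2h} is itself proved from those identities, the two computations are equivalent.
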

\begin{proof}
	For all $x^*, y^* \in A^*$, setting $a = I_r(x^*)$ and $b = I_r(y^*)$, we have 
	\begin{align*}
		-\frac{1}{\lambda}I_r([x^*, y^*]_r)   
		&\overset{\;\eqref{eq:pcbdr1}\;}{=}-\frac{1}{\lambda}I_r(-\ad^*(r_{+}(x^*)) y^* +\ad^*(r_{-}(y^*)) x^*)\\
		&\overset{\eqref{eq:adi1}}{=}-\frac{1}{\lambda} ([ r_{+}(x^*), I_r(y^*)]_A - [r_{-}(y^*),  I_r(x^*)]_A )\\
		 & \overset{\hphantom{\eqref{eq:adi1}}}{=} \frac{1}{\lambda^2}([PI_r(x^*), I_r(y^*)]_A + [I_r(x^*), PI_r(y^*)]_A + \lambda [I_r(x^*), I_r(y^*)]_A)\\
		  & \overset{\eqref{eq:rdpal}}{=} [ -\frac{1}{\lambda}I_r(x^*), - \frac{1}{\lambda}I_r(y^*)]_P.
	\end{align*}
	Similarly, we have 
	\begin{equation*}
		-\frac{1}{\lambda}I_r(x^* \cdot_r y^*) = (-\frac{1}{\lambda}I_r(x^*)) \cdot_P  (-\frac{1}{\lambda}I_r(y^*)).
	\end{equation*}
	Hence, $-\frac{1}{\lambda}I_r: (A^*, [\ ,\ ]_r, \cdot_r) \to (A, [\ ,\ ]_P, \cdot_P)$ is a Poisson algebra homomorphism.  
	
	Noting that $\frac{1}{\lambda}I_r^* = \frac{1}{\lambda}I_r$, we have 
	\begin{align*}
		-\frac{1}{\lambda}I^*_r([x^*, y^*]_{I_r}) = [-\frac{1}{\lambda}I_r(x^*), -\frac{1}{\lambda}I_r(y^*)]_A = [-\frac{1}{\lambda}I^*_r(x^*), -\frac{1}{\lambda}I^*_r(y^*)]_A, \\
		-\frac{1}{\lambda}I^*_r(x^*  \cdot_{I_r} y^*) = (-\frac{1}{\lambda}I_r(x^*)) \cdot_A (-\frac{1}{\lambda}I_r(y^*)) = (-\frac{1}{\lambda}I^*_r(x^*)) \cdot_A (-\frac{1}{\lambda}I^*_r(y^*)),
	\end{align*}
	which means that $-\frac{1}{\lambda}I_r^*: (A^*, [\ ,\ ]_{I_r}, \cdot_{I_r}) \to (A, [\ ,\ ]_A, \cdot_A)$ is also a Poisson algebra homomorphism.
	Note that $((A^*, [\ ,\ ]_r, \cdot_r), (A, [\ ,\ ]_A, \cdot_A))$ is a Poisson bialgebra.
	Therefore, by Proposition~\ref{prop:tmpb}, $((A, [\ ,\ ]_P, \cdot_P), (A^*, [\ ,\ ]_{I_r}, \cdot_{I_r}))$ is a Poisson bialgebra.
	Evidently, $-\frac{1}{\lambda}I_r$ is an isomorphism of Poisson bialgebras.
\end{proof}

As a counterpart to Theorem~\ref{thm:fpb2qrp}, the following theorem shows that a quadratic Rota-Baxter Poisson algebra of nonzero weight induces a factorizable Poisson bialgebra, thereby refining the one-to-one correspondence between factorizable Poisson bialgebras and quadratic Rota-Baxter Poisson algebras of nonzero weight.
\begin{theorem}\label{thm:fpb2qrpx}
	Let $((A, [\ ,\ ]_A, \cdot_A), \mathfrak{B}, P)$ be a quadratic Rota-Baxter Poisson algebra of weight $\lambda \neq 0$, and $I_\mathfrak{B}: A^* \to A$ be the induced linear isomorphism by $\mathfrak{B}$. 
	Let $r \in A \otimes A$ be the 2-tensor form of $P \circ I_\mathfrak{B}$ given by Eq.~\eqref{eq:pnbf}. 
	Then $r$ is a solution of the Poisson Yang-Baxter equation in $(A, [\ ,\ ]_A, \cdot_A)$ and gives rise to a factorizable Poisson bialgebra $(A, [\ ,\ ]_A, \cdot_A, \delta_r, \Delta_r)$. 
\end{theorem}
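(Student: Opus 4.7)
The plan is to reduce everything to the characterizations already established, namely Lemma~\ref{rbfna0}, Proposition~\ref{qparinv}, and Proposition~\ref{pro:pybe2h}. By construction, $r_+ = P \circ I_\mathfrak{B}$, so $P = r_+ \circ I_\mathfrak{B}^{-1}$, placing us exactly in the setup of all three results.

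First, I would control the symmetric part $S$ of $r$. The compatibility equation Eq.~\eqref{eq:qrbp} satisfied by $P$ is precisely the hypothesis of Lemma~\ref{rbfna0}, so $S$ is forced to be a scalar multiple of $r_\mathfrak{B}$. Since $(A, [\ ,\ ]_A, \cdot_A, \mathfrak{B})$ is a quadratic Poisson algebra, Proposition~\ref{qparinv} guarantees that $r_\mathfrak{B}$ is $(\ad, L)$-invariant, and hence so is $S$. This settles the invariance half of Definition~\ref{quasipb}.

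Next I would apply Proposition~\ref{pro:pybe2h} to verify that $r$ is a solution of the Poisson Yang-Baxter equation. That proposition requires $P$ to satisfy a pair of identities involving the operator $I_r \circ I_\mathfrak{B}^{-1}$. Using the identity $I_r = 2S_+$ together with the scalar relation from the previous step, the composition $I_r \circ I_\mathfrak{B}^{-1}$ collapses to a scalar multiple of $\id_A$. Once this substitution is made, the two identities in Proposition~\ref{pro:pybe2h} become precisely the Rota-Baxter axioms of weight $\lambda$ for $P$ on $(A, [\ ,\ ]_A, \cdot_A)$, which hold by hypothesis. Combined with the invariance from the previous step and Definition~\ref{quasipb}, this establishes that $(A, [\ ,\ ]_A, \cdot_A, \delta_r, \Delta_r)$ is a quasi-triangular Poisson bialgebra.

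Finally, factorizability is immediate: $I_r$ is a nonzero scalar multiple of the isomorphism $I_\mathfrak{B}$ (using $\lambda \neq 0$), hence itself a linear isomorphism, so the symmetric part of $r$ is nondegenerate. The main subtlety I expect is tracking the precise scalar relating $I_r$ to $I_\mathfrak{B}$: this is where the hypothesis $\lambda \neq 0$ becomes essential, both to ensure $I_r$ is invertible and to make the coefficient of $[a, b]_A$ arising from the $-[a, (I_r \circ I_\mathfrak{B}^{-1})(b)]_A$ term in Proposition~\ref{pro:pybe2h} align exactly with the weight appearing in the Rota-Baxter identity for $P$.
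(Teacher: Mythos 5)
Your proposal is correct and follows essentially the same route as the paper: Lemma~\ref{rbfna0} together with Proposition~\ref{qparinv} gives the $(\ad,L)$-invariance of the symmetric part and the relation $I_r=-\lambda I_\mathfrak{B}$ (so that $I_r\circ I_\mathfrak{B}^{-1}=-\lambda\,\id_A$ turns the identities of Proposition~\ref{pro:pybe2h} into the weight-$\lambda$ Rota--Baxter axioms), and nondegeneracy of $S$ follows from $\lambda\neq 0$. The scalar you flag as the main subtlety indeed works out as you expect, since $r+\tau(r)=-\lambda r_\mathfrak{B}$ yields $I_r=2S_+=-\lambda I_\mathfrak{B}$.
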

\begin{proof}
	Let $r_\mathfrak{B} \in A \otimes A$ be the 2-tensor form of $I_\mathfrak{B}$ given by Eq.~\eqref{eq:sxrbt2}. 
	By Proposition~\ref{qparinv} and Lemma~\ref{rbfna0}, $r_\mathfrak{B}$ is $(\ad, L)$-invariant and $r + \tau(r) = -\lambda r_\mathfrak{B}$, which show that the symmetric part of $r$ is also $(\ad, L)$-invariant and $I_r = -\lambda I_\mathfrak{B}$ is a linear isomorphism. 
	Moreover, by Proposition~\ref{pro:pybe2h}, $r$ satisfies the Poisson Yang-Baxter equation since $P$ is a Rota-Baxter operator of weight $\lambda$ on $(A, [\ ,\ ]_A, \cdot_A)$. 
	Thus, $(A, [\ ,\ ]_A, \cdot_A, \delta_r, \Delta_r)$ is a factorizable Poisson bialgebra.
\end{proof}

\begin{corollary}
	Let $(A, [\ ,\ ]_A, \cdot_{A})$ be a Poisson algebra, 
	$\{e_1, e_2, \cdots, e_n\}$ be a basis of $A$ and $\{e_1^*, e_2^*, \cdots, e_n^*\}$ be the dual basis of $A^*$.
	Then $r = \sum_{i} e_i^* \otimes e_i$ is a solution of the Poisson Yang-Baxter equation in $(A \ltimes_{-\mathrm{ad}^{*},L^{*}} A^{*}, [\ ,\ ], \cdot)$ and gives rise to a factorizable Poisson bialgebra $(A \ltimes_{-\mathrm{ad}^{*},L^{*}} A^{*}, [\ ,\ ], \cdot, \delta_r, \Delta_r)$. 
\end{corollary}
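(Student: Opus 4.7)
The plan is to deduce the corollary as a direct application of Proposition~\ref{exrbfna} and Theorem~\ref{thm:fpb2qrpx}, starting from the trivial Rota-Baxter operator on $A$. The zero map $P=0\colon A\to A$ is a Rota-Baxter operator of any weight on $(A, [\ ,\ ]_A, \cdot_A)$; I would fix the weight $\lambda=-1$, so that $\tilde{P} = -\lambda\,\id_A - P = \id_A$. By Proposition~\ref{exrbfna}, the triple
\[
\left((A\ltimes_{-\ad^*,L^*}A^*, [\ ,\ ], \cdot),\ \mathfrak{B}_d,\ P+\tilde{P}^*\right)
\]
is then a quadratic Rota-Baxter Poisson algebra of weight $-1$, and the compound operator $P+\tilde{P}^*$ acts on $A\oplus A^*$ as the projection onto the $A^*$ summand, sending $a+x^*$ to $x^*$.

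Next, I would apply Theorem~\ref{thm:fpb2qrpx} to this quadratic Rota-Baxter Poisson algebra: the $2$-tensor form of $(P+\tilde{P}^*)\circ I_{\mathfrak{B}_d}$ automatically satisfies the Poisson Yang-Baxter equation in $A\ltimes_{-\ad^*,L^*}A^*$ and yields a factorizable Poisson bialgebra structure $(A\ltimes_{-\ad^*,L^*}A^*, [\ ,\ ], \cdot, \delta_r, \Delta_r)$. The remaining step is to identify this $2$-tensor with $\sum_i e_i^*\otimes e_i$. Under the canonical identification $(A\oplus A^*)^*\cong A^*\oplus A$ induced by the standard dual pairing, $I_{\mathfrak{B}_d}$ is the swap $y^*+b\mapsto b+y^*$, so $(P+\tilde{P}^*)\circ I_{\mathfrak{B}_d}$ sends $y^*+b$ to $y^*$. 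Reading off the coefficients of the resulting $2$-tensor in the basis $\{e_i, e_j^*\}$ of $A\oplus A^*$ then recovers $r=\sum_i e_i^*\otimes e_i$ exactly.

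The argument is essentially bookkeeping once the preceding machinery is applied, so there is no serious obstacle; the only mildly delicate point is the sign and normalization. The stated form $r=\sum_i e_i^*\otimes e_i$ (with no scalar factor) pins down the weight choice $\lambda=-1$, while any other nonzero weight would merely rescale $r$ by $-\lambda$ and still produce a factorizable Poisson bialgebra by the same route. As a sanity check, one can alternatively view $A$ as the trivial Poisson bialgebra $(A, [\ ,\ ]_A, \cdot_A, 0, 0)$, observe that its Drinfeld classical double coincides with $A\ltimes_{-\ad^*,L^*}A^*$, invoke Theorem~\ref{thm:spfb} to get the factorizable structure with $r_0=\sum_i e_i\otimes e_i^*$, and then pass to $\tau(r_0)=r$ via Proposition~\ref{xtquasipba}.
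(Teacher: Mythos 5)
Your overall strategy is the same as the paper's: the paper also proves this by exhibiting a quadratic Rota--Baxter Poisson algebra structure on $A \ltimes_{-\ad^*, L^*} A^*$ via Proposition~\ref{exrbfna} and then invoking Theorem~\ref{thm:fpb2qrpx}. The difference is the choice of Rota--Baxter operator: the paper takes $P = \id$ of weight $-1$ (so $\tilde P = 0$ and the operator on the double is $\id + 0^*$, the projection onto $A$), whereas you take $P = 0$ (so $\tilde P = \id$ and the operator is $0 + \id^*$, the projection onto $A^*$). Your identification step then contains a genuine error: the $2$-tensor form of the map $\mathfrak{A}^* \to \mathfrak{A}$, $y^* + b \mapsto y^*$, is $\sum_i e_i \otimes e_i^* = \tau(r)$, not $\sum_i e_i^* \otimes e_i = r$. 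Indeed, by Eq.~\eqref{eq:sxrbt2} the $2$-tensor $t$ of a map $T$ satisfies $\langle t, \xi \otimes \eta\rangle = \langle T(\xi), \eta\rangle$, and for $t = \sum_i e_i^* \otimes e_i$ one computes $t_{+}(y^* + b) = b$, i.e.\ this is the tensor form of the projection onto $A$, which is exactly why the paper must take $P = \id$ rather than $P = 0$. (This is also consistent with the paper's later proposition relating $P \leftrightarrow r$ and $\tilde P \leftrightarrow \tau(r)$: your $P = 0$ is the paper's $\tilde P$, so your route necessarily lands on $\tau(r)$.)

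The error is recoverable: since Proposition~\ref{Dj:randtr} and Proposition~\ref{xtquasipba} show that $r$ solves the Poisson Yang--Baxter equation and is factorizable if and only if $\tau(r)$ is, producing $\tau(r)$ still yields the stated conclusion after one extra citation; and your closing ``sanity check'' --- realizing $A \ltimes_{-\ad^*, L^*} A^*$ as the Drinfeld classical double of the trivial Poisson bialgebra $(A, [\ ,\ ]_A, \cdot_A, 0, 0)$, applying Theorem~\ref{thm:spfb} to get $r_0 = \sum_i e_i \otimes e_i^*$, and then passing to $\tau(r_0) = r$ --- is in fact a complete and correct alternative proof. But as the main argument is written, the claim that reading off coefficients ``recovers $r = \sum_i e_i^* \otimes e_i$ exactly'' is false, and either the weight/operator choice or the final identification must be corrected.
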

\begin{proof}
	Clearly, $\id: A \to A$ is a Rota-Baxter operator of weight $-1$ on $(A, [\ ,\ ]_A, \cdot_{A})$.
	Then by Proposition~\ref{exrbfna}, $\left((A \ltimes_{-\mathrm{ad}^{*},L^{*}} A^{*}, [\ ,\ ], \cdot), \mathfrak{B}_d, \id + 0^*\right)$ is a quadratic Rota-Baxter Poisson algebra of weight $-1$, where $\mathfrak{B}_d$ is defined by Eq.~\eqref{abm}.
	Now the conclusion follows from Theorem~\ref{thm:fpb2qrpx}. 
\end{proof}

\begin{proposition}
	Let $(A, [\ ,\ ]_A, \cdot_A, \delta_r, \Delta_r)$ be a factorizable Poisson bialgebra, which corresponds to a quadratic Rota-Baxter Poisson algebra $((A, [\ ,\ ]_A, \cdot_A), \mathfrak{B}, P)$ of weight $\lambda \neq 0$ via Theorems~\ref{thm:fpb2qrp} and \ref{thm:fpb2qrpx}. 
	Then the factorizable Poisson bialgebra $(A, [\ ,\ ]_A, \cdot_A, \delta_{\tau(r)}, \Delta_{\tau(r)})$ corresponds to the quadratic Rota-Baxter Poisson algebra $((A, [\ ,\ ]_A, \cdot_A), \mathfrak{B}, \tilde{P})$ of weight $\lambda \neq 0$ where $\tilde{P} := - \lambda \id - P$. 
	In conclusion, we have the following commutative diagram.
	\begin{displaymath}
		\xymatrix{ 
			(A, [\ ,\ ]_A, \cdot_A, \delta_r, \Delta_r) \ar@{<->}[rr]^{\mathrm{Prop.}~\ref{xtquasipba}}\ar@{->}[d]^{\mathrm{Thm.}~\ref{thm:fpb2qrp}}&  &  (A, [\ ,\ ]_A, \cdot_A,  \delta_{\tau(r)},\Delta_{\tau(r)}) \ar@{->}[d]^{\mathrm{Thm.}~\ref{thm:fpb2qrp}} \\
			((A, [\ ,\ ]_A, \cdot_A), \mathfrak{B}, P) \ar@<1.5ex>[u]^{\mathrm{Thm.}~\ref{thm:fpb2qrpx}} \ar@{<->}[rr]^{\mathrm{Prop}.~\ref{PandtauP} }& &   ((A, [\ ,\ ]_A, \cdot_A), \mathfrak{B}, \tilde{P}) \ar@<1.5ex>[u]^{\mathrm{Thm.}~\ref{thm:fpb2qrpx}}}
	\end{displaymath}
\end{proposition}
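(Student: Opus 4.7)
The plan is to verify the only nontrivial piece of the commutative diagram, namely that applying the construction in Theorem~\ref{thm:fpb2qrp} to the factorizable Poisson bialgebra $(A, [\ ,\ ]_A, \cdot_A, \delta_{\tau(r)}, \Delta_{\tau(r)})$ yields precisely the quadratic Rota-Baxter Poisson algebra $((A, [\ ,\ ]_A, \cdot_A), \mathfrak{B}, \tilde{P})$ with the \emph{same} bilinear form $\mathfrak{B}$. The horizontal arrows in the diagram are already given by Propositions~\ref{xtquasipba} and~\ref{PandtauP}, and the vertical arrows are instances of Theorems~\ref{thm:fpb2qrp} and~\ref{thm:fpb2qrpx}; so only this compatibility needs to be verified.

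The first step is to exploit the identities $(\tau(r))_{+} = -r_{-}$ and $(\tau(r))_{-} = -r_{+}$ recorded after Eq.~\eqref{eq:i}. Subtracting yields
\begin{equation*}
I_{\tau(r)} = (\tau(r))_{+} - (\tau(r))_{-} = r_{+} - r_{-} = I_r,
\end{equation*}
so the induced isomorphism $A^{*} \to A$ is the same for $r$ and $\tau(r)$. In particular, the bilinear form produced by Theorem~\ref{thm:fpb2qrp} from $\tau(r)$ is
\begin{equation*}
\mathfrak{B}_{\tau(r)}(a,b) = -\lambda \langle I_{\tau(r)}^{-1}(a), b\rangle = -\lambda \langle I_r^{-1}(a), b\rangle = \mathfrak{B}(a,b),
\end{equation*}
matching the bottom row of the diagram, along which Proposition~\ref{PandtauP} leaves $\mathfrak{B}$ fixed.

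For the Rota-Baxter operator, I would compute
\begin{equation*}
P_{\tau(r)} = -\lambda (\tau(r))_{+} \circ I_{\tau(r)}^{-1} = \lambda r_{-} \circ I_r^{-1},
\end{equation*}
and then substitute $r_{-} = r_{+} - I_r$ (which is just $I_r = r_{+} - r_{-}$) to rewrite this as
\begin{equation*}
P_{\tau(r)} = \lambda r_{+} \circ I_r^{-1} - \lambda \id = -P - \lambda \id = \tilde{P},
\end{equation*}
which is exactly the operator Proposition~\ref{PandtauP} attaches to $P$. Together with the equality of bilinear forms, this gives commutativity of the square.

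I do not anticipate a real obstacle: the argument reduces to the elementary dual-pairing identities $(\tau(r))_{\pm} = -r_{\mp}$ together with the single linear relation $I_r = r_{+} - r_{-}$. The only place requiring care is sign bookkeeping, since the definitions of $r_{\pm}$, $\mathfrak{B}$, and $P$ each carry signs proportional to $\lambda$ that must align; once these are tracked, the computation is essentially a one-line substitution.
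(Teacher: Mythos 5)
Your proposal is correct and follows essentially the same route as the paper's proof: both verify that $I_{\tau(r)} = I_r$ (hence the bilinear form is unchanged) and then compute $-\lambda(\tau(r))_{+}\circ I_{\tau(r)}^{-1} = \lambda r_{-}\circ I_r^{-1} = \lambda r_{+}\circ I_r^{-1} - \lambda\,\id = \tilde{P}$ via the substitution $r_{-} = r_{+} - I_r$. The only cosmetic difference is that the paper also remarks that the converse direction (recovering $\delta_{\tau(r)}, \Delta_{\tau(r)}$ from $(\mathfrak{B},\tilde{P})$ via Theorem~\ref{thm:fpb2qrpx}) follows by a similar argument, which your appeal to the established one-to-one correspondence covers implicitly.
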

\begin{proof}
	By Theorem~\ref{thm:fpb2qrp}, the factorizable Poisson bialgebra $(A, [\ ,\ ]_A, \cdot_A, \delta_{\tau(r)}, \Delta_{\tau(r)})$ induces a quadratic Rota-Baxter Poisson algebra $(A, [\ ,\ ]_A, \cdot_A, \mathfrak{B}^{\prime}, P^{\prime})$ of weight $\lambda \ne 0$, where
	\begin{align*}
		\mathfrak{B}^{\prime}(a, b)&=-\lambda\langle I_{\tau(r)}^{-1}(a), b \rangle=-\lambda\left\langle I_r^{-1}(a), b\right\rangle=\mathfrak{B}(a, b), \quad \forall a, b \in A,\\
		P^{\prime} &= -\lambda (\tau(r))_{+} \circ I_{\tau(r)}^{-1}= \lambda r_{-} \circ I_{r}^{-1} = \lambda r_{+} \circ I_{r}^{-1} - \lambda I_{r} \circ I_{r}^{-1} = -P -\lambda \mathrm{id}_A = \tilde{P}.
	\end{align*}
	Thus, $(A, [\ ,\ ]_A, \cdot_A, \delta_{\tau(r)},\Delta_{\tau(r)})$ induces a quadratic Rota-Baxter Poisson algebra $((A, [\ ,\ ]_A, \cdot_A)$, $\mathfrak{B}, \tilde{P})$ of weight $\lambda$ by Theorem~\ref{thm:fpb2qrp}.
	Conversely, by a similar argument, we show that the quadratic Rota-Baxter Poisson algebra $((A, [\ ,\ ]_A, \cdot_A)$, $\mathfrak{B}, \tilde{P})$ of weight $\lambda$ induces the factorizable Poisson bialgebra $(A, [\ ,\ ]_A, \cdot_A, \delta_{\tau(r)},\Delta_{\tau(r)})$ via Theorem~\ref{thm:fpb2qrpx}.
\end{proof}

\section{Quasi-triangular Poisson bialgebras via quasi-triangular differential ASI bialgebras}\label{sec:qpbfqdasi}
In this section, we generalize the construction of Poisson algebras from commutative algebras with a pair of commuting derivations to the context of quasi-triangular bialgebras.
We establish the quasi-triangular and factorizable theories for differential ASI bialgebras, and then construct quasi-triangular and factorizable Poisson bialgebras from quasi-triangular and factorizable (commutative and cocommutative) differential ASI bialgebras respectively.


Recall that an \textbf{associative coalgebra} $(A, \Delta)$ is a vector space $A$ with a linear map $\Delta: A \to A \otimes A$ satisfying the coassociative law:
\begin{equation*}
	(\Delta \otimes \id)\Delta = (\id \otimes \Delta) \Delta.
\end{equation*}
An associative coalgebra $(A, \Delta)$ is called \textbf{cocommutative}
if $\Delta = \tau \Delta$.

\begin{definition}\label{def:asi_bialgebra}
	(\cite{bai2010double})
	An \textbf{antisymmetric infinitesimal bialgebra} or simply an \textbf{ASI bialgebra} is a triple $(A, \cdot_{A}, \Delta)$ consisting of a vector space $A$ and linear maps $\cdot_{A}: A \otimes A \to A$ and $\Delta: A \to A \otimes A$ such that
	\begin{enumerate}
		\item $(A, \cdot_{A})$ is an associative algebra;
		\item $(A, \Delta)$ is an associative coalgebra;
		\item the following equations hold for all $a, b \in A$:
		\begin{align*}
			\Delta(a \cdot_{A} b) &= (R(b) \otimes \id )\Delta(a) + (\id \otimes L(a)) \Delta(b),  \\
			(L(a) \otimes \id - \id \otimes R(a))\Delta(b) &= \tau \left( (\id \otimes R(b) - L(b) \otimes \id )\Delta(a) \right).
		\end{align*}
	\end{enumerate}
\end{definition}

\begin{definition}
	Let $(A, \cdot_{A})$ be an associative algebra.
	A linear map $\partial: A \to A$ is called a \textbf{derivation} if the Leibniz rule is satisfied, i.e.,
	\begin{equation*}
		\partial(a \cdot_{A} b) = \partial(a) \cdot_{A} b + a \cdot_{A} \partial(b), \;\; \forall a, b \in A.
	\end{equation*}
	A \textbf{differential algebra} is a triple $(A, \cdot_{A}, \Phi)$, where $(A, \cdot_{A})$ is an associative algebra and $\Phi=\{\partial_{i}: A \to A\}_{i=1}^m$ is a finite set of commuting derivations.
	A differential algebra $(A, \cdot_{A}, \Phi)$ is called \textbf{commutative} if $(A, \cdot_{A})$ is commutative.
\end{definition}

\begin{definition}\label{def:coderivation}
	(\cite{doi1981homological})
	Let $(A, \Delta)$ be an associative coalgebra.
	A linear map $\eth: A \to A$ is called a \textbf{coderivation} on $(A, \Delta)$ if the following equation holds:
	\begin{equation}
		\Delta \eth = (\eth \otimes \id + \id \otimes \eth) \Delta. \label{eq:coderivation}
	\end{equation}
	A \textbf{differential coalgebra} is a triple $(A, \Delta, \Psi)$,
	consisting of an associative coalgebra $(A, \Delta)$ and a finite set of commuting coderivations $\Psi = \{\eth_k: A \to A\}_{k=1}^m$.
	A differential coalgebra $(A, \Delta, \Psi)$ is called \textbf{cocommutative} if $(A, \Delta)$ is cocommutative.
\end{definition}

\begin{definition}\label{def:dasi}
	{\rm (\cite{lin2023differential})}
	A \textbf{differential antisymmetric infinitesimal bialgebra} or simply a \textbf{differential ASI bialgebra} is a quintuple
	$(A, \cdot_{A}, \Delta, \Phi, \Psi)$ satisfying
	\begin{enumerate}
		\item $(A, \cdot_{A}, \Delta)$ is an ASI bialgebra;
		\item $(A, \cdot_{A}, \Phi = \{\partial_k\}_{k=1}^m)$ is a differential algebra;
		\item $(A, \Delta, \Psi = \{\eth_k\}_{k=1}^m)$ is a differential coalgebra;
		\item $(A, \cdot_{A}, \Phi)$ is \textbf{$\Psi$-admissible}, that is, the following equations hold:
		\begin{align}
			\eth_k(a) \cdot_A b &= a \cdot_A \partial_k(b) + \eth_k(a \cdot_A b), \label{eq:qadm1} \\
			a \cdot_A \eth_k(b) &= \partial_k(a) \cdot_A b + \eth_k(a \cdot_A b), \;\;
			\forall a,b \in A, \;\; \forall  k=1, \cdots, m. \label{eq:qadm2}
		\end{align}
		\item $(A, \Delta^{*}, \Psi^{*})$ is \textbf{$\Phi^{*}$-admissible}, that is, the following equations hold:
		\begin{align}
			(\partial_k \otimes \id) \Delta &= (\id \otimes \eth_k) \Delta + \Delta \partial_k, \label{eq:psadm1} \\
			(\id \otimes \partial_k) \Delta &= (\eth_k \otimes \id) \Delta + \Delta \partial_k, \;\; \forall  k=1, \cdots, m. \label{eq:psadm2}
		\end{align}
	\end{enumerate}
	A differential ASI bialgebra $(A, \cdot_{A}$, $\Delta$, $\Phi, \Psi)$ is called \textbf{commutative and cocommutative} if $(A$, $\cdot_{A}$, $\Phi)$ is commutative and $(A, \Delta, \Psi)$ is cocommutative.
\end{definition}

\begin{definition}
	Let $(A, \cdot_{A}, \Phi = \{\partial_k\}_{k=1}^m)$ be a differential algebra.
	Suppose that $r \in A \otimes A$ and $\Psi = \{\eth_k: A \to A\}_{k=1}^m$ is a set of commuting linear maps.
	Then $\bm{A}(r) = 0$ with conditions given by the following equations
	\begin{align}
		(\partial_k \otimes \id - \id \otimes \eth_k)(r) &= 0, \label{eq:pqadm1} \\
		(\eth_k \otimes \id - \id \otimes \partial_k)(r) &= 0, \;\; \forall i=1,2, \cdots, m, \label{eq:pqadm2} 
	\end{align}
	is called the \textbf{$\Psi$-admissible associative Yang-Baxter equation} in $(A, \cdot_{A}, \Phi)$ or simply \textbf{$\Psi$-admissible AYBE} in $(A, \cdot_{A}, \Phi)$.
\end{definition}

\begin{lemma}\label{lem:aof}
	Let $(A, \cdot_{A}, \Phi = \{\partial_k\}_{k=1}^m)$ be a differential algebra.
	Suppose that $r \in A \otimes A$ and $\Psi = \{\eth_k: A \to A\}_{k=1}^m$ is a set of commuting linear maps.
	Then Eqs.~\eqref{eq:pqadm1}-\eqref{eq:pqadm2} hold if and only if
	\begin{equation*}
		\partial_{i}r_{+} = r_{+}\eth_{i}^{*}, \;\;
		\partial_{i}r_{-} = r_{-}\eth_{i}^{*}, \;\; \forall i = 1,2, \cdots, m.
	\end{equation*}
\end{lemma}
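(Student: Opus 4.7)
The plan is a direct duality computation that unpacks the definitions of $r_{+}$ and $r_{-}$. Fix a basis $\{e_j\}$ of $A$ with dual basis $\{e_j^*\}$ and expand $r = \sum_{j,k} r_{jk}\, e_j \otimes e_k$. From the definitions of $r_{\pm}$ recalled before the statement, one reads off $r_{+}(x^*) = \sum_{j,k} r_{jk}\langle e_j, x^*\rangle e_k$ and $r_{-}(y^*) = -\sum_{j,k} r_{jk}\langle e_k, y^*\rangle e_j$ for all $x^*, y^* \in A^*$. These explicit formulas let me transport $\partial_i$ and $\eth_i^*$ through the pairing using the standard identity $\langle \eth_i^*(x^*), a\rangle = \langle x^*, \eth_i(a)\rangle$ and the analogue for $\partial_i$.

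Next, I would test the identity $\partial_i r_{+} = r_{+}\eth_i^*$ against an arbitrary $y^* \in A^*$. A short manipulation yields $\langle \partial_i r_{+}(x^*), y^*\rangle = \langle (\id \otimes \partial_i)(r), x^*\otimes y^*\rangle$ on one side and $\langle r_{+}\eth_i^*(x^*), y^*\rangle = \langle (\eth_i \otimes \id)(r), x^*\otimes y^*\rangle$ on the other. Since these must agree for all $x^*, y^* \in A^*$, the operator equality is equivalent to $(\eth_i \otimes \id - \id \otimes \partial_i)(r) = 0$, which is precisely Eq.~\eqref{eq:pqadm2}. The same procedure, applied to $\partial_i r_{-} = r_{-}\eth_i^*$ and keeping track of the sign in $r_{-}$, gives $(\partial_i \otimes \id - \id \otimes \eth_i)(r) = 0$, which is Eq.~\eqref{eq:pqadm1}. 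Conversely, each admissibility equation can be read back as an operator identity on $A^*$ by the same pairings, so both implications follow at once.

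The whole argument is purely linear-algebraic and presents no genuine obstacle; the one point demanding care is the crossed correspondence between the two operator identities and the two admissibility equations, namely that $\partial_i r_{+} = r_{+}\eth_i^*$ corresponds to Eq.~\eqref{eq:pqadm2} while $\partial_i r_{-} = r_{-}\eth_i^*$ corresponds to Eq.~\eqref{eq:pqadm1}, together with the sign carried by $r_{-}$.
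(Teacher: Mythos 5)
Your proposal is correct and follows essentially the same route as the paper: a direct duality computation pairing $\partial_i r_{\pm}$ and $r_{\pm}\eth_i^*$ against arbitrary covectors to identify each operator identity with the corresponding tensor equation, with the same crossed correspondence ($\partial_i r_{+}=r_{+}\eth_i^*$ $\leftrightarrow$ Eq.~\eqref{eq:pqadm2}, $\partial_i r_{-}=r_{-}\eth_i^*$ $\leftrightarrow$ Eq.~\eqref{eq:pqadm1}). The basis expansion you introduce is an inessential variation; the paper works directly with the pairing $\langle r, x^*\otimes y^*\rangle$.
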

\begin{proof}
	For all $x^{*}, y^*\in A^{*}$, we have 
	\begin{align*}
		\langle \partial_{i}r_{+}(x^{*}), y^{*}\rangle &= \langle r, x^* \otimes \partial_{i}^{*}(y^{*})\rangle = \langle (\id \otimes \partial_{i})(r), x^{*} \otimes y^{*}\rangle, \\
		\langle r_{+}(\eth_{i}^{*}(x^{*})), y^{*}\rangle &= \langle r, \eth_{i}^{*}(x^*) \otimes y^{*} \rangle = \langle (\eth_{i} \otimes \id)(r), x^{*} \otimes y^{*}\rangle, \\
		\langle \partial_{i}r_{-}(x^{*}), y^{*}\rangle &= -\langle r, \partial_{i}^{*}(y^{*}) \otimes x^{*}  \rangle = -\langle (\partial_{i} \otimes \id)(r), y^{*} \otimes x^{*}\rangle, \\
		\langle r_{-}(\eth_{i}^{*}(x^{*})), y^{*}\rangle &= -\langle r, y^{*} \otimes \eth_{i}^{*}(x^{*}) \rangle = -\langle (\id \otimes \eth_{i})(r), y^{*} \otimes x^{*}\rangle,
	\end{align*}
	which completes the proof.
\end{proof}

\begin{lemma}\label{cdasibi}
	Let $(A, \cdot_{A}, \Phi)$ be a $\Psi$-admissible differential algebra and $r \in A \otimes A$.
	If $r \in A \otimes A$ is a solution of the $\Psi$-admissible AYBE in $(A, \cdot_{A}, \Phi)$ and the symmetric part of $r$ satisfies Eq.~\eqref{eq:liv},
	then $(A, \cdot_{A}, \Delta, \Phi, \Psi)$ is a differential ASI bialgebra, 
	where $\Delta: A \to A \otimes A$ is defined by 
	\begin{equation}
		\Delta(a) = (\id \otimes L(a) - R(a) \otimes \id)(r), \;\; \forall a \in A. \label{eq:dcbd}
	\end{equation}
\end{lemma}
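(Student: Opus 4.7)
The plan is to verify the five axioms of Definition~\ref{def:dasi} in sequence; axioms (2) and (4) hold by hypothesis, so the real work concerns axioms (1), (3), and (5). First, I would establish that $(A, \cdot_A, \Delta)$ is an ASI bialgebra. Since $\bm{A}(r) = 0$ and the symmetric part $S$ of $r$ satisfies Eq.~\eqref{eq:liv}, this is exactly the coboundary construction for ASI bialgebras: it follows from the associative portion of Theorem~\ref{thm:cbd} (conditions~(2), (4), and (5) with the Lie-bracket terms ignored), equivalently from Bai's original construction in \cite{bai2010double}.

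Next, I would verify that each $\eth_k$ is a coderivation on $(A, \Delta)$, that is, Eq.~\eqref{eq:coderivation} holds. Writing $r = \sum_i a_i \otimes b_i$, one expands both $\Delta \eth_k(a)$ and $(\eth_k \otimes \id + \id \otimes \eth_k)\Delta(a)$ via Eq.~\eqref{eq:pcbd2}. The $\Psi$-admissibility of $(A, \cdot_A, \Phi)$, namely Eqs.~\eqref{eq:qadm1}--\eqref{eq:qadm2}, lets me rewrite $\eth_k(a \cdot a_i)$ and $\eth_k(a \cdot b_i)$ in terms of $\partial_k$ applied to $a_i$ or $b_i$. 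After these substitutions and the obvious cancellations, the remaining discrepancy is eliminated precisely by the $\Psi$-admissible AYBE identities Eqs.~\eqref{eq:pqadm1}--\eqref{eq:pqadm2} (which, by Lemma~\ref{lem:aof}, amount to the intertwining relations $\partial_k r_\pm = r_\pm \eth_k^*$), applied in the appropriate tensor slot.

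For axiom~(5), I would directly compute $(\partial_k \otimes \id)\Delta(a) - (\id \otimes \eth_k)\Delta(a) - \Delta\partial_k(a)$ and its mirror by distributing $\partial_k$ across products via the Leibniz rule, then invoking Eqs.~\eqref{eq:pqadm1}--\eqref{eq:pqadm2} to exchange $\partial_k$ for $\eth_k$ on the opposite factor of $r$; the terms involving $a\cdot\partial_k(a_i)$ and $\partial_k(a)\cdot b_i$ cancel against the contributions from $\Delta\partial_k(a)$.

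The main obstacle will be the careful bookkeeping in axioms (3) and (5): the expansions generate many tensor terms, and the cancellations succeed only when the admissibility identities for $(A, \cdot_A, \Phi)$ are paired with the correct $\Psi$-admissible AYBE relation applied to the correct tensor factor. Once the formal structure is set up, the calculations themselves are routine manipulations of sums of tensor products.
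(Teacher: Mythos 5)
Your proposal is correct in outline, but it takes a genuinely different route from the paper: the paper's entire proof of this lemma is the citation ``It follows from \cite[Corollary~4.4]{lin2023differential}'', whereas you reconstruct a self-contained verification. Your reconstruction is essentially the argument underlying that citation, and the computations do go through as sketched. Writing $r=\sum_i a_i\otimes b_i$, the discrepancy $\Delta\eth_k(a)-(\eth_k\otimes\id+\id\otimes\eth_k)\Delta(a)$ reduces, after applying Eq.~\eqref{eq:qadm1} to $\eth_k(a)\cdot a_i$ and $\eth_k(a)\cdot b_i$, to
$(\id\otimes L(a))\bigl((\id\otimes\partial_k-\eth_k\otimes\id)(r)\bigr)+(L(a)\otimes\id)\bigl((\id\otimes\eth_k-\partial_k\otimes\id)(r)\bigr)$,
which vanishes by Eqs.~\eqref{eq:pqadm1}--\eqref{eq:pqadm2}; axiom (5) cancels the same way using Eq.~\eqref{eq:qadm2} and the Leibniz rule, exactly as you describe. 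Two small corrections to your bookkeeping. First, in axiom (3) the admissibility identity \eqref{eq:qadm1} is applied to the products $\eth_k(a)\cdot a_i$ and $\eth_k(a)\cdot b_i$ (with $a$ in the left factor), not to $\eth_k(a\cdot a_i)$ and $\eth_k(a\cdot b_i)$; the latter terms occur identically on both sides and simply cancel. Second, for axiom (1) you should note that coassociativity of $\Delta$ is also part of what must be checked; it is supplied by the same coboundary ASI theory you invoke (the $\bm{A}(r)=0$ conditions of Theorem~\ref{thm:cbd}, or \cite{bai2010double}), bearing in mind that Eq.~\eqref{eq:pcbd2} and Eq.~\eqref{eq:liv} are the commutative-case forms of the general coboundary formula and $L$-invariance condition, which is the setting in which this lemma is actually applied. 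What your approach buys is a proof the reader can check without leaving the paper; what the paper's citation buys is brevity.
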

\begin{proof}
	It follows from \cite[Corollary~4.4]{lin2023differential}. 
\end{proof}

\begin{definition}
	Let $(A, \cdot_A, \Phi)$ be a differential algebra and $r \in A \otimes A$.
	Then $r$ is called \textbf{$L$-invariant} if the following equation holds:
	\begin{equation}
		(L(a) \otimes \id - \id \otimes R(a))(r) = 0, \;\; \forall a \in A. \label{eq:livnc}
	\end{equation}
\end{definition}

Recall that a differential ASI bialgebra $(A, \cdot_{A}, \Delta, \Phi, \Psi)$ is called \textbf{coboundary} if $\Delta$ is defined by Eq.~\eqref{eq:dcbd} for some $r \in A \otimes A$.
A coboundary differential ASI bialgebra is denoted by $(A, \cdot_{A}, \Delta_{r}, \Phi, \Psi)$.

\begin{definition}\label{quasidab}
	Let $(A, \cdot_{A}, \Phi)$ be a $\Psi$-admissible differential algebra. 
	If $r$ is a solution of the $\Psi$-admissible AYBE in $(A, \cdot_{A}, \Phi)$ and the symmetric part of $r \in A \otimes  A$ is $L$-invariant, then the coboundary differential ASI bialgebra $(A, \cdot_{A}, \Delta_{r}, \Phi, \Psi)$ induced by $r$ is called a \textbf{quasi-triangular differential ASI bialgebra}.
	In particular, if $r$ is antisymmetric, then $(A, \cdot_{A}, \Delta_{r}, \Phi, \Psi)$ is called a \textbf{triangular differential ASI bialgebra}.
	A quasi-triangular differential ASI bialgebra $(A, \cdot_{A}$, $\Delta_{r}$, $\Phi$, $\Psi)$ is called \textbf{factorizable} if the symmetric part of $r$ is nondegenerate.
\end{definition}

The following proposition justifies the terminology of a factorizable differential ASI bialgebra.
\begin{definition}
	Let $(A, \cdot_{A}, \Phi_{A}=\{\partial_{A, i}\}_{i=1}^m)$ and $(B, \cdot_{B}, \Phi_{B}=\{\partial_{B, i}\}_{i=1}^m)$ be two differential algebras.
	A linear map $\varphi: (A, \cdot_{A}, \Phi_{A}) \to (B, \cdot_{B}, \Phi_{B})$ is called a \textbf{homomorphism} of differential algebras if $\varphi: (A, \cdot_{A}, \Phi_{A}) \to (B, \cdot_{B}, \Phi_{B})$ is a homomorphism of associative algebras satisfying 
	\begin{equation*}
		\varphi \circ \partial_{A, i} = \partial_{B, i} \circ \varphi, \;\; \forall i = 1, 2, \cdots, m.
	\end{equation*}
	If in addition $\varphi$ is a linear isomorphism, then $\varphi$ is called an \textbf{isomorphism} of differential algebras.
\end{definition}

\begin{proposition}\label{prop:fdt}
	Let $(A, \cdot_{A}, \Delta_{r}, \Phi, \Psi)$ be a factorizable differential ASI bialgebra.
	Then $\mathrm{Im}(r_{+} \oplus r_{-})$ is a differential subalgebra of the direct sum differential algebra $A \oplus A$, 
	which is isomorphic to the differential algebra $(A^*, \cdot_r, \Psi^{*})$, where $\cdot_r: A^* \otimes A^* \to A^*$ is defined by 
	\begin{equation}
		x^* \cdot_r y^* = R^*(r_{+}(x^*)) y^* + L^*(r_{-}(y^*)) x^*, \;\; \forall x^*,y^* \in A^*. \label{eq:dcbdr}
	\end{equation}
	Moreover, every $a \in A$ has a unique decomposition 
	$a = a_{+} - a_{-}$ with $(a_{+}, a_{-}) \in \mathrm{Im}(r_{+} \oplus r_{-})$.
\end{proposition}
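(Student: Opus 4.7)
The plan is to transcribe the proof of Proposition~\ref{prop:fpt} into the differential ASI setting, with Lemma~\ref{lem:aof} providing the intertwining of $r_{\pm}$ with the derivations. First, since $r$ is in particular a solution of the AYBE on $(A, \cdot_A)$ whose symmetric part is $L$-invariant, the ASI analog of Theorem~\ref{thm:pybe2h} (established in \cite{sheng2023quasi}) yields that $r_{+}, r_{-}: (A^{*}, \cdot_r) \to (A, \cdot_A)$ are associative algebra homomorphisms, where $\cdot_r$ from Eq.~\eqref{eq:dcbdr} is the multiplication on $A^{*}$ dual to $\Delta_r$.

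Next, I would verify that $(A^{*}, \cdot_r, \Psi^{*})$ is a differential algebra. Since $\Psi$ is a set of commuting coderivations on $(A, \Delta_r)$, dualizing the coderivation identity \eqref{eq:coderivation} turns each $\eth_k^{*}$ into a derivation of $(A^{*}, \cdot_r)$, and commutativity of $\Psi^{*}$ is immediate from that of $\Psi$. Applying Lemma~\ref{lem:aof} to the $\Psi$-admissible AYBE conditions \eqref{eq:pqadm1}--\eqref{eq:pqadm2} gives $\partial_k r_{+} = r_{+}\eth_k^{*}$ and $\partial_k r_{-} = r_{-}\eth_k^{*}$ for each $k$, upgrading $r_{+}, r_{-}$ to differential algebra homomorphisms from $(A^{*}, \cdot_r, \Psi^{*})$ to $(A, \cdot_A, \Phi)$. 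Consequently $r_{+} \oplus r_{-}$ is a differential algebra homomorphism into the direct sum $A \oplus A$, and its image is a differential subalgebra.

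Finally, the factorizability hypothesis says exactly that $I_r = r_{+} - r_{-}: A^{*} \to A$ is a linear isomorphism, so $\ker(r_{+} \oplus r_{-}) \subseteq \ker I_r = 0$, which makes $r_{+} \oplus r_{-}$ injective and thus yields the desired isomorphism $(A^{*}, \cdot_r, \Psi^{*}) \cong \mathrm{Im}(r_{+} \oplus r_{-})$ of differential algebras. Setting $a_{+} = r_{+}(I_r^{-1}(a))$ and $a_{-} = r_{-}(I_r^{-1}(a))$ then furnishes the decomposition $a = a_{+} - a_{-}$ with $(a_{+}, a_{-}) \in \mathrm{Im}(r_{+} \oplus r_{-})$, and uniqueness follows again from $I_r$ being bijective. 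The only nonroutine point is the dualization in the second step, i.e., confirming that the coderivation condition with respect to $\Delta_r$ produces the correct Leibniz rule for the explicit multiplication $\cdot_r$ in Eq.~\eqref{eq:dcbdr}; once this is in place, everything else is a direct transcription from the Poisson case.
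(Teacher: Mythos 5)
Your proposal is correct and follows essentially the same route as the paper: the paper likewise invokes the associative result from \cite{sheng2023quasi} (its Proposition~3.2) for the algebra-homomorphism, subalgebra, and unique-decomposition statements, and then uses Lemma~\ref{lem:aof} to upgrade $r_{+}, r_{-}$ to differential algebra homomorphisms. Your extra care in checking that $\Psi^{*}$ consists of derivations of $(A^{*}, \cdot_r)$ is a point the paper leaves implicit, but it does not change the argument.
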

\begin{proof}
	By \cite[Proposition~3.2]{sheng2023quasi}, we have that both $r_{+}, r_{-}: (A^{*}, \cdot_{r}) \to (A, \cdot_{A})$ are associative algebra homomorphisms, $\mathrm{Im}(r_{+} \oplus r_{-})$ is an associative subalgebra of the direct sum associative algebra $A \oplus A$, which is isomorphic to the associative algebra $(A^*, \cdot_r)$, and every $a \in A$ has a unique decomposition 
	$a = a_{+} - a_{-}$ with $(a_{+}, a_{-}) \in \mathrm{Im}(r_{+} \oplus r_{-})$.
	Furthermore, by Lemma~\ref{lem:aof}, we show that both $r_{+}, r_{-}: (A^{*}, \cdot_{r}, \Psi^{*}) \to (A, \cdot_{A}, \Phi)$ are differential algebra homomorphisms.
	Therefore, $\mathrm{Im}(r_{+} \oplus r_{-})$ is a differential subalgebra of the direct sum differential algebra $A \oplus A$, 
	which is isomorphic to the differential algebra $(A^*, \cdot_r, \Psi^{*})$.
\end{proof}

Let $(A, \cdot_{A}, \Delta, \Phi, \Psi)$ be an arbitrary differential ASI bialgebra and $(\mathfrak{A}, \cdot_{\mathfrak{A}})$ be the associative algebra structure on $A \oplus A^*$ obtained from the matched pair of associative algebras $(A$, $A^*$, $R^*_{\cdot_{A}}$, $L^*_{\cdot_{A}}$, $R^*_{\cdot_{A^*}}$, $L^*_{\cdot_{A^*}})$ (\cite{lin2023differential}).
Moreover, let $\{e_1, e_2, \cdots, e_n\}$ be a basis of $A$ and $\{e_1^*, e_2^*, \cdots, e_n^*\}$ be the dual basis of $A^*$.
Define 
\begin{equation*}
	r = \sum_{i}e_i \otimes e_i^* \in A \otimes A^* \subset \mathfrak{A} \otimes \mathfrak{A}
\end{equation*}
and 
\begin{equation*}
	\Delta_{r}(u) = (\id \otimes L_{\cdot_{\mathfrak{A}}}(u) - R_{\cdot_{\mathfrak{A}}}(u) \otimes \id)(r), \;\; \forall u \in \mathfrak{A},
\end{equation*}
then $(\mathfrak{A}, \cdot_{\mathfrak{A}}, \Delta_{r}, \Phi + \Psi^*, \Psi + \Phi^*)$ is a coboundary differential ASI bialgebra (see \cite[Theorem~4.5]{lin2023differential} for more details).
\begin{theorem}\label{thm:adfd}
	The differential ASI bialgebra $(\mathfrak{A}, \cdot_{\mathfrak{A}}, \Delta_{r}, \Phi + \Psi^*, \Psi + \Phi^*)$ is a quasi-triangular differential ASI bialgebra. 
	Furthermore, it is factorizable.
\end{theorem}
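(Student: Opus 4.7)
The plan is to mirror the strategy used in the proof of Theorem~\ref{thm:spfb}, the corresponding result for Poisson bialgebras. Since \cite[Theorem~4.5]{lin2023differential} already guarantees that $(\mathfrak{A}, \cdot_{\mathfrak{A}}, \Delta_{r}, \Phi + \Psi^*, \Psi + \Phi^*)$ is a coboundary differential ASI bialgebra, and in particular that $r$ satisfies the $(\Psi + \Phi^*)$-admissible AYBE in the $(\Psi+\Phi^*)$-admissible differential algebra $(\mathfrak{A}, \cdot_{\mathfrak{A}}, \Phi+\Psi^*)$, the only ingredient still missing for quasi-triangularity (Definition~\ref{quasidab}) is the $L$-invariance of the symmetric part $S$ of $r$. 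Factorizability then reduces to checking that $I_r$ is a linear isomorphism.

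First I would identify $\mathfrak{A}^* = A^* \oplus A$ with the natural pairing $\langle (a, x^*), (y^*, b)\rangle = \langle a, y^*\rangle + \langle b, x^*\rangle$. Under this identification, $S = \frac{1}{2}\sum_i(e_i \otimes e_i^* + e_i^* \otimes e_i)$ gives $S_+(y^*, b) = \frac{1}{2}(b, y^*)$, exactly as in the Poisson case. The $L$-invariance condition Eq.~\eqref{eq:livnc} for $S$ is equivalent, by the associative analogue of Lemma~\ref{lem:adi}, to the pointwise identity
\begin{equation*}
    S_+\bigl(L_{\cdot_{\mathfrak{A}}}^*(u)\,v^*\bigr) = S_+(v^*) \cdot_{\mathfrak{A}} u, \qquad \forall u \in \mathfrak{A},\ v^* \in \mathfrak{A}^*.
\end{equation*}
Taking $u = (a, x^*)$ and $v^* = (y^*, b)$, both sides expand through the matched pair $(A, A^*, R^*_{\cdot_{A}}, L^*_{\cdot_{A}}, R^*_{\cdot_{A^*}}, L^*_{\cdot_{A^*}})$ governing $\cdot_{\mathfrak{A}}$ and can be checked to agree component by component.

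A cleaner conceptual route I would use as a cross-check is to observe that $S = \tfrac{1}{2}r_{\mathfrak{B}_d}$, where $\mathfrak{B}_d$ is the canonical nondegenerate symmetric bilinear form on $\mathfrak{A}$ defined by $\mathfrak{B}_d\bigl((a, x^*), (b, y^*)\bigr) = \langle a, y^*\rangle + \langle b, x^*\rangle$. The matched pair structure forces $\mathfrak{B}_d$ to be invariant with respect to $\cdot_{\mathfrak{A}}$, and then the associative analogue of Proposition~\ref{qparinv} delivers the $L$-invariance of $r_{\mathfrak{B}_d}$, hence of $S$, immediately.

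Finally, for factorizability, a direct pairing computation shows $r_+(y^*, b) = (0, y^*)$ and $r_-(y^*, b) = -(b, 0)$, so $I_r(y^*, b) = (b, y^*)$, which is manifestly a linear isomorphism $\mathfrak{A}^* \to \mathfrak{A}$. The main obstacle is the second step: verifying the $L$-invariance identity requires some care in unpacking $\cdot_{\mathfrak{A}}$ via the matched pair actions $R^*_{\cdot_{A}}, L^*_{\cdot_{A}}, R^*_{\cdot_{A^*}}, L^*_{\cdot_{A^*}}$, but once this bookkeeping is in place the argument runs entirely parallel to Theorem~\ref{thm:spfb}.
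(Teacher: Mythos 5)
Your proposal is correct and follows essentially the same route as the paper's (even terser) proof: both rely on the proof of Theorem~4.5 of \cite{lin2023differential} for the $(\Psi+\Phi^*)$-admissible AYBE, verify the $L$-invariance of the symmetric part $S$ by unpacking $\cdot_{\mathfrak{A}}$ (your Frobenius-form observation $S=\tfrac12 r_{\mathfrak{B}_d}$ is a valid and arguably cleaner way to do this), and compute $I_r(y^*,b)=(b,y^*)$ directly to get factorizability. One small caution: being a coboundary differential ASI bialgebra does not by itself imply that $r$ satisfies the AYBE, so the justification must come from the \emph{proof} of the cited Theorem~4.5 rather than its statement, which is exactly how the paper phrases it.
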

\begin{proof}
	The proof of \cite[Theorem~4.5]{lin2023differential} implies that $r$ is a solution of the $(\Psi + \Phi^*)$-admissible AYBE in $(\mathfrak{A}, \cdot_{\mathfrak{A}}, \Phi + \Psi^*)$.
	By suitable modification to the proof \cite[Theorem~4.5]{lin2023differential}, we show that the symmetric part of $r$ is $L$-invariant.
	Moreover, by the proof of Theorem~\ref{thm:spfb}, we have $I_{r}$ is an isomorphism of vector spaces.
	Hence, the differential ASI bialgebra $(\mathfrak{A}, \cdot_{\mathfrak{A}}, \Delta_{r}, \Phi + \Psi^*, \Psi + \Phi^*)$ is factorizable.
\end{proof}

\begin{definition}
	A bilinear form $\mathfrak{B}(\ ,\ )$ on an associative algebra $(A, \cdot_{A})$ is called \textbf{invariant} if
	\begin{equation*}
		\mathfrak{B}(a \cdot_{A} b, c) = \mathfrak{B}(a, b \cdot_{A} c), \;\;
		\forall a, b, c \in A.
	\end{equation*}
	A \textbf{Frobenius algebra} $(A, \cdot_{A}, \mathfrak{B})$ is an associative algebra $(A, \cdot_{A})$ with a nondegenerate invariant bilinear form $\mathfrak{B}(\ , \ )$.
	A Frobenius algebra $(A, \cdot_{A}, \mathfrak{B})$ is called \textbf{symmetric} if $\mathfrak{B}(\ , \ )$ is symmetric.
\end{definition}

\begin{definition}
	A \textbf{differential Frobenius algebra} is a quadruple $(A, \cdot_{A}, \Phi, \mathfrak{B})$, where $(A, \cdot_{A}$, $\Phi = \{\partial_k\}_{k=1}^m)$ is a differential algebra
	and $(A, \cdot_{A}, \mathfrak{B})$ is a Frobenius algebra. It is
	called {\bf symmetric} if $\mathfrak B$ is symmetric.
	For all $k=1,\cdots, m$, let $\hat{\partial_k}$ be the adjoint linear operator of $\partial_k$ under the nondegenerate bilinear form $\mathfrak{B}$:
	\begin{equation*}
		\mathfrak{B}(\partial_k(a), b) = \mathfrak{B}(a, \hat{\partial_k}(b)), \;\;
		\forall a, b \in A.
	\end{equation*}
	We call $\hat{\Phi} := \{\hat{\partial_k}\}_{k=1}^m$ the \textbf{adjoint of $\Phi = \{\partial_k\}_{k=1}^m$ with respect to $\mathfrak B$}.
\end{definition}
Note that $\hat{\Phi}$ is admissible to $(A, \cdot_{A}, \Phi)$ (\cite[Proposition~3.3]{lin2023differential}).

\begin{definition}
	The triple $((A, \cdot_A, \Phi = \{\partial_k\}_{k=1}^m), \mathfrak{B}, P)$ is called a \textbf{symmetric Rota-Baxter differential Frobenius algebra of weight $\lambda$} if $(A, \cdot_A, \Phi, \mathfrak{B})$ is a symmetric differential Frobenius algebra and $(A, \cdot_A, P)$ is a Rota-Baxter algebra of weight $\lambda$ satisfying the compatibility condition given by Eq.~\eqref{eq:qrbp} such that 
	\begin{equation*}
		\partial_{i} P = P \partial_{i}, \;\; \forall i=1, 2, \cdots, m.
	\end{equation*}
\end{definition}

\begin{lemma}\label{lem:dfof}
	Let $((A, \cdot_{A}, \Phi = \{\partial_k\}_{k=1}^m), \mathfrak{B}, P)$ be a symmetric Rota-Baxter differential Frobenius algebra of weight $\lambda$, and $I_\mathfrak{B}: A^* \to A$ be the induced linear isomorphism by $\mathfrak{B}$. 
	Let $r \in A \otimes A$ be the 2-tensor form of $P \circ I_\mathfrak{B}$ given by Eq.~\eqref{eq:pnbf}
	and $\hat{\Phi} = \{\hat{\partial_k}\}_{k=1}^m$ be the adjoint of $\Phi$ with respect to $\mathfrak B$. 
	Then 
	\begin{equation*}
		\partial_{i}I_{\mathfrak{B}} = I_{\mathfrak{B}}\hat{\partial_{i}}^{*}, \;\;
		\partial_{i}r_{+} = r_{+}\hat{\partial_{i}}^{*}, \;\; \forall i = 1, 2, \cdots, m.
	\end{equation*}
\end{lemma}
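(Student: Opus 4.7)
The plan is to derive both equations directly from the definitions, using only the adjointness property defining $\hat{\partial_i}$ with respect to $\mathfrak{B}$ and the commutation hypothesis $\partial_i P = P \partial_i$ built into the notion of a symmetric Rota-Baxter differential Frobenius algebra. The lemma is essentially a bookkeeping statement, and I would present it in two short steps.

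For the first identity $\partial_i I_\mathfrak{B} = I_\mathfrak{B} \hat{\partial_i}^*$, I would fix $x^* \in A^*$ and $b \in A$ and expand the pairing $\mathfrak{B}(\partial_i I_\mathfrak{B}(x^*), b)$ in two ways. On the one hand, moving $\partial_i$ across via its $\mathfrak{B}$-adjoint yields $\mathfrak{B}(I_\mathfrak{B}(x^*), \hat{\partial_i}(b)) = \langle x^*, \hat{\partial_i}(b)\rangle = \langle \hat{\partial_i}^*(x^*), b\rangle$, where the middle equality is Eq.~\eqref{eq:sxrbt1} and the last is the definition of the dual map. On the other hand, Eq.~\eqref{eq:sxrbt1} identifies this pairing with $\langle I_\mathfrak{B}^{-1}(\partial_i I_\mathfrak{B}(x^*)), b\rangle$. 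Comparing the two expressions for all $b$ and invoking nondegeneracy of $\mathfrak{B}$ gives $I_\mathfrak{B}^{-1}\partial_i I_\mathfrak{B}(x^*) = \hat{\partial_i}^*(x^*)$, which is the asserted identity.

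For the second identity $\partial_i r_{+} = r_{+} \hat{\partial_i}^*$, I would first note that the phrase ``$r$ is the 2-tensor form of $P \circ I_\mathfrak{B}$'' in the sense of Eq.~\eqref{eq:sxrbt2} means precisely $r_{+} = P \circ I_\mathfrak{B}$, by comparing the defining pairing of $r_{+}$ (in the paragraph preceding Proposition~\ref{Dj:randtr}) with the pairing in Eq.~\eqref{eq:sxrbt2}. With this identification in hand, the identity reduces to a short chain: $\partial_i r_{+} = \partial_i P \circ I_\mathfrak{B} = P \circ \partial_i I_\mathfrak{B} = P \circ I_\mathfrak{B} \circ \hat{\partial_i}^* = r_{+} \circ \hat{\partial_i}^*$, where the second equality uses $\partial_i P = P \partial_i$ and the third applies the first identity just proved.

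There is no genuine obstacle. The only point that deserves a line of explanation is the unambiguous interpretation of ``2-tensor form,'' since the bijection $A \otimes A \leftrightarrow \mathrm{Hom}(A^*, A)$ sending $r \mapsto r_{+}$ is exactly the one encoded by Eq.~\eqref{eq:sxrbt2}; once this is flagged, the rest is formal.
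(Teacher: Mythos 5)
Your proposal is correct and follows essentially the same route as the paper: the first identity is obtained by the same chain of pairings (the paper writes it as a single string of equalities ending in $\mathfrak{B}(I_{\mathfrak{B}}\hat{\partial_i}^{*}(x^*),a)$ rather than comparing two expansions, but the content is identical), and the second identity is exactly the paper's computation $\partial_i r_{+}=\partial_i P I_{\mathfrak{B}}=P\partial_i I_{\mathfrak{B}}=PI_{\mathfrak{B}}\hat{\partial_i}^{*}=r_{+}\hat{\partial_i}^{*}$, using $r_{+}=P\circ I_{\mathfrak{B}}$.
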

\begin{proof}
	For all $a \in A$ and $x^{*} \in A^{*}$, we have 
	\begin{equation*}
		\mathfrak{B}(\partial_{i}I_{\mathfrak{B}}(x^{*}), a) = \mathfrak{B}(I_{\mathfrak{B}}(x^{*}), \hat{\partial_{i}}(a)) = \langle x^*, \hat{\partial_{i}}(a)\rangle = \langle \hat{\partial_{i}}^{*}(x^*), a\rangle = \mathfrak{B}(I_{\mathfrak{B}}\hat{\partial_{i}}^{*}(x^*), a),
	\end{equation*}
	that is, $\partial_{i}I_{\mathfrak{B}} = I_{\mathfrak{B}}\hat{\partial_{i}}^{*}$.
	Therefore,
	\begin{equation*}
		\partial_{i}r_{+} = \partial_{i}P I_{\mathfrak{B}} = P \partial_{i} I_{\mathfrak{B}} = PI_{\mathfrak{B}}\hat{\partial_{i}}^{*} = r_{+}\hat{\partial_{i}}^{*}.
	\end{equation*}
	The proof is completed.
\end{proof}

\begin{proposition}\label{rbfw0rd}
	Let $\left((A, \cdot_A, \Phi), \mathfrak{B}, P\right)$ be a symmetric Rota-Baxter differential Frobenius algebra of weight $0$ and $I_\mathfrak{B}: A^* \to A$ be the induced linear isomorphism by $\mathfrak{B}$.
	Then $(A, \cdot_A, \Delta_r, \Phi, \hat{\Phi})$ is a triangular differential ASI bialgebra where $r\in A \otimes A$ is the 2-tensor form of $P \circ I_\mathfrak{B}$ given by Eq.~\eqref{eq:pnbf}  and $\hat{\Phi}$ is the adjoint of $\Phi$ with respect to $\mathfrak B$.
\end{proposition}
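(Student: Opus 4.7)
The plan is to verify that $r$ and the pair $(\Phi, \hat\Phi)$ fulfill the conditions of Definition~\ref{quasidab} for a triangular differential ASI bialgebra: namely, that $r$ is antisymmetric, that $r$ is a solution of the $\hat\Phi$-admissible AYBE in $(A, \cdot_A, \Phi)$, and that the symmetric part of $r$ is $L$-invariant (automatic once antisymmetry is established). Combined with the $\hat\Phi$-admissibility of $(A, \cdot_A, \Phi)$ supplied by \cite[Proposition~3.3]{lin2023differential}, this produces the triangular differential ASI bialgebra $(A, \cdot_A, \Delta_r, \Phi, \hat\Phi)$ via the lemma stated just before Definition~\ref{quasidab}.

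First, I would prove antisymmetry of $r$. Because $P$ has weight $\lambda = 0$, the compatibility condition \eqref{eq:qrbp} becomes $\mathfrak{B}(Pa, b) + \mathfrak{B}(a, Pb) = 0$. Applying Lemma~\ref{rbfna0} with $\lambda = 0$ to $P = r_+ \circ I_\mathfrak{B}^{-1}$, the symmetric part $S$ of $r$ equals $-0\cdot r_\mathfrak{B} = 0$, so $r$ is antisymmetric. Consequently $r_+ = r_-$, and the $L$-invariance condition \eqref{eq:livnc} on the symmetric part holds vacuously.

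Second, I would establish $\bm{A}(r) = 0$. This is exactly the associative portion of the equivalence in Proposition~\ref{pro:pybe2h}: the $\cdot_A$-component of that proof is a purely associative calculation, and with $I_r = 0$ (from antisymmetry) the second equation of Proposition~\ref{pro:pybe2h} reduces to the weight-$0$ Rota-Baxter identity $P(a)\cdot_A P(b) = P(P(a)\cdot_A b + a \cdot_A P(b))$, which holds by hypothesis. Third, for the admissibility conditions \eqref{eq:pqadm1}--\eqref{eq:pqadm2} with $\Psi = \hat\Phi$: Lemma~\ref{lem:aof} reduces them to the intertwining relations $\partial_i r_+ = r_+ \hat\partial_i^{\,*}$ and $\partial_i r_- = r_- \hat\partial_i^{\,*}$. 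Lemma~\ref{lem:dfof} gives the first directly, and the second follows from the first using $r_+ = r_-$.

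The main obstacle is not computational but conceptual: it lies in identifying the associative content of Proposition~\ref{pro:pybe2h} (which is stated for Poisson algebras) and matching $\hat\Phi$ with $\Psi$ through Lemmas~\ref{lem:aof} and \ref{lem:dfof}. Once these identifications are made, antisymmetry, $\bm{A}(r) = 0$, and the $\hat\Phi$-admissibility conditions are all in hand, so the cited lemma preceding Definition~\ref{quasidab} produces the desired triangular differential ASI bialgebra structure.
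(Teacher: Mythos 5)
Your proposal is correct and follows essentially the same route as the paper: antisymmetry of $r$ from Lemma~\ref{rbfna0} with $\lambda=0$, $\bm{A}(r)=0$ from the associative part of the Proposition~\ref{pro:pybe2h} computation combined with the weight-$0$ Rota-Baxter identity, and the $\hat{\Phi}$-admissibility of the AYBE via Lemmas~\ref{lem:dfof} and~\ref{lem:aof}. Your explicit remark that $r_{+}=r_{-}$ (since $I_r=0$) supplies the second intertwining relation is a small but welcome clarification that the paper leaves implicit.
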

\begin{proof}
	It follows from Lemma~\ref{rbfna0} that $r +\tau (r)=0$.
	Similarly to Proposition~\ref{pro:pybe2h}, we show that $\bm{A}(r) = 0$.
	On the other hand, setting $\Phi = \{\partial_k\}_{k=1}^m$, by Lemma~\ref{lem:dfof}, we have $\partial_{i}r_{+} = r_{+}\hat{\partial_{i}}^{*}$.
	Therefore, by Lemma~\ref{lem:aof}, we show that $r$ is a solution of the $\hat{\Phi}$-admissible AYBE in $(A, \cdot_{A}, \Phi)$.
	Noting that $\hat{\Phi}$ is admissible to $(A, \cdot_{A}, \Phi)$, we complete the proof. 
\end{proof}

\begin{theorem}\label{thm:fdb2qrp}
	Let $(A, \cdot_A, \Delta_r, \Phi = \{\partial_k\}_{k=1}^m, \Psi = \{\eth_k\}_{k=1}^m)$ be a factorizable differential ASI bialgebra with $I_r = r_{+}- r_{-}$.
	Then $((A, \cdot_A, \Phi), \mathfrak{B}, P)$ is a symmetric Rota-Baxter differential Frobenius algebra of weight $\lambda$ such that the adjoint of $\Phi$ with respect to $\mathfrak{B}$ is $\Psi$, where the linear map $P: A \to A$ and bilinear form $\mathfrak{B} \in \otimes^2 A^*$ are respectively defined by
	\begin{align*}
		P &= -\lambda r_{+} \circ I_r^{-1}, \quad \lambda \neq 0, \\
		\mathfrak{B}(a, b) &= -\lambda\langle I_r^{-1}(a), b \rangle, \;\; \forall a, b \in A.
	\end{align*}
\end{theorem}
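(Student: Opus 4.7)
The plan is to mimic the proof of Theorem~\ref{thm:fpb2qrp} on the associative half of the structure, and then handle the derivations separately via Lemma~\ref{lem:aof}. The associative Yang--Baxter/quadratic-Frobenius dictionary I need is parallel to Propositions~\ref{qparinv} and \ref{pro:pybe2h}: it says that a symmetric $L$-invariant $r$ exactly corresponds to a symmetric invariant $\mathfrak{B}$, and that the associative Yang--Baxter equation $\bm{A}(r) = 0$ exactly encodes the Rota--Baxter identity for $P = r_+ \circ I_\mathfrak{B}^{-1}$. This dictionary is already available in the associative setting (see \cite{sheng2023quasi}), and combined with Lemma~\ref{rbfna0} it yields, just as in Theorem~\ref{thm:fpb2qrpx}, that $(A, \cdot_A, \mathfrak{B})$ is a symmetric Frobenius algebra, that $P = -\lambda r_{+} \circ I_r^{-1}$ is a Rota--Baxter operator of weight $\lambda$ on $(A, \cdot_A)$, and that the compatibility Eq.~\eqref{eq:qrbp} holds. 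Nondegeneracy of $\mathfrak{B}$ is automatic from the factorizability hypothesis that $I_r$ is a linear isomorphism, and symmetry of $\mathfrak{B}$ follows from $I_r^* = I_r$.

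The main new content relative to Theorem~\ref{thm:fpb2qrp} is the interaction with the derivations $\Phi = \{\partial_i\}$ and $\Psi = \{\eth_i\}$. Since $(A, \cdot_A, \Delta_r, \Phi, \Psi)$ is a factorizable differential ASI bialgebra, $r$ solves the $\Psi$-admissible AYBE, so Lemma~\ref{lem:aof} gives $\partial_i r_\pm = r_\pm \eth_i^*$ for every $i$. Subtracting yields $\partial_i I_r = I_r \eth_i^*$, equivalently $I_r^{-1}\partial_i = \eth_i^* I_r^{-1}$. Using these identities,
\begin{equation*}
    \partial_i P \;=\; -\lambda(\partial_i r_+) I_r^{-1} \;=\; -\lambda\, r_+\, \eth_i^*\, I_r^{-1} \;=\; -\lambda\, r_+\, I_r^{-1}\partial_i \;=\; P\partial_i,
\end{equation*}
so $P$ commutes with every $\partial_i$, and for all $a,b \in A$,
\begin{equation*}
    \mathfrak{B}(\partial_i(a), b) \;=\; -\lambda\langle I_r^{-1}\partial_i(a), b\rangle \;=\; -\lambda\langle \eth_i^* I_r^{-1}(a), b\rangle \;=\; -\lambda\langle I_r^{-1}(a), \eth_i(b)\rangle \;=\; \mathfrak{B}(a, \eth_i(b)),
\end{equation*}
which identifies the adjoint $\widehat{\partial_i}$ of $\partial_i$ under $\mathfrak{B}$ with $\eth_i$, so $\widehat{\Phi} = \Psi$.

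I do not expect a serious obstacle. The only delicate points are bookkeeping: ensuring that the associative analogue of Proposition~\ref{pro:pybe2h} is applied with our sign and symmetry conventions so that $P$, rather than its partner $\widetilde{P} = -\lambda\,\id - P$, emerges as the Rota--Baxter operator, and checking that the admissibility clauses (4) and (5) of Definition~\ref{def:dasi} are not needed beyond what is already packaged in Lemma~\ref{lem:aof}. Once these conventions are pinned down, the argument is a clean assemblage of the Poisson-side template of Theorem~\ref{thm:fpb2qrp} with the two short differential computations displayed above.
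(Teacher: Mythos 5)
Your proposal is correct and follows essentially the same route as the paper: the associative part is delegated to the quasi-triangular ASI bialgebra theory of \cite{sheng2023quasi} (the paper cites Corollary~4.8 there), and the differential part is handled exactly as you do, via Lemma~\ref{lem:aof} giving $\partial_i I_r = I_r\eth_i^*$, from which both $\partial_i P = P\partial_i$ and $\mathfrak{B}(\partial_i(a),b)=\mathfrak{B}(a,\eth_i(b))$ follow by the same two short computations.
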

\begin{proof}
	It follows from \cite[Corollary~4.8]{sheng2023quasi} that $(A, \cdot_{A}, \mathfrak{B})$ is a symmetric Frobenius algebra and $(A, \cdot_A, P)$ is a Rota-Baxter algebra of weight $\lambda$ such that Eq.~\eqref{eq:qrbp} holds.
	For all $a, b\in A$, we have
	\begin{align*}
		\mathfrak{B}(\partial_{i}(a), b) = -\lambda \langle I_{r}^{-1}(\partial_{i}(a)), b\rangle = -\lambda \langle I_{r}^{-1}\partial_{i}I_{r}I_{r}^{-1}(a), b\rangle, \\
		\mathfrak{B}(a, \eth_{i}(b)) = -\lambda \langle I_{r}^{-1}(a), \eth_{i}(b)\rangle = -\lambda \langle I_{r}^{-1}I_{r}\eth_{i}^{*}I_{r}^{-1}(a), b\rangle,
	\end{align*}
	and by Lemma~\ref{lem:aof}, we have 
	\begin{equation*}
		\partial_{i}I_{r} = \partial_{i}(r_{+}-r_{-}) = (r_{+}-r_{-})\eth_{i}^{*} = I_{r}\eth_{i}^{*}.
	\end{equation*}
	Therefore, $\mathfrak{B}(\partial_{i}(a), b) = \mathfrak{B}(a, \eth_{i}(b))$.
	Furthermore, we have 
	\begin{equation*}
		\partial_{i} P = - \lambda \partial_{i} r_{+}  I_{r}^{-1} =  - \lambda r_{+} \eth_{i}^{*} I_{r}^{-1} = - \lambda r_{+} I_{r}^{-1} \partial_{i} = P \partial_{i}.
	\end{equation*}
	The proof is completed.
\end{proof}

\begin{theorem}\label{thm:fdb2qrpx}
	Let $((A, \cdot_{A}, \Phi = \{\partial_k\}_{k=1}^m), \mathfrak{B}, P)$ be a symmetric Rota-Baxter differential Frobenius algebra of weight $\lambda \neq 0$, and $I_\mathfrak{B}: A^* \to A$ be the induced linear isomorphism by $\mathfrak{B}$. 
	Let $r \in A \otimes A$ be the 2-tensor form of $P \circ I_\mathfrak{B}$ given by  Eq.~\eqref{eq:pnbf} and $\hat{\Phi}$ be the adjoint of $\Phi$ with respect to $\mathfrak B$. 
	Then $r$ is a solution of the $\hat{\Phi}$-admissible AYBE in $(A, \cdot_A, \Phi)$ and gives rise to a factorizable differential ASI bialgebra $(A, \cdot_{A}, \Delta_r, \Phi, \hat{\Phi})$. 
\end{theorem}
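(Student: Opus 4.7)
The plan is to mimic the strategy of Theorem~\ref{thm:fpb2qrpx} in the differential ASI setting, splitting the verification into the ``bialgebra part'' (which parallels the Poisson case exactly) and the ``differential part'' (which is controlled by Lemmas~\ref{lem:aof} and~\ref{lem:dfof}). Concretely, writing $r = S + \Lambda$ with $S$ symmetric and $\Lambda$ antisymmetric, I will show in order: $(1)$ $S = -\lambda r_{\mathfrak{B}}$ and hence $S$ is $L$-invariant with $I_r$ a linear isomorphism; $(2)$ $\bm{A}(r) = 0$ as a consequence of the Rota-Baxter identity of weight $\lambda$; $(3)$ $r$ satisfies the $\hat{\Phi}$-admissibility equations~\eqref{eq:pqadm1}--\eqref{eq:pqadm2}; $(4)$ everything assembles into a factorizable differential ASI bialgebra $(A, \cdot_A, \Delta_r, \Phi, \hat{\Phi})$, using that $\hat{\Phi}$ is automatically admissible to $(A, \cdot_A, \Phi)$ by \cite[Proposition~3.3]{lin2023differential}.

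For the bialgebra part, let $r_{\mathfrak{B}}$ be the 2-tensor form of $I_\mathfrak{B}$. Since $\mathfrak{B}$ is a symmetric invariant nondegenerate bilinear form on the associative algebra $(A, \cdot_A)$, a direct argument (the associative analogue of Proposition~\ref{qparinv}) shows that $r_{\mathfrak{B}}$ is symmetric and $L$-invariant in the sense of Eq.~\eqref{eq:livnc}. Lemma~\ref{rbfna0} then gives $S = -\lambda r_{\mathfrak{B}}$, so $S$ inherits $L$-invariance and $I_r = -\lambda I_{\mathfrak{B}}$ is a linear isomorphism. The associative Yang--Baxter identity $\bm{A}(r) = 0$ follows by the same calculation that underlies Proposition~\ref{pro:pybe2h} (in its associative version, as used in the proof of Proposition~\ref{rbfw0rd}): unwinding $\bm{A}(r)$ along $I_\mathfrak{B}$ converts it into the Rota-Baxter identity
\begin{equation*}
    P(a) \cdot_A P(b) = P\bigl(P(a) \cdot_A b + a \cdot_A P(b) + \lambda\, a \cdot_A b\bigr),
\end{equation*}
which holds by assumption.

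For the differential part, I will invoke Lemma~\ref{lem:dfof}, which yields $\partial_i r_{+} = r_{+} \hat{\partial_i}^{*}$ for each $i$, and hence (applying the dual $-{}^*$) also $\partial_i r_{-} = r_{-} \hat{\partial_i}^{*}$, since $r_{-} = -r_{+}^{*}$ and derivations behave well under adjoints under a symmetric $\mathfrak{B}$. Lemma~\ref{lem:aof} then translates these identities exactly into Eqs.~\eqref{eq:pqadm1}--\eqref{eq:pqadm2} for the set $\Psi = \hat{\Phi}$, showing that $r$ solves the $\hat{\Phi}$-admissible AYBE in $(A, \cdot_A, \Phi)$. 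Since $\hat{\Phi}$ is admissible to $(A, \cdot_A, \Phi)$ by \cite[Proposition~3.3]{lin2023differential}, Definition~\ref{quasidab} applies and produces the quasi-triangular differential ASI bialgebra $(A, \cdot_A, \Delta_r, \Phi, \hat{\Phi})$; nondegeneracy of $S$ (equivalently of $I_r = -\lambda I_{\mathfrak{B}}$) upgrades this to factorizability.

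The main obstacle I anticipate is the bookkeeping for the $\hat{\Phi}$-admissibility step: one must be careful that the adjoints $\hat{\partial_i}$ really match the coderivations appearing in Lemma~\ref{lem:aof}, and that the two equations of that lemma both come out correctly from the single identity $\partial_i r_{+} = r_{+} \hat{\partial_i}^{*}$ (via the relation $r_{-} = -r_{+}^{*}$). Once this identification is set up cleanly, the remaining checks are parallel to those in the Poisson case (Theorem~\ref{thm:fpb2qrpx}) and in the weight-zero differential case (Proposition~\ref{rbfw0rd}).
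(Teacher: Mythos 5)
Your overall strategy is exactly the paper's: Lemma~\ref{rbfna0} gives $S=-\lambda r_{\mathfrak{B}}$ and $I_r=-\lambda I_{\mathfrak{B}}$, the Rota-Baxter identity gives $\bm{A}(r)=0$ and $L$-invariance of $S$ as in Theorem~\ref{thm:fpb2qrpx}, and Lemmas~\ref{lem:dfof} and~\ref{lem:aof} handle the $\hat{\Phi}$-admissibility. The one step that does not work as written is your derivation of $\partial_i r_- = r_-\hat{\partial_i}^*$ by ``applying the dual'' to $\partial_i r_+ = r_+\hat{\partial_i}^*$ via $r_-=-r_+^*$: dualizing that identity yields
\begin{equation*}
r_+^*\partial_i^* = \hat{\partial_i}\, r_+^*, \quad\text{i.e.}\quad \hat{\partial_i}\, r_- = r_-\,\partial_i^*,
\end{equation*}
which intertwines the wrong pair of operators (it coincides with the identity required by Lemma~\ref{lem:aof} only when $\hat{\partial_i}=\partial_i$). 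You correctly flagged this bookkeeping as the delicate point, but the proposed mechanism for resolving it is the part that fails.

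The fix is short and is what the paper does: use the \emph{other} identity of Lemma~\ref{lem:dfof}, namely $\partial_i I_{\mathfrak{B}} = I_{\mathfrak{B}}\hat{\partial_i}^*$, which together with $I_r=-\lambda I_{\mathfrak{B}}$ gives $\partial_i I_r = I_r\hat{\partial_i}^*$; then
\begin{equation*}
\partial_i r_- = \partial_i(r_+ - I_r) = r_+\hat{\partial_i}^* - I_r\hat{\partial_i}^* = r_-\hat{\partial_i}^*.
\end{equation*}
With that substitution your argument matches the paper's proof; the remaining ingredients ($S=-\lambda r_{\mathfrak{B}}$ symmetric, $L$-invariant and nondegenerate, $\bm{A}(r)=0$ from the weight-$\lambda$ Rota-Baxter identity, and admissibility of $\hat{\Phi}$ from \cite[Proposition~3.3]{lin2023differential}) are all correctly identified.
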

\begin{proof}
	By Lemma~\ref{rbfna0},  $r + \tau(r)$   is equal to $-\lambda r_\mathfrak{B}$ and $I_{r} = -\lambda I_{\mathfrak{B}}$.
	Then by Lemma~\ref{lem:dfof}, we have 
	\begin{equation*}
		\partial_{i}r_{+} = r_{+}\hat{\partial_{i}}^{*} \text{\;\; and \;\;}
		\partial_{i}r_{-} = \partial_{i}(r_{+} - I_{r}) = r_{+}\hat{\partial_{i}}^{*} - I_{r} \hat{\partial_{i}}^{*} = r_{-}\hat{\partial_{i}}^{*}.
	\end{equation*}
	Moreover, similarly to Theorem~\ref{thm:fpb2qrpx}, we show that $\bm{A}(r) = 0$ and the symmetric part of $r$ is $L$-invariant.
	Hence, by Lemma~\ref{lem:aof}, $r$ is a solution of the $\hat{\Phi}$-admissible AYBE in $(A, \cdot_{A}, \Phi)$, which completes the proof.
\end{proof}

 We now turn to the constructions of quasi-triangular and factorizable Poisson bialgebras from quasi-triangular and factorizable (commutative and cocommutative) differential ASI bialgebras.
The following result is known (cf. \cite{bhaskara1988poisson}).
\begin{proposition}\label{prop:ipfd}
	Let $(A, \cdot_{A}, \Phi = \{\partial_{1}, \partial_{2}\})$ be a commutative differential algebra.
	Then $(A$, $[\ ,\ ]_{A}$, $\cdot_{A})$ is a Poisson algebra, called the \textbf{induced Poisson algebra of $(A, \cdot_{A}, \Phi)$}, where $[\ , \ ]: A \otimes A \to A$ is defined by 
	\begin{equation}
		[a, b]_{A} := \partial_{1}(a)\cdot_{A} \partial_{2}(b) - \partial_{2}(a)\cdot_{A} \partial_{1}(b), \;\; \forall a, b \in A.
	\end{equation}
\end{proposition}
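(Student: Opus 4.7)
The plan is to verify in order the three defining axioms of a Poisson algebra for the bracket $[a,b]_A = \partial_1(a)\cdot_A \partial_2(b) - \partial_2(a)\cdot_A \partial_1(b)$. Antisymmetry is immediate: since $(A,\cdot_A)$ is commutative we have $\partial_i(a)\cdot_A \partial_j(b) = \partial_j(b)\cdot_A \partial_i(a)$, so interchanging $a$ and $b$ negates each summand. For the Poisson compatibility $[a, b\cdot_A c]_A = [a,b]_A \cdot_A c + b \cdot_A [a,c]_A$, I would apply the Leibniz rule to $\partial_1(b\cdot_A c)$ and $\partial_2(b\cdot_A c)$ inside the definition of the left-hand side, expand, and regroup the four resulting summands using commutativity of $\cdot_A$; this is a direct manipulation that uses nothing beyond the Leibniz rule for $\partial_1$ and $\partial_2$.

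The Jacobi identity $[[a,b]_A, c]_A + [[b,c]_A, a]_A + [[c,a]_A, b]_A = 0$ is the main obstacle. My preferred route is to introduce, for each $a \in A$, the ``Hamiltonian'' linear map
\begin{equation*}
X_a := L(\partial_1(a)) \circ \partial_2 - L(\partial_2(a)) \circ \partial_1 : A \to A,
\end{equation*}
so that $[a,b]_A = X_a(b)$. A short check based on the Leibniz rule for $\partial_1,\partial_2$ and the commutativity of $\cdot_A$ shows that each $X_a$ is a derivation of $(A,\cdot_A)$; this check is in fact equivalent to the Poisson compatibility verified in the previous step. The Jacobi identity then reduces to the operator identity $X_{[a,b]_A} = X_a X_b - X_b X_a$ on $A$. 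Expanding both sides with the Leibniz rule once more and invoking the crucial commutation $\partial_1 \partial_2 = \partial_2 \partial_1$, the ``second-order'' contributions involving $\partial_i\partial_j(a)$ and $\partial_i\partial_j(b)$ appear symmetrically in the two sides and cancel, leaving precisely the desired identity.

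The only subtle step is the bookkeeping in the final expansion: each cyclic term in the Jacobi sum produces eight monomials of the form $\partial_i\partial_j(x)\cdot_A \partial_k(y)\cdot_A \partial_\ell(z)$ with $i,j,k,\ell \in \{1,2\}$, so there are twenty-four monomials to match, and one must verify that each appears twice with opposite signs after invoking commutativity of $\cdot_A$ together with $\partial_1\partial_2 = \partial_2\partial_1$. No new conceptual input is needed beyond these facts, but careful organization of the computation, either directly or via the operator-identity reformulation above, is essential for a clean verification.
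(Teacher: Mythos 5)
Your proposal is correct, and a comparison with ``the paper's proof'' is moot here: the paper states Proposition~\ref{prop:ipfd} with no proof at all, treating it as the classical fact (already flagged in the introduction) that a commutative algebra with two commuting derivations carries an induced Poisson bracket. Your argument fills that gap completely. Antisymmetry and the Leibniz compatibility $[a,b\cdot_A c]_A=[a,b]_A\cdot_A c+b\cdot_A[a,c]_A$ are exactly the routine expansions you describe (the latter also uses associativity to regroup, which is of course available). The reduction of the Jacobi identity to the operator identity $X_{[a,b]_A}=X_aX_b-X_bX_a$ is valid, since by antisymmetry the cyclic form of Jacobi is equivalent to $[a,[b,c]_A]_A=[[a,b]_A,c]_A+[b,[a,c]_A]_A$, and the identity does check out: writing $\partial_iL(u)=L(\partial_i u)+L(u)\partial_i$ and expanding, the terms of $X_aX_b-X_bX_a$ in which a second-order operator $\partial_i\partial_j$ acts on the argument cancel in pairs using $\partial_1\partial_2=\partial_2\partial_1$ together with commutativity of left multiplications, while the surviving terms, which carry $\partial_i\partial_j(a)$ or $\partial_i\partial_j(b)$ inside an $L(\cdot)$, are precisely the eight terms of $X_{[a,b]_A}$. (Your phrasing that the second-order contributions in $a$ and $b$ ``cancel'' is slightly loose --- they do not vanish but rather match the corresponding terms of $X_{[a,b]_A}$ on the other side --- but this is a cosmetic point, not a gap.) The operator reformulation is a genuinely cleaner organization than the raw $24$-monomial bookkeeping and is the version I would recommend writing up.
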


\begin{lemma}\label{lem:aybe2pybe}
	{\rm (\cite{lin2023differential})}
	Let $(A, \cdot_{A}, \Phi = \{\partial_1, \partial_2\})$ be a $\Psi = \{\eth_1, \eth_2\}$-admissible commutative differential algebra and $(A, [\ ,\ ]_{A}, \cdot_{A})$ be the induced Poisson algebra of $(A, \cdot_{A}, \Phi)$.
	Suppose that the following equation holds
	\begin{equation}
		\eth_2(\partial_1(a)) \cdot b = \eth_1(\partial_2(a)) \cdot b , \;\;
		\forall a, b \in A. \label{eq:vip}
	\end{equation}
	Then every solution of the $\Psi$-admissible AYBE in $(A, \cdot_{A}, \Phi)$ is a solution of the Poisson Yang-Baxter equation in the Poisson algebra $(A, [\ ,\ ]_{A}, \cdot_{A})$.
\end{lemma}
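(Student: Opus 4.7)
The plan is to derive $\bm C(r)=0$ from the automatic $\bm A(r)=0$ (which is part of being a solution of the $\Psi$-admissible AYBE) by using the admissibility identities to recast Lie-bracket terms as differential operators applied to associative products, and then deploying Eq.~\eqref{eq:vip} to kill the obstruction. Writing $r=\sum_i a_i\otimes b_i$, the $\Psi$-admissible AYBE hypothesis provides
\begin{equation*}
\sum_i\partial_k(a_i)\otimes b_i=\sum_i a_i\otimes\eth_k(b_i),\qquad \sum_i\eth_k(a_i)\otimes b_i=\sum_i a_i\otimes\partial_k(b_i),\quad k=1,2,
\end{equation*}
together with $\bm A(r)=0$. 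Since $\cdot_{A}$ is commutative, $r_{23}\cdot r_{12}=r_{12}\cdot r_{23}$ and $\bm A(r)=0$ reduces to the identity $r_{12}\cdot r_{23}=r_{12}\cdot r_{13}+r_{13}\cdot r_{23}$.

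The first step is to verify, by systematically transferring $\partial_k$'s across the tensor factors of $r$ using the admissibility identities above and invoking commutativity of $\cdot_{A}$, the three ``bracket-as-operator'' identities
\begin{align*}
[r_{12},r_{13}]_{A} &=(\id\otimes\eth_1\otimes\eth_2-\id\otimes\eth_2\otimes\eth_1)(r_{12}\cdot r_{13}),\\
[r_{13},r_{23}]_{A} &=(\eth_1\otimes\eth_2\otimes\id-\eth_2\otimes\eth_1\otimes\id)(r_{13}\cdot r_{23}),\\
[r_{12},r_{23}]_{A} &=(\eth_1\otimes\id\otimes\eth_2-\eth_2\otimes\id\otimes\eth_1)(r_{12}\cdot r_{23}).
\end{align*}
Each follows by expanding the induced Lie bracket $[a,b]_{A}=\partial_1(a)\partial_2(b)-\partial_2(a)\partial_1(b)$ and moving the $\partial_k$'s out of the bracketed slot into $\eth_k$'s on the complementary slots.

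Now I apply the mixed operator $E:=\eth_1\otimes\id\otimes\eth_2-\eth_2\otimes\id\otimes\eth_1$ to both sides of the relation $r_{12}\cdot r_{23}=r_{12}\cdot r_{13}+r_{13}\cdot r_{23}$. The left side becomes $[r_{12},r_{23}]_{A}$ by the third identity above, yielding
\begin{equation*}
[r_{12},r_{23}]_{A}=E(r_{12}\cdot r_{13})+E(r_{13}\cdot r_{23}).
\end{equation*}
I would then expand $E(r_{12}\cdot r_{13})$ and $E(r_{13}\cdot r_{23})$ component-wise using the Leibniz rule $\eth_k(a\cdot b)=\eth_k(a)\cdot b-a\cdot\partial_k(b)$ (which follows from $\Psi$-admissibility), the admissibility identities, commutativity of $\cdot_{A}$, and commutativity of $\partial_1,\partial_2$ (so that the $\partial_1\partial_2$- and $\partial_2\partial_1$-terms produced cancel). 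Each expansion then yields a ``principal'' contribution of size $-[r_{12},r_{13}]_{A}$ or $-[r_{13},r_{23}]_{A}$, together with ``error'' terms in which the operator $\eth_2\partial_1-\eth_1\partial_2$ acts on an entry of $r$.

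The crucial step, which I expect to be the main obstacle, is showing that these error terms vanish. The key auxiliary identity is
\begin{equation*}
\sum_i(\eth_1\partial_2-\eth_2\partial_1)(a_i)\otimes b_i=\sum_i a_i\otimes(\eth_2\partial_1-\eth_1\partial_2)(b_i),
\end{equation*}
obtained by iterating the admissibility identities (applying $\id\otimes\eth_2$ and $\eth_1\otimes\id$ respectively to obtain two expressions for $\sum_i\eth_1(a_i)\otimes\eth_2(b_i)$, and symmetrically swapping $1\leftrightarrow 2$). Combining this identity with Eq.~\eqref{eq:vip}, which asserts $(\eth_2\partial_1-\eth_1\partial_2)(a)\cdot b=0$ for all $a,b\in A$, gives $\sum_i c\cdot a_i\otimes(\eth_2\partial_1-\eth_1\partial_2)(b_i)=0$ for every $c\in A$, and analogously in the other tensor slot. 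This eliminates all the error terms, leaving $[r_{12},r_{23}]_{A}=-[r_{12},r_{13}]_{A}-[r_{13},r_{23}]_{A}$, which is exactly $\bm C(r)=0$. Together with $\bm A(r)=0$, this establishes that $r$ is a solution of the PYBE in $(A,[\ ,\ ]_{A},\cdot_{A})$.
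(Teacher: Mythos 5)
The paper itself offers no proof of this lemma --- it is imported verbatim from \cite{lin2023differential} --- so there is no internal argument to measure your proposal against; I have therefore checked your argument on its own terms, and it is correct. The three ``bracket-as-operator'' identities follow from $[a,b]_A=\partial_1(a)\cdot_A\partial_2(b)-\partial_2(a)\cdot_A\partial_1(b)$ together with the tensor identities $(\partial_k\otimes\id)(r)=(\id\otimes\eth_k)(r)$ and $(\eth_k\otimes\id)(r)=(\id\otimes\partial_k)(r)$, and by commutativity $\bm{A}(r)=0$ indeed reduces to $r_{12}\cdot r_{13}+r_{13}\cdot r_{23}=r_{12}\cdot r_{23}$. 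Applying $E=\eth_1\otimes\id\otimes\eth_2-\eth_2\otimes\id\otimes\eth_1$ and carrying out the expansion as you describe, one finds
\begin{align*}
E(r_{12}\cdot r_{13})&=\sum_{i,j}a_i\cdot_A(\eth_1\partial_2-\eth_2\partial_1)(a_j)\otimes b_i\otimes b_j-[r_{12},r_{13}]_A,\\
E(r_{13}\cdot r_{23})&=\sum_{i,j}a_i\otimes a_j\otimes(\eth_2\partial_1-\eth_1\partial_2)(b_i)\cdot_A b_j-[r_{13},r_{23}]_A,
\end{align*}
and both error terms vanish from Eq.~\eqref{eq:vip} and commutativity of $\cdot_A$ alone, so $[r_{12},r_{23}]_A=-[r_{12},r_{13}]_A-[r_{13},r_{23}]_A$, i.e.\ $\bm{C}(r)=0$, which together with the hypothesis $\bm{A}(r)=0$ gives the PYBE. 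The only comment worth making is that your auxiliary slot-transfer identity, while true and correctly derived, is not actually needed: if you first trade the outer $\eth_k$ acting on the free slot for a $\partial_k$ on the corresponding entry inside the product slot, and only then expand via Eqs.~\eqref{eq:qadm1}--\eqref{eq:qadm2}, the operator $\eth_2\partial_1-\eth_1\partial_2$ always lands on an element that is still multiplied by something, which is exactly the situation Eq.~\eqref{eq:vip} covers.
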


\begin{theorem}\label{thm:dasi2pb}
	If $(A, \cdot_{A}, \Delta_{r}, \Phi, \Psi)$ is a quasi-triangular (resp. triangular, factorizable) commutative and cocommutative differential ASI bialgebra, and Eq.~\eqref{eq:vip} holds,
	then $(A, [\ ,\ ]_{A}, \cdot_{A}, \delta_r, \Delta_{r})$ is a quasi-triangular (resp. triangular, factorizable) Poisson bialgebra through $r$, where $(A, [\ ,\ ]_{A}, \cdot_{A})$ is the induced Poisson algebra of $(A, \cdot_{A}, \Phi)$ and 
	$\delta_r: A \to A \otimes A$ is defined by
	\begin{equation}
		\delta_r = (\eth_1 \otimes \eth_2 - \eth_2 \otimes \eth_1)
		\Delta_r. \label{eq:coP}
	\end{equation}
\end{theorem}
\begin{proof}
	For all $a \in A$, we have
	\begin{align*}
		&(\ad(a) \otimes \id + \id \otimes \ad(a))(r+\tau(r)) \\
		&\overset{\hphantom{\eqref{eq:qadm1}}}{=} \left((L(\partial_{1}(a))\partial_{2} - L(\partial_{2}(a))\partial_{1} ) \otimes \id + \id \otimes (L(\partial_{1}(a))\partial_{2} - L(\partial_{2}(a))\partial_{1})\right)(r+\tau(r)) \\
		&\overset{\eqref{eq:qadm1}}{=} \left( L(\partial_{1}(a)) \otimes \eth_{2} - L(\partial_{2}(a)) \otimes \eth_{1}  - \id \otimes \eth_{2} L(\partial_{1}(a))  + \id \otimes \eth_{1} L(\partial_{2}(a)) \right)(r+\tau(r)) \\
		&\hspace{0.6cm} + (\id \otimes L(\eth_{2}\partial_{1}(a)) - \id \otimes L(\eth_{1}\partial_{2}(a)))(r+\tau(r)) \\
		&\overset{\eqref{eq:vip}}{=} \left( (\id \otimes \eth_{2})(L(\partial_{1}(a)) \otimes \id - \id \otimes L(\partial_{1}(a))) + (\id \otimes \eth_{1})(L(\partial_{2}(a)) \otimes \id - \id \otimes L(\partial_{2}(a))) \right)(r + \tau(r))\\
		&\hspace{0.1cm}= 0.
	\end{align*}
	Hence, the symmetric part of $r$ is $(\ad, L)$-invariant.
	By Lemma~\ref{lem:aybe2pybe}, $r$ is a solution of the Poisson Yang-Baxter equation in $(A, [\ ,\ ]_{A}, \cdot_{A})$.
	Furthermore, 
	\begin{align*}
		\delta_r(a)&\overset{\hphantom{\eqref{eq:pqadm1}}}{=} (\eth_1 \otimes \eth_2 - \eth_2 \otimes \eth_1) \Delta_r(a) =(\eth_1 \otimes \eth_2 - \eth_2 \otimes \eth_1) (\id \otimes L(a) - L(a) \otimes \id )(r) \\
		&\overset{\eqref{eq:pqadm1}}{=} (\id \otimes \eth_2 L(a) \partial_1 - \id \otimes \eth_1 L(a) \partial_2 )(r) - (\eth_1 L(a) \partial_2 \otimes \id - \eth_2 L(a) \partial_1 \otimes \id)(r) \\
		&\overset{\eqref{eq:qadm1}}{=} ( \ad(a) \otimes \id + \id \otimes \ad (a))(r).
	\end{align*}
	It is now obvious that the theorem holds.
\end{proof}

Let $(A, \cdot_{A}, \Delta_{r}, \Phi=\{\partial_{1}, \partial_{2}\}, \Psi=\{\eth_{1}, \eth_{2}\})$ be a communicative and cocommutative factorizable differential ASI bialgebra.
By Proposition~\ref{prop:fdt}, let $(\mathrm{Im}(r_{+} \oplus r_{-}), \cdot, \Phi + \Phi = \{\partial_{1} + \partial_{1}, \partial_{2} + \partial_{2}\})$ be the differential subalgebra of the direct sum differential algebra $A \oplus A$, which is isomorphic to the differential algebra $(A^*, \cdot_r, \Psi^{*})$, where $\cdot_r: A^* \otimes A^* \to A^*$ is defined by \eqref{eq:pcbdr2}.
On the other hand, suppose that Eq.~\eqref{eq:vip} holds. 
Let $(A, [\ ,\ ]_{A}, \cdot_{A}, \delta_{r}, \Delta_{r})$ be the induced factorizable Poisson bialgebra in Theorem~\ref{thm:dasi2pb}.
Then by Proposition~\ref{prop:fpt}, let $(\mathrm{Im}(r_{+} \oplus r_{-}), [\ ,\ ], \cdot)$ be the Poisson subalgebra of the direct sum Poisson algebra $A \oplus A$, 
which is isomorphic to the Poisson algebra $(A^*, [\ ,\ ]_r, \cdot_r)$, where $[\ ,\ ]_r, \cdot_r: A^* \otimes A^* \to A^*$ are respectively defined by Eqs.~\eqref{eq:pcbdr1} and \eqref{eq:pcbdr2}.
\begin{corollary}
	With the conditions as above. 
	The induced Poisson algebras of $(\mathrm{Im}(r_{+} \oplus r_{-}), \cdot, \Phi + \Phi)$ and $(A^*, \cdot_r, \Psi^{*})$ are exactly $(\mathrm{Im}(r_{+} \oplus r_{-}), [\ ,\ ], \cdot)$ and $(A^*, [\ ,\ ]_r, \cdot_r)$ respectively.  
	In conclusion, we have the following commutative diagram
	\begin{displaymath}
		\xymatrix@C=2cm{ 
			(A, \cdot_A, \Delta_r, \Phi, \Psi) \ar@{->}[d]^{\mathrm{Thm.}~\ref{thm:dasi2pb}} \ar@{->}[r]^{\mathrm{Prop.}~\ref{prop:fdt}} & (\mathrm{Im}(r_{+} \oplus r_{-}), \cdot, \Phi + \Phi) \ar@{->}[r]^-{\simeq} \ar@{->}[d]^{\mathrm{Prop.}~\ref{prop:ipfd}} & (A^*, \cdot_r, \Psi^{*}) \ar@{->}[d]^{\mathrm{Prop.}~\ref{prop:ipfd}} \\
			(A, [\ ,\ ]_A, \cdot_A,  \delta_{r},\Delta_{r}) \ar@{->}[r]^{\mathrm{Prop.}~\ref{prop:fpt}} & (\mathrm{Im}(r_{+} \oplus r_{-}), [\ ,\ ], \cdot) \ar@{->}[r]^-{\simeq} & (A^*, [\ ,\ ]_r, \cdot_r) 
		}
	\end{displaymath}
\end{corollary}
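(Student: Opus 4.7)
The plan is to split the corollary into three assertions and treat each in turn. First, I will verify that the Poisson algebra induced by Proposition~\ref{prop:ipfd} on the differential subalgebra $(\mathrm{Im}(r_{+} \oplus r_{-}), \cdot, \Phi + \Phi)$ coincides with $(\mathrm{Im}(r_{+} \oplus r_{-}), [\ ,\ ], \cdot)$ viewed as a subalgebra of the direct-sum Poisson algebra $A \oplus A$. Second, I will verify the analogous identification on the $A^{*}$-side. Finally, I will deduce commutativity of the square by invoking the naturality of the induced-Poisson-algebra construction with respect to differential algebra homomorphisms.

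For the first identification, a direct unfolding of Proposition~\ref{prop:ipfd} suffices. The derivations $\partial_{1} + \partial_{1}$ and $\partial_{2} + \partial_{2}$ act componentwise on $A \oplus A$, and so does the multiplication, so for $(a, b), (c, d) \in \mathrm{Im}(r_{+} \oplus r_{-})$ I would compute
\begin{align*}
	[(a, b), (c, d)]_{\Phi + \Phi} &= (\partial_{1}(a) \cdot_{A} \partial_{2}(c) - \partial_{2}(a) \cdot_{A} \partial_{1}(c),\, \partial_{1}(b) \cdot_{A} \partial_{2}(d) - \partial_{2}(b) \cdot_{A} \partial_{1}(d)) \\
	&= ([a, c]_{A},\, [b, d]_{A}),
\end{align*}
which is exactly the restriction of the direct-sum Poisson bracket; together with the componentwise multiplication this matches $(\mathrm{Im}(r_{+} \oplus r_{-}), [\ ,\ ], \cdot)$.

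For the second identification, denote by $[\ ,\ ]_{\Psi^{*}}$ the bracket induced by Proposition~\ref{prop:ipfd} on $(A^{*}, \cdot_{r}, \Psi^{*})$; the task is to show $[\ ,\ ]_{\Psi^{*}} = [\ ,\ ]_{r}$. By Proposition~\ref{prop:fdt}, the maps $r_{+}, r_{-}: (A^{*}, \cdot_{r}, \Psi^{*}) \to (A, \cdot_{A}, \Phi)$ are homomorphisms of commutative differential algebras with two commuting derivations, so by naturality of Proposition~\ref{prop:ipfd} they intertwine the induced brackets, sending $[\ ,\ ]_{\Psi^{*}}$ to $[\ ,\ ]_{A}$. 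On the other hand, by Theorem~\ref{thm:pybe2h} the same maps $r_{+}, r_{-}: (A^{*}, [\ ,\ ]_{r}, \cdot_{r}) \to (A, [\ ,\ ]_{A}, \cdot_{A})$ are Poisson algebra homomorphisms. Combining,
\begin{equation*}
	r_{\pm}([x^{*}, y^{*}]_{\Psi^{*}}) = [r_{\pm}(x^{*}), r_{\pm}(y^{*})]_{A} = r_{\pm}([x^{*}, y^{*}]_{r}), \quad \forall\, x^{*}, y^{*} \in A^{*}.
\end{equation*}
Subtracting and using that $I_{r} = r_{+} - r_{-}: A^{*} \to A$ is a linear isomorphism forces $[x^{*}, y^{*}]_{\Psi^{*}} = [x^{*}, y^{*}]_{r}$, while $\cdot_{r}$ on either side is literally the same operation.

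Commutativity of the full diagram is then immediate: the right-hand horizontal isomorphism in each row is implemented by the restriction/inverse of $r_{+} \oplus r_{-}$, which is simultaneously a differential algebra isomorphism (top row) and, in view of the two identifications above, a Poisson algebra isomorphism (bottom row). I expect the main technical point to be the naturality invoked in the second step, namely verifying explicitly that if $\varphi: (A_{1}, \cdot_{1}, \{\partial_{1,1}, \partial_{1,2}\}) \to (A_{2}, \cdot_{2}, \{\partial_{2,1}, \partial_{2,2}\})$ is a homomorphism of commutative differential algebras, then $\varphi$ intertwines the induced Poisson brackets of Proposition~\ref{prop:ipfd}. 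This is a one-line computation from the Leibniz rule, but it must be laid out explicitly so that the identifications $r_{\pm}([\ ,\ ]_{\Psi^{*}}) = [r_{\pm}(\cdot), r_{\pm}(\cdot)]_{A}$ used above are justified.
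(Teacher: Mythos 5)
Your argument is correct, but it reaches the key identification on the $A^*$ side by a genuinely different route than the paper. The paper's proof is essentially a one-liner: it recalls from the proof of Theorem~\ref{thm:dasi2pb} the identity $(\eth_1 \otimes \eth_2 - \eth_2 \otimes \eth_1)\Delta_{r} = \delta_{r}$ and then invokes the lemma stating that the induced Poisson algebra of $(A^*, \Delta^*, \Psi^*)$ is $(A^*, \delta^*, \Delta^*)$ with $\delta$ given by Eq.~\eqref{eq:coP}; dualizing immediately gives $[\ ,\ ]_{\Psi^*} = \delta_r^* = [\ ,\ ]_r$. You instead avoid the tensor computation entirely: you use that $r_{\pm}$ are simultaneously differential algebra homomorphisms (Proposition~\ref{prop:fdt}, via Lemma~\ref{lem:aof}) and Poisson algebra homomorphisms (Theorem~\ref{thm:pybe2h}), apply naturality of the induced-bracket construction of Proposition~\ref{prop:ipfd}, and then cancel using injectivity of $I_r = r_+ - r_-$. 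Both are valid; your route is more structural and makes the role of the factorization maps explicit, but it genuinely needs nondegeneracy of the symmetric part (i.e.\ factorizability), whereas the paper's dualization argument identifies the two brackets without that hypothesis. Your naturality step is indeed the one point that must be written out, and it is, as you say, immediate from the Leibniz-type compatibility $\varphi \circ \partial_{1,i} = \partial_{2,i} \circ \varphi$ together with multiplicativity of $\varphi$. The componentwise computation on $\mathrm{Im}(r_+ \oplus r_-)$ and the concluding commutativity of the diagram agree with what the paper treats as straightforward.
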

\begin{proof}
	By Theorem~\ref{thm:dasi2pb},
	the induced Poisson algebra of $(A^*, \cdot_r, \Psi^{*})$ is exactly $(A^*, [\ ,\ ]_r, \cdot_r)$.
	Note that $(A, [\ ,\ ]_{A}, \cdot_{A})$ is the induced Poisson algebra of $(A, \cdot_{A}, \Phi)$, it is straightforward that the induced Poisson algebra of $(\mathrm{Im}(r_{+} \oplus r_{-}), \cdot, \Phi + \Phi)$ is exactly $(\mathrm{Im}(r_{+} \oplus r_{-}), [\ ,\ ], \cdot)$.
	The proof is completed.
\end{proof}

Let $(A, \cdot_{A}, \Delta, \Phi=\{\partial_{1}, \partial_{2}\}, \Psi=\{\eth_{1}, \eth_{2}\})$ be a communicative and cocommutative differential ASI bialgebra.
Let $(A, [\ ,\ ]_{A}, \cdot_{A})$ be the induced Poisson algebra of $(A, \cdot_{A}, \Phi)$. Suppose that Eq.~\eqref{eq:vip} and the following equation hold:
\begin{equation}
	(\eth_2\partial_1 \otimes \id)\Delta = (\eth_1\partial_2 \otimes \id)\Delta. \label{eq:vip1}
\end{equation}
Then $(A, [\ ,\ ]_{A}, \cdot_{A}, \delta, \Delta)$ is a Poisson bialgebra where $\delta: A \to A \otimes A$ is defined by $\delta = (\eth_1 \otimes \eth_2 - \eth_2 \otimes \eth_1)
\Delta$ (\cite[Theorem~5.24]{lin2023differential}).
Let $(\mathfrak{A}, \cdot_{\mathfrak{A}}, \Delta_{r}, \Phi + \Psi^*, \Psi + \Phi^*)$ be the factorizable differential ASI bialgebra obtained in Theorem~\ref{thm:adfd} arising from $(A$, $\cdot_{A}$, $\Delta$, $\Phi$, $\Psi)$ and $(\mathfrak{A}$, $[\ ,\ ]_\mathfrak{A}$, $\cdot_\mathfrak{A}$, $\delta_r$, $\Delta_r)$ be the factorizable Poisson bialgebra obtained in Theorem~\ref{thm:spfb} arising from $(A, [\ ,\ ]_{A}, \cdot_{A}, \delta, \Delta)$.
Noting that the Poisson algebra structure $(\mathfrak{A}, [\ ,\ ]_\mathfrak{A}, \cdot_\mathfrak{A})$ is exactly the induced Poisson algebra of $(\mathfrak{A}, \cdot_{\mathfrak{A}}, \Phi + \Psi^*)$ (\cite[Remark~5.25]{lin2023differential}) and Eq.~\eqref{eq:vip} also holds for the $(\Psi+\Phi^*)$-admissible commutative differential algebra  $(\mathfrak{A}, \cdot_{\mathfrak{A}}, \Phi+\Psi^{*})$, we have the following conclusion.
\begin{corollary}
	With the conditions as above.
	$(\mathfrak{A}$, $[\ ,\ ]_\mathfrak{A}$, $\cdot_\mathfrak{A}$, $\delta_r$, $\Delta_r)$ is the induced factorizable Poisson bialgebra of the factorizable differential ASI bialgebra $(\mathfrak{A}, \cdot_{\mathfrak{A}}, \Delta_{r}, \Phi + \Psi^*, \Psi + \Phi^*)$.
	In conclusion, we have the following commutative diagram
	\begin{displaymath}
		\xymatrix@C=3cm{ 
			(A, \cdot_A, \Delta, \Phi, \Psi)  \ar@{->}[r]^-{\mathrm{Thm.}~\ref{thm:adfd}} \ar@{->}[d]& (\mathfrak{A}, \cdot_{\mathfrak{A}}, \Delta_{r}, \Phi + \Psi^*, \Psi + \Phi^*) \ar@{->}[d]^-{\mathrm{Thm.}~\ref{thm:dasi2pb}} \\
			(A, [\ ,\ ]_A, \cdot_A,  \delta,\Delta) \ar@{->}[r]^-{\mathrm{Thm.}~\ref{thm:spfb}} & (\mathfrak{A}, [\ ,\ ]_\mathfrak{A}, \cdot_\mathfrak{A}, \delta_r, \Delta_r) 
		}
	\end{displaymath}
\end{corollary}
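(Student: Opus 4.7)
The plan is to verify that Theorem~\ref{thm:dasi2pb} applies to the factorizable differential ASI bialgebra $(\mathfrak{A}, \cdot_\mathfrak{A}, \Delta_r, \Phi+\Psi^*, \Psi+\Phi^*)$ obtained from Theorem~\ref{thm:adfd}, and then to match the resulting factorizable Poisson bialgebra with the one given by Theorem~\ref{thm:spfb}. Since both constructions use the same tensor $r = \sum_i e_i \otimes e_i^* \in \mathfrak{A} \otimes \mathfrak{A}$, the identification should reduce to comparing the coboundary formulas on both sides.

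First, I would check the hypotheses of Theorem~\ref{thm:dasi2pb}. The commutativity of $(\mathfrak{A}, \cdot_\mathfrak{A})$ follows from the commutativity of $(A, \cdot_A)$ together with the cocommutativity of $(A, \Delta)$ (since the latter ensures $(A^*, \cdot_{A^*})$ is commutative, and the matched-pair formula for $\cdot_\mathfrak{A}$ is then symmetric in its two arguments). The cocommutativity of $(\mathfrak{A}, \Delta_r)$ follows dually from the cocommutativity of $\Delta$ and the commutativity of $\cdot_A$. The preamble to the corollary already records that Eq.~\eqref{eq:vip} holds for the commuting derivations $\Phi + \Psi^*$ on $\mathfrak{A}$, so all hypotheses of Theorem~\ref{thm:dasi2pb} are in place. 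By that theorem, applying it to the factorizable differential ASI bialgebra on $\mathfrak{A}$ yields a factorizable Poisson bialgebra $(\mathfrak{A}, [\ ,\ ], \cdot, \delta, \Delta_r)$ through $r$.

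Next, I would identify this induced factorizable Poisson bialgebra with $(\mathfrak{A}, [\ ,\ ]_\mathfrak{A}, \cdot_\mathfrak{A}, \delta_r, \Delta_r)$ of Theorem~\ref{thm:spfb}. On the algebra side, the identification $[\ ,\ ] = [\ ,\ ]_\mathfrak{A}$ and $\cdot = \cdot_\mathfrak{A}$ is exactly \cite[Remark~5.25]{lin2023differential}, which is cited in the preamble. On the coalgebra side, $\Delta_r$ is the same in both constructions since it is the coboundary of the same $r$. For $\delta$, the key computation in the proof of Theorem~\ref{thm:dasi2pb} shows that
\[
(\eth_1^\mathfrak{A} \otimes \eth_2^\mathfrak{A} - \eth_2^\mathfrak{A} \otimes \eth_1^\mathfrak{A})\Delta_r(u) = (\mathrm{ad}_\mathfrak{A}(u) \otimes \id + \id \otimes \mathrm{ad}_\mathfrak{A}(u))(r)
\]
for all $u \in \mathfrak{A}$, where $\eth_i^\mathfrak{A} = \eth_i + \partial_i^*$ are the coderivations on $\mathfrak{A}$. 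The left-hand side is precisely the induced Poisson coalgebra on $\mathfrak{A}$ by Eq.~\eqref{eq:coP}, while the right-hand side is $\delta_r$ from the coboundary formula~\eqref{eq:pcbd1}. Hence the two Poisson coalgebra structures coincide.

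The main obstacle will be confirming the cocommutativity of $(\mathfrak{A}, \Delta_r)$ and the admissibility conditions on $\mathfrak{A}$ at the level of bookkeeping, since we must check that the commuting pairs $\Phi + \Psi^*$ and $\Psi + \Phi^*$ interact correctly with the matched-pair comultiplication. Once these verifications are in place, factorizability of the induced Poisson bialgebra is automatic, because the symmetric part of $r$ remains $(\mathrm{ad}_\mathfrak{A}, L_{\cdot_\mathfrak{A}})$-invariant and $I_r$ remains a linear isomorphism—both already established in Theorem~\ref{thm:spfb}. The commutativity of the diagram then follows from the identification above.
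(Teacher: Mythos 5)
Your proposal is correct and follows essentially the same route the paper takes: the paper's justification is folded into the preamble of the corollary (noting via \cite[Remark~5.25]{lin2023differential} that $(\mathfrak{A}, [\ ,\ ]_\mathfrak{A}, \cdot_\mathfrak{A})$ is the induced Poisson algebra of $(\mathfrak{A}, \cdot_{\mathfrak{A}}, \Phi+\Psi^*)$ and that Eq.~\eqref{eq:vip} holds there), after which Theorem~\ref{thm:dasi2pb} applies and the cobracket computation in its proof identifies the induced $\delta$ with the coboundary $\delta_r$ of Theorem~\ref{thm:spfb}. Your additional verifications of commutativity, cocommutativity (which indeed reduces to the $L$-invariance of the symmetric part of $r$ supplied by Theorem~\ref{thm:adfd}), and factorizability are consistent with, and slightly more explicit than, what the paper records.
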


\begin{proposition}
	Let $(A, \cdot_{A}, \Delta_{r}, \Phi, \Psi)$ be a communicative and cocommutative factorizable differential ASI bialgebra, which corresponds to a symmetric Rota-Baxter differential Frobenius algebra $((A, \cdot_{A}, \Phi), \mathfrak{B}, P)$ of weight  $\lambda \neq 0$.
	Suppose that Eq.~\eqref{eq:vip} holds.
	Let $(A, [\ ,\ ]_{A}, \cdot_{A})$ be the induced Poisson algebra of $(A, \cdot_{A}, \Phi)$.
	Then $((A, [\ ,\ ]_{A}, \cdot_{A}), \mathfrak{B}, P)$ is a quadratic Rota-Baxter Poisson algebra of weight $\lambda$, to which the induced factorizable Poisson bialgebra $(A, [\ ,\ ]_{A}, \cdot_{A}, \delta_{r}, \Delta_{r})$ in Theorem~\ref{thm:dasi2pb} corresponds.
	In conclusion, we have the following commutative diagram
	\begin{displaymath}
		\xymatrix@C=2cm{ 
			(A, \cdot_A, \Delta_r, \Phi, \Psi) \ar@{->}[r]^{\mathrm{Thm.}~\ref{thm:dasi2pb}}\ar@{->}[d]^{\mathrm{Thm.}~\ref{thm:fdb2qrp}}&  (A, [\ ,\ ]_A, \cdot_A,  \delta_{r},\Delta_{r}) \ar@{->}[d]^{\mathrm{Thm.}~\ref{thm:fpb2qrp}} \\
			((A, \cdot_{A}, \Phi), \mathfrak{B}, P) \ar@<1.5ex>[u]^{\mathrm{Thm.}~\ref{thm:fdb2qrpx}} \ar@{->}[r] &((A, [\ ,\ ]_A, \cdot_A), \mathfrak{B}, P) \ar@<1.5ex>[u]^{\mathrm{Thm.}~\ref{thm:fpb2qrpx}}}
	\end{displaymath}
\end{proposition}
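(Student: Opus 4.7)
The plan is to establish the claim in two independent steps: first, I would verify that the triple $((A, [\ ,\ ]_A, \cdot_A), \mathfrak{B}, P)$ satisfies the definition of a quadratic Rota-Baxter Poisson algebra of weight $\lambda$; second, I would track the defining 2-tensor around the diagram to confirm that both routes produce the same factorizable Poisson bialgebra. Throughout I would use that the correspondence established by Theorems~\ref{thm:fdb2qrp} and \ref{thm:fdb2qrpx} forces $\Psi = \{\eth_1, \eth_2\}$ to be the adjoint $\hat{\Phi}$ of $\Phi = \{\partial_1, \partial_2\}$ with respect to $\mathfrak{B}$, so that $\mathfrak{B}(\partial_i(a), c) = \mathfrak{B}(a, \eth_i(c))$, and that the 2-tensor $r$ of $\Delta_r$ is precisely the 2-tensor form of $P \circ I_\mathfrak{B}$ given by Eq.~\eqref{eq:sxrbt2}.

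For the invariance of $\mathfrak{B}$ under the induced Lie bracket $[a,b]_A = \partial_1(a) \cdot_A \partial_2(b) - \partial_2(a) \cdot_A \partial_1(b)$, I would expand $\mathfrak{B}([a,b]_A, c)$ using the Frobenius invariance on $\cdot_A$ and the adjointness relation above, reducing to a sum of four terms of the form $\pm \mathfrak{B}(a, \eth_i(\partial_j(b) \cdot_A c))$. Applying the $\Psi$-admissibility Eqs.~\eqref{eq:qadm1}-\eqref{eq:qadm2} together with commutativity of $\cdot_A$, each such term rewrites as $\eth_i \partial_j(b) \cdot_A c - \partial_j(b) \cdot_A \partial_i(c)$. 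The identity Eq.~\eqref{eq:vip} then cancels the mixed double-derivative terms $\eth_2\partial_1(b) \cdot_A c = \eth_1\partial_2(b) \cdot_A c$, and the residual expression reassembles into $\mathfrak{B}(a, [b,c]_A)$.

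For the Rota-Baxter identity of $P$ on $[\ ,\ ]_A$, I would unfold
\begin{equation*}
[P(a), P(b)]_A = \partial_1 P(a) \cdot_A \partial_2 P(b) - \partial_2 P(a) \cdot_A \partial_1 P(b),
\end{equation*}
use that $P$ commutes with each $\partial_i$ to bring the derivations inside, and then invoke the Rota-Baxter identity for $(A, \cdot_A, P)$ on each associative product $P\partial_i(a) \cdot_A P\partial_j(b)$. Regrouping the resulting six terms yields $P\bigl([P(a), b]_A + [a, P(b)]_A + \lambda [a, b]_A\bigr)$. The compatibility Eq.~\eqref{eq:qrbp} transfers verbatim since it is a property of $\mathfrak{B}$ and $P$ alone, independent of which algebraic operation is considered.

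For the correspondence, the key observation is that both routes around the diagram produce a coboundary structure governed by the same $r \in A \otimes A$: by hypothesis, $r$ equals the 2-tensor form of $P \circ I_\mathfrak{B}$, and Theorem~\ref{thm:fpb2qrpx} applied to $((A, [\ ,\ ]_A, \cdot_A), \mathfrak{B}, P)$ uses precisely this same recipe to generate its factorizable Poisson bialgebra, which must therefore coincide with $(A, [\ ,\ ]_A, \cdot_A, \delta_r, \Delta_r)$ obtained from Theorem~\ref{thm:dasi2pb}. The main obstacle I anticipate is the bookkeeping of the invariance calculation for $\mathfrak{B}$, where Eq.~\eqref{eq:vip} must be invoked at precisely the right moment after the $\Psi$-admissibility expansion, since otherwise the mixed terms $\eth_i \partial_j(b) \cdot_A c$ do not cancel and the argument fails.
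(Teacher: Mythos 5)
Your proposal is correct and follows essentially the same route as the paper: the heart of both arguments is the verification that $\mathfrak{B}$ is invariant for the induced bracket $[\ ,\ ]_A$, obtained by combining Frobenius invariance with the adjointness $\mathfrak{B}(\partial_i(a),c)=\mathfrak{B}(a,\eth_i(c))$ coming from $\Psi=\hat{\Phi}$, then expanding via the $\Psi$-admissibility identities and cancelling the mixed terms with Eq.~\eqref{eq:vip}. You additionally spell out the Rota-Baxter identity for $P$ on the bracket (via $P\partial_i=\partial_i P$) and the agreement of the two routes through the common $2$-tensor $r$, both of which the paper leaves implicit, so your write-up is if anything slightly more complete.
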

\begin{proof}
	For all $a, b, c \in A$, we have 
	\begin{align*}
		\mathfrak{B}([a, b]_{A}, c) &= \mathfrak{B}(\partial_{1}(a) \cdot_{A} \partial_{2}(b) - \partial_{2}(a) \cdot_{A} \partial_{1}(b), c) = \mathfrak{B}(a, \eth_{1}(\partial_{2}(b) \cdot_{A} c) - \eth_{2}(\partial_{1}(b) \cdot_{A} c)) \\
		&=\mathfrak{B}(a, \eth_{1}(\partial_{2}(b)) \cdot_{A} c - \partial_{2}(b) \cdot_{A} \partial_{1}(c) - \eth_{2}(\partial_{1}(b)) \cdot_{A} c + \partial_{1}(b) \cdot_{A} \partial_{2}(c) ) \\
		&= \mathfrak{B}(a, [b, c]_{A})
	\end{align*} 
	and
	\begin{align*}
		[P(a), P(b)]_A &= \partial_{1}(P(a)) \cdot_{A} \partial_{2}(P(b)) - \partial_{2}(P(a)) \cdot_{A} \partial_{1}(P(b)) \\
		&= P(\partial_{1}(a)) \cdot_{A} P (\partial_{2}(b)) - P(\partial_{2}(a)) \cdot_{A} P (\partial_{1}(b)) \\
		&= P(P(\partial_{1}(a)) \cdot_{A} \partial_{2}(b) + \partial_{1}(a) \cdot_{A} P(\partial_{2}(b)) + \lambda \partial_{1}(a) \cdot_{A} \partial_{2}(b)) \\
		&\quad - P(P(\partial_{2}(a)) \cdot_{A} \partial_{1}(b) + \partial_{2}(a) \cdot_{A} P(\partial_{1}(b)) + \lambda \partial_{2}(a) \cdot_{A} \partial_{1}(b)) \\
		&= P(\partial_{1}(P(a)) \cdot_{A} \partial_{2}(b) + \partial_{1}(a) \cdot_{A} \partial_{2}(P(b)) + \lambda \partial_{1}(a) \cdot_{A} \partial_{2}(b)) \\
		&\quad - P(\partial_{2}(P(a)) \cdot_{A} \partial_{1}(b) + \partial_{2}(a) \cdot_{A} \partial_{1}(P(b)) + \lambda \partial_{2}(a) \cdot_{A} \partial_{1}(b)) \\
		&= P([P(a), b]_A + [a, P(b)]_A + \lambda [a, b]_A).
	\end{align*}
	Therefore, $((A, [\ ,\ ]_{A}, \cdot_{A}), \mathfrak{B}, P)$ is a quadratic Rota-Baxter Poisson algebra of weight $\lambda$.
	Thus we have shown that the proposition is true.
\end{proof}



\smallskip

\noindent
{\bf Data availability. } No new data were created or analyzed in this study.



\end{document}